\documentclass[12pt,reqno]{amsart}
\usepackage{amsmath,amssymb,amsthm,mathtools,enumitem}
\usepackage{url,needspace,fullpage}

\newtheorem{theorem}{Theorem}[section]
\newtheorem{definition}[theorem]{Definition}

\theoremstyle{plain}
\newtheorem{lemma}[theorem]{Lemma}
\newtheorem{corollary}[theorem]{Corollary}
\newtheorem{proposition}[theorem]{Proposition}

\usepackage[backend=biber,style=numeric,citestyle=numeric-comp,
            giveninits=true,maxbibnames=99,isbn=false]{biblatex}
\DeclareFieldFormat{labelnumberwidth}{\mkbibbrackets{#1}}
\DeclareNameAlias{author}{family-given}
\DeclareNameAlias{editor}{family-given}

\DeclareDelimFormat{namedelim}{\addcomma\space}
\DeclareFieldFormat*{title}{#1}
\DeclareFieldFormat{journaltitle}{\mkbibemph{#1}}
\DeclareFieldFormat[article]{volume}{\textbf{#1}}
\DeclareFieldFormat[article]{number}{\textbf{#1}}
\DeclareFieldFormat{pages}{#1}
\renewbibmacro{in:}{}
\renewbibmacro*{volume+number+eid}{%
  \printfield{volume}%
  \iffieldundef{number}{}{\printtext[parens]{\printfield{number}}}%
  \setunit{\addcomma\space}%
  \printfield{eid}}
\AtEveryBibitem{%
  \clearfield{issn}%
  \clearfield{url}%
  \clearfield{eprint}}
\AtBeginBibliography{\emergencystretch=1em}

\usepackage[colorlinks=true,
            linkcolor=blue,
            citecolor=blue,
            urlcolor=blue,
            bookmarks=true,
            breaklinks=true]{hyperref}
\hypersetup{pdftitle={Sharp singularity asymptotic for discrete random matrices},
            pdfauthor={Xuanang Hu, Zeyan Song, Xinglong Wu}}

\numberwithin{equation}{section}
\allowdisplaybreaks[4]

\newcommand{\R}{\mathbb R}
\newcommand{\Z}{\mathbb Z}
\newcommand{\T}{\mathbb T}
\newcommand{\PP}{\mathbb P}
\newcommand{\EE}{\mathbb E}
\newcommand{\one}{\mathbf 1}
\newcommand{\spn}{\operatorname{span}}
\newcommand{\rank}{\operatorname{rank}}
\newcommand{\dist}{\operatorname{dist}}
\newcommand{\vol}{\operatorname{vol}}
\newcommand{\Comp}{\operatorname{Comp}}
\newcommand{\Incomp}{\operatorname{Incomp}}

\newcommand{\Lc}{\mathcal L}
\newcommand{\se}[1]{\mathcal S(#1)}
\newcommand{\norm}[1]{\lVert#1\rVert}

\title[Sharp singularity asymptotic]{Sharp singularity asymptotics for discrete random matrices}

\author{Xuanang Hu}
\address{Shandong University, Jinan, China.}
\email{xuananghu7@gmail.com}

\author{Zeyan Song}
\address{Shandong University, Jinan, 250100, China.}
\email{zeyansong8@gmail.com}

\author{Xinglong Wu}
\address{Shandong University, Jinan, China.}
\email{wuxinglong3@gmail.com}

\date{}
\subjclass[2020]{60B20, 60C05, 15B52}
\keywords{Random matrices, smallest singular value, singularity probability,
finite support, Littlewood--Offord theory}

\begin{document}

\begin{abstract}
We resolve the Rademacher singularity conjecture: an $n\times n$
matrix with independent uniform $\{-1,1\}$ entries is singular with
probability $(2+o(1))n^22^{-n}$.
More generally, for an $n\times n$ matrix $M_n$ with independent
entries uniform on a fixed set $S\subset\R$ of cardinality $q\ge2$,
we prove
\begin{align*}
\PP\left(s_n(M_n)\le z/\sqrt n\right)
 \le Cz+a_n(S)+C'n^{1+\varepsilon}q^{-n}
\end{align*}
for all $n\ge1$, $z\ge0$, and $\varepsilon>0$, with $C=C(S)$
and $C'=C'(S,\varepsilon)$.
Here $s_n$ denotes the least singular value, and $a_n(S)$ sums the probabilities of
zero rows or columns and equal or opposite pairs of rows or columns.
The proof combines inversion of randomness with Fourier averaging
at scales within a polynomial factor of $q^n$.
\end{abstract}
\maketitle

\section{Introduction}

Let $B_n$ be an $n\times n$ random matrix whose entries are independent
and uniform on $\{-1,1\}$. A natural question is to determine the
probability that $B_n$ is singular. Two equal or opposite rows, or
two equal or opposite columns, force singularity. The probability
of their union is $(2+o(1))n^22^{-n}$, and it has long been conjectured
that these elementary dependencies give the full asymptotic:
\begin{align*}
\PP\left(B_n\text{ is singular}\right)=(2+o(1))n^22^{-n}.
\end{align*}
See Conjecture~1.1 in \cite{TV}, Conjecture~2.2 in
\cite{VuSurvey}, and equation~(4.1) in \cite{SahasrabudheICM}.

Koml\'os \cite{Komlos} proved that the singularity probability tends
to zero, and Kahn, Koml\'os and Szemer\'edi \cite{KKS} obtained the
first exponential upper bound. Subsequent work introduced several
methods that now play a central role in the subject. Tao and Vu
\cite{TVinv} developed inverse Littlewood--Offord theory, relating
large concentration probabilities to additive structure. Sharper
structural descriptions were obtained in \cite{TVsharp,NV};
see \cite{NVsurvey} for a survey of the theory and its applications.
A geometric approach to invertibility through small ball
estimates and the decomposition of the unit sphere into
compressible and incompressible vectors was developed by
Rudelson \cite{Rudelson} and by Rudelson and Vershynin \cite{RV}.
The exponential bound for singularity was improved by Tao and Vu
\cite{TV06,TV} and Bourgain, Vu and Wood \cite{BVW}. Tikhomirov
\cite{Tik} introduced inversion of randomness and proved the
optimal exponential estimate
\begin{align*}
\PP\left(B_n\text{ is singular}\right)=\left(\frac{1}{2}+o_{n}(1)\right)^n.
\end{align*}

Jain, Sah and Sawhney \cite{JSS} determined the singularity
asymptotic for every fixed nonconstant finitely supported real
distribution that is not uniform on its support. They showed that
zero rows or columns and equal or opposite pairs of rows or
columns give the leading contribution. Their work also gives
the optimal exponential rate for uniform finite support laws.
For sparse Bernoulli matrices, Litvak and Tikhomirov
\cite{LitvakTikhomirovSparse} and Huang \cite{HuangSparse}
obtained sharp results in regimes where the Bernoulli parameter may depend on $n$. Song \cite{SongBiased} treated highly biased discrete distributions. The precise singularity asymptotic for fixed uniform finite support laws, including the Rademacher law,
remained open.

We establish this asymptotic for every uniform finite support
law, with an error smaller than the main term by a polynomial
factor. Our result also controls the lower tail of the least singular value.
Let $\xi$ be uniform on a fixed finite set $S\subset\R$ with
$q=|S|\ge2$, and let $M_n=M_n(\xi)$ have independent entries
distributed as $\xi$. The entries are not normalized. Write
\begin{align*}
s_n(M_n)=\inf_{x\in\mathbb S^{n-1}}\norm{M_nx}_2.
\end{align*}
For an independent copy $\xi'$ of $\xi$, define
\begin{align}
a_n(S)=2n\PP\left(\xi=0\right)^n
 +n(n-1)\left(q^{-n}+\PP\left(\xi=-\xi'\right)^n\right).
 \label{eq:an}
\end{align}
This is the sum of the probabilities of the elementary dependencies
described above, since $\PP\left(\xi=\xi'\right)=q^{-1}$. In particular,
$a_n(S)=\Theta_S(n^2q^{-n})$.

\begin{theorem}\label{thm:main}
Let $S\subset\R$ be a fixed set of $q\ge2$ points. There is a
constant $C_{\ref{thm:main}}>0$ depending only on $S$ such that
for every $\varepsilon>0$ there is a constant
$C'_{\ref{thm:main}}>0$ depending only on $S,\varepsilon$ for which
\begin{align}
\PP\left(s_n(M_n)\le z/\sqrt n\right)
 \le C_{\ref{thm:main}}z+a_n(S)
       +C'_{\ref{thm:main}}n^{1+\varepsilon}q^{-n}
 \label{eq:main}
\end{align}
for every $n\ge1$ and $z\ge0$.
\end{theorem}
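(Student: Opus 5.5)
The plan follows the Jain--Sah--Sawhney framework built on Tikhomirov's inversion of randomness. The new ingredient is a sharper treatment of the threshold-level structured vectors, so that every error term is $O(n^{1+\varepsilon}q^{-n})$ rather than merely exponentially small.

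\textbf{Step 1: reduction to distances.} We split the sphere into compressible vectors, non-almost-constant incompressible vectors, and a thin class of vectors that are close to sparse or to "two-coordinate" vectors. For compressible vectors, the standard net argument gives a bound $e^{-cn}$. This is too weak, so we refine it. A vector close to a $k$-sparse vector with $k$ bounded produces an exact dependency among $k$ columns. For $k=1$ this is a zero column, and for $k=2$ it is an equal or opposite pair; both are absorbed into $a_n(S)$, with the row case handled by transposing. For $3\le k\le K$, a direct count gives probability $O(n^kq^{-\gamma_k n})$ with $\gamma_k>1$, because the set of $k$-tuples of columns satisfying a fixed nontrivial linear relation has probability at most $q^{-n}\cdot\rho^n$ with $\rho<1$ unless the relation is of the elementary type. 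The Odlyzko-type bound $\max_{x}\PP(\sum_{i\le k} a_i\xi_i = x)<q^{-1}$ for $k\ge3$ nonzero coefficients supplies this. For $k$ between $K$ and $cn$, the Littlewood--Offord concentration is at most $k^{-1/2}$ per row, which is far below $q^{-1}$ once $K$ is large. For incompressible vectors, the Rudelson--Vershynin negative second moment identity reduces the problem to
\[
\PP\left(s_n\le z/\sqrt n\right)\le Cz+\text{(compressible)}+\frac{1}{\delta n}\sum_i \PP\left(\dist(C_i,H_i)\le z/(\delta)\right),
\]
where $H_i$ is the span of the other columns. The distance is then controlled by $|\langle C_i,v\rangle|$, with $v$ a unit normal to $H_i$.

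\textbf{Step 2: structure of the normal.} We need
\[
\PP\bigl(\rho_{L}(v)\ge n^{\varepsilon}q^{-n}\bigr)\lesssim q^{-n}n^{\varepsilon},
\]
where $\rho_L$ is the Lévy concentration at scale matching $z$. We establish this with inversion of randomness. Fix a level $t$ with $q^{-n}n^{\varepsilon}\le t\le e^{-cn}$ and consider the set $\mathcal V_t$ of vectors whose threshold (LCD) is about $1/t$. We build a net for $\mathcal V_t$ of cardinality at most $(C/ (t\sqrt n))^{n}\cdot q^{-\text{(small)}}$ using Tikhomirov's averaging over random lattice approximations. The union bound $|\mathcal N_t|\cdot (\sup_{u}\PP(\|M u\|\ small))$ then needs to close with only a polynomial loss. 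This is where Fourier averaging at scales within $\mathrm{poly}(n)\,q^n$ enters. For the uniform law on $S$, $|\phi_\xi(\theta)|=1$ only on a lattice, and the deficiency $1-|\phi_\xi|^2$ is quadratic near those points. Averaging the Esseen bound $\int\prod_j|\phi_\xi(v_j\theta)|\,d\theta$ over a box of size $T\approx n^{O(1)}q^{n}$ shows that for typical $v$ in the random net, the product exceeds $q^{-n}$ only by a factor $n^{\varepsilon}$. The extremal vectors, those with $\rho(v)\approx q^{-n}$ and a large measure on nearly integral directions, are exactly $e_i$ and $e_i\pm e_j$. These we already removed.

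\textbf{Step 3: assembly and main obstacle.} Combine as follows. $\PP(\dist\le z/\delta)\le C z+\rho(v)$ on the good event, and the bad event costs $O(n^{\varepsilon}q^{-n})$ per column. Summing over $n$ columns with the $1/(\delta n)$ prefactor, then adding the column-dependency and row-dependency events (using the transpose for rows, which have the same law), gives $Cz+a_n(S)+C'n^{1+\varepsilon}q^{-n}$. The inclusion--exclusion overlap between the elementary events and the rest is only an upper-bound issue, so it is harmless.

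The main obstacle is Step 2 at levels $t$ just above $q^{-n}$. There Tikhomirov's and JSS's arguments lose a factor $(1+\eta)^n$. We must replace the $\varepsilon$-net counting with an anti-concentration count that is sharp up to $n^{\varepsilon}$. The first thing to try is a dyadic decomposition in $t$ with a second-moment (Halász-type) estimate on the number of lattice points $\theta\in[-T,T]^n$ where $\prod|\phi_\xi(v_j\theta)|\ge q^{-n}n^{\varepsilon}$, combined with the rigidity that only sparse near-integer vectors attain this.
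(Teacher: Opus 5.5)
Your overall architecture (structured vectors, Tikhomirov's inversion of randomness, a Fourier averaging estimate at scales polynomially close to $q^n$, column averaging) matches the paper's. However, Step~2 carries the entire content of the theorem, and it is asserted rather than proved. The tools you name also cannot prove it.

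At scale $T\asymp1/N$, inversion of randomness takes a union bound over $(CN)^n$ rounded vectors against a tensorized small ball bound $(CT)^{n-1}$. So you need the proportion of rounded vectors with concentration at least $L/N$ to be $e^{-Mn}$ for arbitrarily large $M$, with $L$ independent of $M$, uniformly in $N\le q^n(n+1)^{-D}$. This is Theorem~\ref{thm:averaging}.

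An Esseen or Hal\'asz bound with absolute values inside cannot give this near $N\approx q^n$. At a far frequency, the coefficient average of $\prod_i|\varphi(z_i\theta)|$ is $\prod_i\EE|\varphi(z_i\theta)|$, which is about $(2/\pi)^n\gg2^{-n}$ for Rademacher entries. Bounds through $|\Phi|^2$ (collision probabilities) cannot go below $q^{-n/2}$. The phases must be kept.

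The paper bounds the far-frequency part $R_Z$ through its $k$th spatial moment, taken before absolute values. This retains the constraint $\theta_1+\cdots+\theta_k=0$ and reduces matters to integrating $\prod_i|K_i(\theta)|$ over $H_k$. The moment normalization and the density majorant leave a factor $(2q^2)^n$ to be beaten, so the order must be $k\asymp n/\log n$. Frequencies are then classified by the dimension $r=k-d$ and the density loss $t$ of their approximate relation set $V(\theta)\subset E^k$.

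For bounded $t$, the relations span an exact space containing no coordinate vector; this is where $|\theta_j|>n^2/N$ enters. Hence $d\ge k/2-O(1)$ by \eqref{eq:codim}. By \eqref{eq:refinedcount} there are at most $\exp\{d\log(k+1)+O(k)\}$ such spaces, with leading coefficient exactly one. Against the resulting factor $(N/q^n)^d$ this gives $-(D-1)d\log n\approx-A(D-1)n/2$.

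Large $t$ requires the weighted cover by relation slabs and the Jacobian control of Proposition~\ref{prop:approxcount} and Lemmas~\ref{lem:relations} and~\ref{lem:densebases}. This is needed because bases from $E^k$ for real $E$ admit no usable determinant lower bound. Your proposal contains neither this mechanism nor a substitute for it.

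There are two further points. First, the loss $n^{1+\varepsilon}$ comes exactly from needing $D>1$ when $d\log(k+1)$ is compared with $dD\log(n+1)$. Your per-column target $\PP(\rho(v)\ge n^\varepsilon q^{-n})\lesssim n^\varepsilon q^{-n}$ corresponds, for small $\varepsilon$, to $D=\varepsilon<1$, which that balance does not reach. Your Step~3 arithmetic does not match it anyway: $n$ such terms with prefactor $1/(\delta n)$ give $O(n^\varepsilon q^{-n})$. Relatedly, $e_i$ and $e_i\pm e_j$ are not vectors with $\rho(v)\approx q^{-n}$. Their concentration is about $1/q$; what is of order $q^{-n}$ is the probability that they occur as normals.

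Second, Step~1 fails as written. A vector in $\Comp(\delta,\nu)$ is only within a fixed distance $\nu$ of a sparse $y$. So $\norm{M_nx}_2\le z/\sqrt n$ yields no exact dependency among columns, since $\norm{M_n(x-y)}_2$ may be of order $\nu\sqrt n$, or $\nu n$ when $\EE\xi\ne0$. For $S=\{0,1\}$ one has $\norm{M_n}\asymp n$, which is why the paper centers $M_{-j}$ and controls $\langle\one_n,e\rangle$ in the rounding. Your point-mass bound for three or more nonzero coefficients is correct, but it only handles exact relations at $z=0$. The near-sparse case, with leading term $\tfrac12a_n(S)$ and an exponentially smaller error, is the Jain--Sah--Sawhney estimate, which the paper imports as Lemma~\ref{lem:structured} for $M_n$ and $M_n^T$.
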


Taking $z=0$ gives the following consequence.

\begin{corollary}\label{cor:singularity}
For every fixed $0<\varepsilon<1$,
\begin{align*}
\PP\left(M_n\text{ is singular}\right)
 =a_n(S)+O_{S,\varepsilon}(n^{1+\varepsilon}q^{-n})
 =(1+o(1))a_n(S).
\end{align*}
In particular, for independent uniform $\{-1,1\}$ entries,
\begin{align*}
\PP\left(B_n\text{ is singular}\right)
 =(2+o_{n}(1))n^{2}2^{-n}.
\end{align*}
For independent uniform $\{0,1\}$ entries, the corresponding formula is
\begin{align*}
\PP\left(M_n\text{ is singular}\right)
 =(1+o_{n}(1))n^{2}2^{-n}.
\end{align*}
\end{corollary}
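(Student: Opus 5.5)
The upper bound follows from Theorem~\ref{thm:main} with $z=0$: singularity means $s_n(M_n)=0\le 0/\sqrt n$, so
\begin{align*}
\PP\left(M_n\text{ is singular}\right)\le a_n(S)+C'_{\ref{thm:main}}n^{1+\varepsilon}q^{-n}.
\end{align*}
So the plan is to prove a matching lower bound $\PP(M_n\text{ singular})\ge a_n(S)-O_S(n^{1+\varepsilon}q^{-n})$. Then I will deduce $(1+o(1))a_n(S)$ from $a_n(S)=\Theta_S(n^2q^{-n})$ and $\varepsilon<1$, and finish by evaluating $a_n(S)$ in the two examples.

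For the lower bound I will write the singular event as containing the union of the elementary events. These are $Z^r_i,Z^c_i$ (row/column $i$ is zero), $E^r_{ij},E^c_{ij}$ (rows/columns $i<j$ equal), and $O^r_{ij},O^c_{ij}$ (opposite). Their individual probabilities sum exactly to $a_n(S)$, as remarked after \eqref{eq:an}. By Bonferroni, $\PP(\bigcup)\ge a_n(S)-\sum\PP(A\cap A')$ over unordered pairs of distinct events. The key observation is that two distinct elementary events impose constraints that are nearly independent. Two row events touching disjoint pairs of rows are independent, each of probability $O(q^{-n})$, since $\PP(\xi=0),\PP(\xi=-\xi')\le q^{-1}$. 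There are $O(n^4)$ such pairs, so this crude bound is too weak, and I must be careful.

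I therefore expect the main obstacle to be controlling $\sum\PP(A\cap A')$ at the level $O(n^{1+\varepsilon}q^{-n})$ despite $n^4$ pairs. The point is that the intersection costs about $q^{-2n}$, not $q^{-n}$. I will split into cases:
\begin{itemize}[leftmargin=*]
\item Row--row pairs on disjoint index sets have probability at most $q^{-2n}$, totalling $O(n^4q^{-2n})$.
\item Row--row pairs sharing an index, such as $E^r_{12}\cap E^r_{13}$ or $E^r_{12}\cap O^r_{13}$, force the three rows to be determined by one of them. This costs at most $q^{-2n}$, with $O(n^3)$ such pairs.
\item A row event with a column event, for example rows $i,j$ equal and columns $k,l$ equal, fixes roughly $n$ entries in rows and $n$ in columns, with overlap $O(1)$. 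By independence of entries outside the overlap, the probability is at most $q^{-2n+O(1)}$. The number of such pairs is $O(n^4)$.
\end{itemize}
In every case the total is $O_S(n^4q^{-2n})=o(n^{1+\varepsilon}q^{-n})$, which gives the lower bound. A degenerate event, for instance opposite rows when $S$ is not symmetric, simply has probability zero and causes no harm. For the row--column case I will argue it concretely: condition on all entries outside rows $i,j$ and columns $k,l$. Then the row constraint costs $q^{-(n-2)}$ on the remaining columns, and the column constraint costs $q^{-(n-2)}$ independently on the remaining rows.

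Finally I will compute the two examples. For $S=\{-1,1\}$ we have $q=2$, $\PP(\xi=0)=0$ and $\PP(\xi=-\xi')=1/2$, so $a_n=2n(n-1)2^{-n}=(2+o(1))n^22^{-n}$. For $S=\{0,1\}$, $\PP(\xi=0)=1/2$ and $\PP(\xi=-\xi')=\PP(\xi=\xi'=0)=1/4$, so
\begin{align*}
a_n=2n2^{-n}+n(n-1)(2^{-n}+4^{-n})=(1+o(1))n^22^{-n}.
\end{align*}
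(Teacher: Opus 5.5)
Your proposal is correct and follows the paper's argument. The upper bound is Theorem~\ref{thm:main} at $z=0$, and the lower bound is Bonferroni over the elementary events with each pairwise intersection of probability $O(q^{-2n})$, as in Lemma~\ref{lem:elementary}, giving an $O(n^4q^{-2n})$ error that is absorbed into $n^{1+\varepsilon}q^{-n}$. The only difference is that the paper bounds all intersections at once, noting that the combined linear constraints have rank $2n$ or $2n-1$ and applying \eqref{eq:projectioncount}, whereas you use a case analysis based on independence of disjoint sets of entries. That case list should also include pairs on the same rows, such as $E^r_{12}\cap O^r_{12}$ or $Z^r_1\cap E^r_{12}$, which force both rows to vanish and so cost at most $\PP(\xi=0)^{2n}\le q^{-2n}$.
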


Thus Corollary~\ref{cor:singularity} proves the Rademacher
singularity conjecture stated as Conjecture~1.1 in \cite{TV}.
Together with the result of Jain, Sah and Sawhney \cite{JSS}
for nonuniform laws, it establishes the finite support
singularity conjecture for every fixed nonconstant finitely
supported real distribution.

To deduce the corollary, let $\mathcal E_n$ be the union of the
elementary events entering \eqref{eq:an}. Lemma~\ref{lem:elementary} gives
\begin{align*}
0\le a_n(S)-\PP\left(\mathcal E_n\right)\le Cn^4q^{-2n}.
\end{align*}
Since $\mathcal E_n$ implies singularity, this lower bound and
\eqref{eq:main} at $z=0$ prove the corollary.
For $S=\{0,1\}$, the additional term from opposite pairs
$n(n-1)4^{-n}$ is absorbed in the stated error.

Edelman \cite{Edelman} identified $n^{-1/2}$ as the natural scale
of the least singular value for matrices with independent standard
Gaussian entries. Following work of Rudelson \cite{Rudelson},
Rudelson and Vershynin \cite{RV,RVrect} established the same scale
for independent identically distributed subgaussian entries with mean zero
and variance one,
with a lower tail bound of the form $C\varepsilon+e^{-cn}$ at
radius $\varepsilon n^{-1/2}$. In recent work, Sah,
Sahasrabudhe and Sawhney \cite{SSS} refined the coefficient of the
linear term to $1+o(1)$.

For the uniform finite supports considered here,
Theorem~\ref{thm:main} has the natural linear dependence on $z$, with
a constant $C_S$ depending only on $S$, and identifies the
singularity contribution asymptotically as $(1+o(1))a_n(S)$.

The main new ingredient is a Fourier averaging estimate in the
inversion of randomness argument, valid at coefficient scales
within a polynomial factor of $q^n$. We take moments whose order
grows with $n$, average over the coefficients, and classify the
resulting sets of approximate linear relations by dimension and
density. The estimate balances the decay of the integrand against
the volume of the corresponding frequency regions.

For arbitrary real supports, a further difficulty at this polynomial scale is that bases
drawn from finite product sets need not admit useful determinant
bounds as the dimension grows. We obtain a count of possible
output labels that is independent of the inverse norm of the
interpolation basis, using a recursive reduction that retains
the original input labels. We also construct many bases with
well conditioned initial segments, so that the number of
completion directions contributing a loss at the localization
scale is controlled by the density loss. Averaging over these
bases yields a weighted cover of the frequency regions. The
resulting counting and inverse Jacobian costs are absorbed by
the additional decay supplied by the density loss.

We discuss this difficulty and our treatment of it in
Section~\ref{sec:realsupports}. For algebraic supports, a lower bound
on nonzero determinants gives a simpler approach to this part of
the argument. Appendix~\ref{sec:algebraic} gives this bound and its
proof; it is not used in the proof of the main theorem.

Section~\ref{sec:overview} explains why polynomial precision is
required and describes the counting and geometric estimates
underlying the argument.

\subsection{Notation}

All spans are over $\R$ and all logarithms are natural.
We write $[m]=\{1,\ldots,m\}$, $\one_m=(1,\ldots,1)$,
and $t_+=\max\{t,0\}$ for $t\in\R$.
The vectors $e_i$ are the standard coordinate vectors.
For $x\in\R^m$, we use
\begin{align*}
\norm{x}_1=\sum_{i=1}^m|x_i|,
\qquad
\norm{x}_2=\left(\sum_{i=1}^m x_i^2\right)^{1/2},
\qquad
\norm{x}_\infty=\max_{i\in[m]}|x_i|.
\end{align*}
We write $\langle x,y\rangle=\sum_i x_i y_i$ for the
Euclidean inner product and
$\mathbb S^{m-1}=\{x\in\R^m:\norm{x}_2=1\}$ for the
Euclidean unit sphere.
For $I\subset[m]$, the vector $x_I$ consists of the
coordinates indexed by $I$, and $I^c=[m]\setminus I$.

For a nonempty set $A\subset\R^m$, define
\begin{align*}
\dist(x,A)=\inf_{a\in A}\norm{x-a}_2,
\qquad
\dist_\infty(x,A)=\inf_{a\in A}\norm{x-a}_\infty.
\end{align*}
In particular, for a nonempty finite set $E\subset\R$,
\begin{align*}
\dist_\infty(x,E^m)
 =\max_{i\in[m]}\min_{a\in E}|x_i-a|.
\end{align*}
We write $\T=\R/\Z$ and
$\norm{t}_{\T}=\dist(t,\Z)=\min_{j\in\Z}|t-j|$ for $t\in\R$.
For a linear subspace $H$, let $P_H$ denote the
orthogonal projection onto $H$.

Matrix norms without a subscript are Euclidean operator norms.
For $B=(B_{ij})\in\R^{m\times p}$, we write
\begin{align*}
\norm{B}_{\mathrm{HS}}
 =\left(\sum_{i=1}^m\sum_{j=1}^p B_{ij}^2\right)^{1/2},
\qquad
\norm{B}_{2\to\infty}
 =\max_{i\in[m]}\left(\sum_{j=1}^p B_{ij}^2\right)^{1/2}.
\end{align*}
Thus $\norm{B}_{2\to\infty}$ is the largest Euclidean row norm.
The notation $B_{I,J}$ denotes the submatrix with rows in $I$
and columns in $J$; $B_{i\cdot}$ and $B_{\cdot j}$ denote
the $i$th row and the $j$th column, respectively.
We write $B^T$ for the transpose of $B$ and $I_m$ for the
$m\times m$ identity matrix, omitting the subscript when
the dimension is clear.
For symmetric matrices $A,B$ of the same size,
$A\preceq B$ means that $B-A$ is positive semidefinite.
For a matrix $B$ with full column rank, define
$\vol(B)=\det(B^TB)^{1/2}$; the empty frame has volume one.
For $B\in\R^{m\times p}$ with $p\ge1$, define
\begin{align*}
s_{\min}(B)=\inf_{x\in\mathbb S^{p-1}}\norm{Bx}_2.
\end{align*}
Thus $s_{\min}$ is taken on the column domain.
For an $n\times n$ matrix, we also write $s_n(B)=s_{\min}(B)$.

For a real random variable $X$ and $h\ge0$, its concentration
function is
\begin{align*}
\Lc(X,h)=\sup_{z\in\R}\PP\left(|X-z|\le h\right).
\end{align*}
Subscripts on $\PP$, $\EE$ and $\Lc$ specify the random
variables over which probability, expectation or concentration
is taken, with the remaining quantities held fixed.
For an event $\mathcal E$, we write $\one_{\mathcal E}$ for
its indicator.
For a bounded function $f$, we write
$\norm{f}_\infty=\sup_x|f(x)|$.

The letters $c,C$ denote positive constants whose values may change
from line to line. Their dependence on fixed parameters is specified
in each statement. Constants used in later arguments are indexed by
the number of the theorem, proposition or lemma in which they are
introduced. Dimension thresholds are indexed in the same way,
with their parameter dependence displayed. Dependence only on $q$
is distinguished from dependence on the set $E$ itself.
For $g>0$, we write $f=O(g)$ if $|f|\le Cg$ and $f=\Omega(g)$
if $f\ge cg$. For nonnegative $f$, the notation $f=\Theta(g)$
means that both bounds hold. Dependence of the implied constants
on fixed parameters is indicated by subscripts or stated explicitly.
We write $f=o(g)$ if $f/g\to0$ in the specified limit. Limits
involving $n$ are taken as $n\to\infty$, with the support and
any parameters declared fixed held constant.

\subsection{Organization}

Section~\ref{sec:overview} explains the proof and the difficulties
arising from uniform distributions and arbitrary real supports.
Section~\ref{sec:count} proves the counting and geometric estimates.
Section~\ref{sec:averaging} establishes the Fourier averaging theorem.
Section~\ref{sec:normals} applies it to random normal vectors and
completes the proof of the main theorem, using the estimate of
Jain, Sah and Sawhney \cite{JSS} for structured vectors.
Appendix~\ref{sec:algebraic} records the determinant estimate
available for algebraic supports.

\section{Outline of the proof}\label{sec:overview}

The main new ingredient is an averaging estimate that refines
Tikhomirov's inversion of randomness at polynomial precision.
We first explain the precision required by the reduction and the
obstruction in the uniform case, and then describe the additional
difficulty for arbitrary real supports.

We use Tikhomirov's reduction by inversion of randomness;
see Sections~2, 4 and 5 in \cite{Tik}. At a concentration scale $T$, an
incompressible normal is rescaled and rounded to an integer vector in
an admissible product set at scale $N=\Theta(T^{-1})$. The required
counting estimate is therefore a bound for coefficient vectors whose
random linear forms have large concentration. The critical point is
that the estimate must hold in the range
\begin{align*}
1\le N\le q^n(n+1)^{-D},\qquad D>1,
\end{align*}
for every fixed $D>1$. A range separated from $q^n$ by a fixed
exponential factor gives the sharp exponential rate, but not the
polynomial precision in Theorem~\ref{thm:main}.

In this section, $E$ denotes a translate of $S$ containing zero.
For fixed coefficients, translating the support only translates the
linear form and does not change its concentration function.

\subsection{Inversion of randomness at polynomial scales}
Proposition~4.5 and Corollary~4.3 in \cite{Tik} use repeated random averaging to control the concentration of random sums whose coefficients are discretized candidate normal vectors. At each step, the concentration bound is updated after averaging over one more random coefficient. For the Bernoulli$(1/2)$ coefficient law used in that reduction,
the averaging operators have the form
\begin{align*}
g_j(t)=\frac12g_{j-1}(t)+\frac12g_{j-1}(t+X_j).
\end{align*}
A bound with a fixed loss $\eta>0$ at each averaging step produces
\begin{align*}
\left(\frac12+\eta\right)^j=2^{-j}(1+2\eta)^j.
\end{align*}
For fixed $\eta$ this has an exponential cost. Taking the auxiliary
loss arbitrarily small gives a sharp exponential rate, but supplies no
uniform estimate when that loss decreases with $n$. At the scale of $n^22^{-n}$ the accumulated loss must instead be
polynomial, requiring a refinement of this estimate with a fixed loss.
The comparison used by Jain, Sah and Sawhney \cite{JSS} relies on a strict
inequality between the exponential scale of the estimate for unstructured vectors and the probability that two entries agree. If $p_a=\PP\left(\xi=a\right)$ and
$H(\xi)=-\sum_a p_a\log p_a$, then
\begin{align*}
e^{-H(\xi)}\le \sum_a p_a^2,
\end{align*}
with equality precisely when $\xi$ is uniform on its support.
For a nonuniform law, the strict inequality allows the unstructured
contribution to be exponentially smaller than the elementary
contribution from pairs of rows. For a uniform law both bases equal $q^{-1}$,
so this comparison does not yield the required smaller remainder.

We retain the discretization by product sets from \cite{Tik} and use a
Fourier averaging estimate at the required polynomial scale.
The underlying small ball problem goes back to Littlewood and
Offord \cite{LO} and Erd\H{o}s \cite{Erdos}. Fourier estimates
for concentration functions appear in the work of Esseen
\cite{Esseen} and Hal\'asz \cite{Halasz}; Friedland and Sodin
\cite{FS} give an analytic approach relating small ball bounds to
Diophantine approximation.

In our argument, Fourier analysis is combined with averaging over the random coefficients to show that, for a coefficient vector chosen from an admissible product set at scale $N$, the probability that the corresponding random sum has concentration larger than $L/N$ is at most $e^{-Mn}$. Here $L$ is independent of $M$ and $D$. For every fixed $D>1$ and arbitrarily large $M\ge1$, the estimate holds uniformly over $N\le q^n(n+1)^{-D}$ once $n$ is sufficiently large. We first reduce this problem to bounding the supremum of a smoothed density $F_Z$, where $Z$ is a continuous perturbation of the original coefficient vector.

Fourier inversion expresses $F_Z$ as an integral involving
\begin{align*}
\varphi(t)=q^{-1}\sum_{a\in E}e^{2\pi iat},
\qquad
\Phi_Z(\theta)=\prod_{i=1}^n\varphi(Z_i\theta).
\end{align*}
We split this integral into low, intermediate, and far frequencies. The low-frequency contribution is bounded by $C/(N\sqrt n)$ directly from admissibility. For intermediate frequencies, a fixed moment estimate, followed by averaging over $Z$, gives an exceptional probability at most $e^{-Mn}$. It remains to prove a comparable bound for the far-frequency contribution $R_Z$:
\begin{align*}
\mathbb P_Z\!\left(\|R_Z\|_\infty>\frac{1}{N\sqrt n}\right)
\le e^{-Mn}.
\end{align*}
This is the main part of the argument. We first apply Fourier inversion, then take a high even moment, and finally average over the random coefficients. More precisely, we bound the supremum of $R_Z$ by its spatial moment of even order $k=\Theta(n/\log n)$. Integrating in the spatial variable imposes the constraint
\begin{align*}
\theta_1+\cdots+\theta_k=0,
\end{align*}
so the resulting Fourier integral is over
\begin{align*}
H_k=\{\theta\in\mathbb R^k:\theta_1+\cdots+\theta_k=0\}.
\end{align*}
After replacing the coefficient densities by suitable smooth majorants, independence allows us to average each coefficient separately. The resulting factors have the form
\begin{align*}
K_i(\theta)
=
q^{-k}\sum_{v\in E^k}
\widehat{\nu}_i(-\langle v,\theta\rangle).
\end{align*}
The Fourier transforms $\widehat{\nu}_i$ decay away from zero,
so the kernel bounds depend on how many vectors $v\in E^k$
make $\langle v,\theta\rangle$ small. For a fixed frequency
vector $\theta$, we call $v\in E^k$ an approximate relation
vector if
\begin{align*}
|\langle v,\theta\rangle|\le R/N,
\end{align*}
where $R$ is a parameter chosen later. Thus the number of
approximate relations means the number of such vectors $v$,
not the number of linearly independent relations. This converts
the moment estimate into a problem about the number and
structure of approximate relation vectors. We write $V(\theta)$
for this set.

There are two sources of smallness in this integral. If few approximate relations occur, the kernel factors are small, giving a good bound. If many relations occur, they restrict the frequencies to a small region. To use both effects, we group frequencies according to the dimension of a subspace close to $V(\theta)$ and the number of these vectors. Write $r$ for this dimension and $d=k-r$. The construction in Lemma~\ref{lem:relations} ensures that $|V(\theta)|\le q^r$. We define the integer density loss $t$ by
\begin{align*}
q^{-t-1}<\frac{|V(\theta)|}{q^r}\le q^{-t}.
\end{align*}
The counting estimates of Section~\ref{sec:count} are applied in
Section~\ref{sec:averaging} with $u=t+1$, since
$|V(\theta)|>q^{r-u}$.

A related classification of hyperplanes by the number of cube vertices they contain was used by Kahn, Komlós and Szemerédi \cite{KKS} and Tao and Vu \cite{TV}. Here we record both the dimension $r$ of the selected approximate relation space and the ratio $|V(\theta)|/q^r$, since both enter the Fourier integral estimate.

When the density loss is bounded, the localization estimates show that the relation vectors lie in an exact subspace $U$. Moreover, the far-frequency restriction forces $U$ to contain no coordinate vector. Theorem~\ref{thm:count} then gives $d\ge k/2-O(1)$ and bounds the number of possible spaces by
\begin{align*}
\exp\!\left\{d\log(k+1)+O_{q,u}(k)\right\},
\end{align*}
where $u$ is bounded and $|U\cap E^k|\ge q^{k-d-u}$. For each fixed space, we estimate the integral in coordinates adapted to its relations. Combining this integral estimate with the count of possible spaces produces the terms
\begin{align*}
d\log(k+1)-d\log(q^n/N).
\end{align*}
Since $N\le q^n(n+1)^{-D}$, their leading contribution is
\begin{align*}
-(D-1)d\log n.
\end{align*}
With $d\ge k/2-O(1)$ and a sufficiently large constant in the choice $k=\Theta(n/\log n)$, this gives the required exponential bound. The coefficient one in the counting estimate is therefore what allows every fixed $D>1$.

For large density loss, the kernel already supplies a strong additional saving, so a coarser count is sufficient. We cover the frequency region by sets defined by approximate relations from selected bases. The estimates in Section~\ref{sec:count} provide many suitable bases and control the number of possible output labels for each input basis. Averaging over these bases controls the total weight of the cover and the cost of the changes of variables used to integrate over it. The saving from the density loss absorbs these costs.

Finally, we choose the moment order and the threshold separating the two regimes, and sum over all possible codimensions and density losses. This proves the far-frequency estimate and completes the proof of Theorem~\ref{thm:averaging}.

\subsection{Real supports}\label{sec:realsupports}

For arbitrary real supports, a basis chosen from a finite product
set may be nearly linearly dependent, with no usable lower bound on
its determinant as the dimension grows. At each fixed dimension,
the nonzero determinants have a positive minimum, but this minimum
need not have a lower bound suitable for the exponentially small
radii in our argument. Approximating the support by rational or
algebraic sets does not by itself give such a bound: the constants
depend on the approximating set and need not remain controlled
as the approximation is refined.

The exact span of the relation vectors is a natural first choice,
but it may not contain a basis for which the Fourier change of
variables has a sufficiently small inverse Jacobian.
For algebraic supports, a field norm gives a determinant lower
bound $\exp\{-O_E(k\log(k+1))\}$; see
Appendix~\ref{sec:algebraic}. For general real supports, we control
the inverse Jacobian through the following three constructions.

First, at precision $\eta=\exp\{-C\log^2(k+1)\}$, we maximize
scaled projected volumes over all dimensions. The comparison with the
preceding dimension bounds the inverse Gram matrix of the selected
frame. Comparison with the next dimension puts every remaining relation
within distance $\eta$ of its span. The selected space may be smaller
than the exact span. A maximal coordinate minor then gives a graph
with bounded coefficients and distinct input labels in a product set.
The selected vectors provide an input basis whose least singular
value is $\Omega_E(\eta/k^3)$.

Second, Proposition~\ref{prop:approxcount} bounds the number of possible
output labels by
\begin{align*}
\exp\{O_E(k(u+1)+d\log(d+1))\}.
\end{align*}
Its proof records labels from the original
finite set, rather than approximating coordinates in a basis with poor conditioning. Its reduction uses the scalar Littlewood--Offord estimate of
Erd\H{o}s \cite{Erdos}; a recursive decrease of input dimension and
density loss keeps both the label count and the accumulated error under
control.

Third, Lemma~\ref{lem:densebases} constructs many bases with bounded
interpolation coefficients. A projection estimate from Hanson--Wright
\cite{HW} and a second moment argument weighted by the determinant give many
prefixes with least singular value $\Omega_E(k^{-1})$.
Only $O_E(u+1)$ completion directions
then depend on $\eta$. The number of prefixes permits averaging
slab indicators before integration. The resulting loss from the inverse Jacobian is bounded by
\begin{align*}
\exp\{O_E(k\log(k+1)+(u+1)\log^2(k+1))\}.
\end{align*}
In the frequency decomposition the counting lemmas are applied with
$u=t+1$, while the kernel product contributes $q^{-nt}$. For large
$t$, the negative term $-nt\log q$ absorbs the displayed cost.
For bounded $t$, localization makes the graph exact, allowing the
count with coefficient one in front of $d\log(k+1)$.

Finally, the passage from Theorem~\ref{thm:averaging} to random normals
uses the random rounding and inversion of randomness construction of
Sections~2 and 5 in \cite{Tik}. We use the decomposition into compressible and incompressible vectors
due to Rudelson and Vershynin \cite{RV}; the contribution from almost constant vectors is controlled by the estimate of Jain, Sah and Sawhney \cite{JSS}, used in Lemma~\ref{lem:structured} below. For an
incompressible normal, Theorem~\ref{thm:averaging} controls the expected
last scale at which its concentration exceeds a fixed linear bound.
Conditioning on all the other columns gives the small ball estimate in
Proposition~\ref{prop:normal}, and the deterministic argument using distances to column spans in Section~\ref{sec:normals} completes the proof of
Theorem~\ref{thm:main}.

\section{Counting linear relations in finite product sets}\label{sec:count}
The high-moment argument in Section~\ref{sec:averaging} requires two counting estimates for linear constraints on finite product sets. The first counts subspaces containing many points of $E^k$. Writing such a subspace as a graph
\begin{align*}
U=\{(x,Ax):x\in\mathbb R^r\},
\end{align*}
its points in $E^k$ correspond to inputs $x\in E^r$ satisfying the exact constraint $Ax\in E^d$.

The second estimate allows an error in this constraint. For a fixed invertible input matrix $W\in E^{r\times r}$, it counts output matrices $Y\in E^{d\times r}$ for which
\begin{align*}
\operatorname{dist}_\infty(YW^{-1}x,E^d)\le\varepsilon
\end{align*}
holds for many inputs $x\in E^r$. Thus, in this section, ``exact'' and ``approximate'' refer to whether the linear constraints are satisfied exactly or up to a prescribed error. The quantities being counted are subspaces in the first estimate and output matrices in the second.

These estimates will be applied to sets of approximate relation vectors arising in the Fourier argument. We also prove that a dense subset of $E^r$ contains many bases with controlled interpolation coefficients and determinants.

Equivalently, if $X$ is uniform on $E^{k}$, then
\begin{align*}
q^{-k}|U\cap E^k|=\mathbb P(X\in U).
\end{align*}
Thus the first estimate may be viewed as a Littlewood--Offord type counting problem, in which the relevant subspaces are classified by their dimension and intersection density.

Direct counting estimates for vectors with large concentration
were developed by Ferber, Jain, Luh and Samotij \cite{FJLS}.
Here the objects to be counted are the spaces of relations among
the frequencies in a Fourier moment.

Throughout this section $E\subset\R$ is fixed, $|E|=q\ge2$,
and $0\in E$. Write $\Delta=\min_{a\ne b\in E}|a-b|>0$.
Let $\mathcal H_d(k,u)$ be the logarithm of the number of subspaces $U$
of codimension $d$ satisfying
$U=\spn(U\cap E^k)$ and $|U\cap E^k|\ge q^{k-d-u}$.

\begin{theorem}\label{thm:count}
There are constants $C_{\ref{thm:count}},c_{\ref{thm:count}}>0$
depending only on $q$ such that for
$0\le d\le k$ and $u\ge0$,
\begin{align}
\mathcal H_d(k,u)\le d\log(k+1)+C_{\ref{thm:count}} k(u+2)^2q^{2u}.
 \label{eq:refinedcount}
\end{align}
If $U\subset\R^k$ has codimension $d$ and contains no coordinate
vector, then
\begin{align}
\frac{|U\cap E^k|}{q^{k-d}}
 \le\left(1+c_{\ref{thm:count}}(k-2d)_+\right)^{-1/2}.
 \label{eq:codim}
\end{align}
\end{theorem}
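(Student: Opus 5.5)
The proof splits into the two displayed estimates. Both rest on the same reduction: write a subspace $U$ of codimension $d$ as a graph over a coordinate set. Put $r=k-d$. If $U=\spn(U\cap E^k)$, then some $r$ points of $U\cap E^k$ have an invertible $r\times r$ minor on a coordinate set $I$ with $|I|=r$. Hence
\begin{align*}
U=\{y:\ y_{I^c}=Ay_I\},\qquad A\in\R^{d\times r},
\end{align*}
and the projection $X\subset E^r$ of $U\cap E^k$ to $I$ is injective, with $|X|\ge q^{r-u}$ and $AX\subset E^d$. For \eqref{eq:refinedcount}, the choice of $I^c$ costs $\log\binom kd\le d\log(k+1)$, which is exactly the main term. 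It therefore remains to show that, for fixed $I$, the number of admissible $A$ is $\exp\{O_q(k(u+2)^2q^{2u})\}$. In particular, no factor $\log k$ may be paid per output row.

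For this I would expose the coordinates of $I^c$ one at a time. For a row $a_j$ of $A$, consider $x$ uniform on $E^r$. Then $\PP(x\in X,\ \langle a_j,x\rangle\in E)\ge q^{-u}$. By the Erd\H{o}s Littlewood--Offord bound, $a_j$ has at most $m_u=O(q^{2u+2})$ nonzero entries, except that the density can also be spent on a genuine constraint, and the chain rule along the rows shows there are at most $O(u+1)$ such rows. The difficulty is specifying the support of a sparse row without paying $\log k$. For this I would choose $I$ lexicographically minimal among valid bases. Each row is then determined by its values (in $E$) on $O(m_u)$ points of $U\cap E^k$ that can be encoded relative to the previously exposed rows. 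The relevant points are those whose coordinates differ on a short window, so the encoding cost is $O(m_u\log q)$ per coordinate rather than per row times $\log k$. Summing over the $k$ coordinates, and over $(u+2)$ levels of density loss each costing $(u+2)q^{2u}$, gives the term $Ck(u+2)^2q^{2u}$. I expect this encoding step to be the main obstacle. If the lexicographic encoding fails, I would fall back on a recursion in $(r,u)$: remove one dependent coordinate and pass to the fibre, whose density loss drops by at least a constant unless the row is a copy of a single input coordinate (a case costing only $\log q$ plus the position already paid in $d\log(k+1)$).

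For \eqref{eq:codim}, use the same graph form. Note that $e_i\in U$ for $i\in I$ exactly when column $i$ of $A$ vanishes, and $e_j\notin U$ for $j\in I^c$ automatically. So the hypothesis says every column of $A$ is nonzero. With $X'$ uniform on $E^k$,
\begin{align*}
\frac{|U\cap E^k|}{q^{k-d}}=q^{d}\,\PP(X'_{I^c}=AX'_I)=\PP_{x}(Ax\in E^d),\qquad x\text{ uniform on }E^r.
\end{align*}
I would choose $I$ (among the possible graph representations) to maximise the number of nonzero entries in some set of rows. Since all $r$ columns are nonzero, a greedy matching produces a row set $J_0$ and disjoint column blocks with total size at least $r-d$, each column appearing in some row. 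I would then condition on all coordinates of $x$ outside one block of size $m\ge (r-d)/|J_0|$. The Erd\H{o}s bound gives $\PP(\langle a_j,x\rangle\in E)\le C_q m^{-1/2}$ for that row. When $r-d$ is large compared with $d$ this already gives $(1+c(r-d))^{-1/2}$. In the regime where $d$ is comparable to $r-d$, I would instead apply the multidimensional Halász/Kleitman form of Littlewood--Offord to $Ax$, projecting onto the rows of $J_0$, to obtain the same $(1+c(k-2d)_+)^{-1/2}$ saving. The case $k\le 2d$ is trivial, since the right side equals $1$.
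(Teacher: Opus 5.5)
Your plan for \eqref{eq:refinedcount} rests on a false intermediate claim. You spend the main term $d\log(k+1)$ on choosing $I^c$. You then need, for each fixed $I$, only $\exp\{O_q(k(u+2)^2q^{2u})\}$ admissible matrices $A$, with no $\log k$ paid per output row.

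The perfect-matching example stated right after the theorem rules this out. Take $k=2m$, $u=0$ and $I=\{1,\ldots,m\}$. Every bijection $\sigma:I\to I^c$ gives an admissible space $\{y:y_{\sigma(i)}=y_i\ (i\in I)\}$, and each is a graph over $I$ with $A$ a permutation matrix. Moreover, $I$ is the lexicographically minimal input set for every one of these spaces, so your canonical choice does not help. This single $I$ therefore carries $m!=\exp\{\tfrac k2\log k-O(k)\}$ matrices $A$, and no encoding of the rows can beat that.

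Your fallback has a related confusion. The position of the copied input coordinate is not already paid for by choosing $I^c$: that choice says which coordinates are outputs, not which input each output row depends on.

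The bookkeeping has to be reversed:
\begin{enumerate}[label=(\roman*),leftmargin=2em]
\item Choosing $I$ costs only $\binom kd\le2^k$.
\item The factor $(r+1)^d$ pays for locating, in each row, its at most one nonzero entry outside a common set $J$ of size $O_q((u+1)q^{2u})$. This is where the coefficient one in front of $d\log(k+1)$ comes from.
\item The paper builds $J$ greedily. Each row has support $O_q(q^{2u})$ by Lemma~\ref{lem:scalar}. Each selected row with two nonzero entries outside the current $J$ costs a factor $1-q^{-2}$ in probability, since $|\alpha E+\beta E|\ge2q-1$. Hence there are only $O_q(u+1)$ selections.
\item The rows, restricted to sets of size at most $|J|+1$, are then counted by Lemma~\ref{lem:row-count}.
\end{enumerate}

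For \eqref{eq:codim}, conditioning outside a single block gives only a one-row bound $C_qm^{-1/2}$ with $m\ge(k-2d)/|J_0|$. That is roughly $C_q\sqrt{d/(k-2d)}$, which is not $(1+c(k-2d))^{-1/2}$ unless $d=O(1)$.

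The regime $0<k-2d\ll d$ is also not addressed. There the target is a fixed constant below one, for example $(1+c)^{-1/2}$ at $k=2d+1$. So any constant $C_q>1$ is fatal, as is any per-equation constant loss of the kind carried by multidimensional Hal\'asz-type bounds.

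The missing idea is to make the per-equation savings multiply:
\begin{enumerate}[label=(\roman*),leftmargin=2em]
\item Write $U=\ker B$ with $B=(I_d,\ast)$ after permuting coordinates. $B$ has no zero column, because $U$ contains no $e_i$.
\item Group the columns by their last nonzero row into $G_1,\ldots,G_d$, with $g_j=|G_j|\ge1$ and $\sum_jg_j=k$.
\item Conditional on the variables in $G_{j+1},\ldots,G_d$, the $j$th equation involves only the variables of $G_j$, all with nonzero coefficients, and later equations do not involve them. Peeling the equations off one at a time gives $|U\cap E^k|/q^{k-d}\le\prod_jq\rho_{g_j}$. Here $\rho_s$ is the largest point mass of $\sum_{i\le s}\alpha_i\xi_i$ with all $\alpha_i\ne0$.
\item This needs a per-equation bound that is sharp at constant scale, $(q\rho_s)^2\le(1+c(s-2)_+)^{-1}$. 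It combines three facts: $q\rho_s\le1$ for all $s$; the strict bound $q\rho_s\le1-(q-1)/q^2$ for $s\ge3$, because $\alpha E=t-\beta E$ holds for at most one $t$; and the Erd\H{o}s bound for large $s$.
\item Finally, $\prod_j(1+x_j)\ge1+\sum_jx_j$ and $\sum_j(g_j-2)_+\ge(k-2d)_+$ finish the proof.
\end{enumerate}
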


After choosing an injective projection onto $r$ coordinates, write
$U=\{(x,Ax):x\in\R^r\}$ with $A\in\R^{d\times r}$,
where $d=k-r$. Its points in $E^k$ correspond to
\begin{align*}
\se A=\{x\in E^r:Ax\in E^d\}.
\end{align*}
They span $U$ exactly when $\se A$ spans $\R^r$.

The first estimate retains coefficient one in front of
$d\log(k+1)$ for bounded $u$. The second will force
$d\ge k/2-O_{q,u}(1)$ once coordinate directions have been
excluded. For $k=2m$, the space given by
$x_{2j-1}=x_{2j}$ has dimension $m$, contains $q^m$ points of $E^k$ and no coordinate vector. Taking a fixed nonzero element of $E$
on one pair and zero elsewhere shows that these points span the space. Different perfect matchings give
different such spaces, since two coordinate functionals agree
identically exactly when they are paired. There are
$(2m)!/(2^m m!)$ matchings, whose logarithm is
$m\log(2m)+O(m)$. This matches the leading entropy coefficient
in this regime.

Our second estimate concerns approximate relations. The columns
of an invertible matrix $W\in E^{r\times r}$ specify the inputs of
a linear map, and the columns of $Y\in E^{d\times r}$ specify its
values on those inputs. We count the possible $Y$ for which the
map sends many further points of $E^r$ close to $E^d$.

\begin{proposition}\label{prop:approxcount}
There are constants
$C'_{\ref{prop:approxcount}}=C'_{\ref{prop:approxcount}}(E)>0$ and
$C_{\ref{prop:approxcount}}>0$ depending only on $q$ such
that the following holds. Let
$r\ge1$ and $d\ge0$ be integers with $K=r+d\ge2$ and $\varepsilon=\exp[-C'_{\ref{prop:approxcount}}\log^2(K+1)]$.
For every invertible $W\in E^{r\times r}$ and $u\ge0$, the
number of $Y\in E^{d\times r}$ satisfying
\begin{align*}
\left|\{x\in E^r:\dist_\infty(YW^{-1}x,E^d)\le\varepsilon\}\right|
 \ge q^{r-u}
\end{align*}
is at most
\begin{align*}
\exp\{C_{\ref{prop:approxcount}}[K(u+1)+d\log(d+1)]\}.
\end{align*}
\end{proposition}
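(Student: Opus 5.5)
We plan to count $Y$ row by row, since the condition splits across output coordinates. Write $A=YW^{-1}\in\R^{d\times r}$, with rows $a_1,\dots,a_d$; the $j$th row of $Y$ is $a_j^TW$, so $Y$ and $A$ determine each other. Let
\[
G=\{x\in E^r:\dist_\infty(Ax,E^d)\le\varepsilon\},\qquad |G|\ge q^{r-u}.
\]
Every $x\in G$ satisfies $\min_{b\in E}|\langle a_j,x\rangle-b|\le\varepsilon$ for each $j$. Hence each row $a_j$ lies in the set of $a\in\R^r$ for which
\[
G_a=\{x\in E^r:\dist(\langle a,x\rangle,E)\le\varepsilon\}
\]
has density at least $q^{-u}$. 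The admissible output rows $y_j=W^Ta_j\in E^r$ are therefore among the $y\in E^r$ whose associated functional $a=W^{-T}y$ is $\varepsilon$-close to $E$-valued on a $q^{-u}$-dense set $G$. This set is common to all rows. We would first count the possible ``shapes'' of $G$ and then the rows compatible with a given $G$, aiming for roughly $\exp\{O(K(u+1))\}$ choices per row up to a factor shared across rows.

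\textbf{Step 1: small description of each row.} Fix a row functional $a$. We would select inputs recursively from the original finite set, as the outline in Section~\ref{sec:realsupports} suggests, rather than using coordinates of $W^{-1}$. Choose $x^{(1)}\in G$. Among points of $G$, the value $\langle a,x\rangle$ takes one of $q$ labels in $E$ up to $\varepsilon$, so some label class retains density at least $q^{-u-1}$. Now use Erd\H{o}s's Littlewood--Offord bound on the scalar form $\langle a,x\rangle$, $x$ uniform on $E^r$. If $a$ has many coordinates of size at least $\varepsilon$-scale, the mass near any single value is $O(m^{-1/2})$. Each time we fix a label, either the density drops by a factor $\le q^{-1}$, which uses up the budget $u$ after $O(u)$ steps, or the effective dimension of the unconstrained directions drops. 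Recording at each step one label from $E$ (a factor $q$) and one input index (a factor at most $K$) gives the $K(u+1)$ term for that part. Across the $d$ rows the shared structure costs $d\log(d+1)$, from choosing which rows reuse which recorded input directions.

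\textbf{Step 2: reconstruction.} Once $r$ linearly independent inputs $x^{(i)}\in G\cap E^r$ and their labels $b_i\in E$ are fixed, the row $a$ is determined up to error $\varepsilon\,\norm{X^{-1}}$, where $X$ is the matrix of these inputs. Since $y=W^Ta$ ranges over the $\Delta$-separated set $E^r$, we need $\varepsilon\norm{W}\norm{X^{-1}}<\Delta/2$ to pin down $y$. This is the main obstacle: nothing bounds $\norm{X^{-1}}$ for general real $E$. We would handle it by not requiring full-rank inputs. We would stop the recursion when the remaining free directions are those along which $G$ has full density, i.e.\ directions orthogonal to $a$ on a dense subcube. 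On those directions the row is $\varepsilon$-close to exact, and exactness on a lattice-like subset of $E^{r'}$ with separation $\Delta$ forces determinacy once $\varepsilon$ is below $\exp[-C'\log^2(K+1)]$. This choice of $\varepsilon$ should beat the determinant losses of at most $\exp\{O(\log^2 K)\}$ per recursion level, with $O(\log K)$ levels from halving the dimension each time.

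Multiplying the per-row counts $\exp\{O(K(u+1)/d+\log(d+1))\}$ over the rows, with the dense set shared, gives $\exp\{C[K(u+1)+d\log(d+1)]\}$.
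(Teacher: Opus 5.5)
Your proposal has two genuine gaps, and they are exactly the two difficulties this proposition is designed to overcome.

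\textbf{Gap 1: the rows are never coupled.} Nothing in Step~1 uses the fact that one set $G$ of density $q^{-u}$ must serve all $d$ rows at once, and without that the bound fails. Take $W=I$ and $E=\{0,1\}$. Every $y\in E^r$ with at most $u$ ones has its own solution set of density at least $2^{-u}$. Counting rows separately therefore allows about $\exp\{du\log(r/u)\}$ matrices. For $d=r$ and $u=\sqrt r$ this exceeds $\exp\{C[K(u+1)+d\log(d+1)]\}$.

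Your per-row tools do not supply the joint saving:
\begin{itemize}
\item Erd\H{o}s's bound only limits a row to $O(q^{2u})$ large coefficients. That is exponential in $u$, whereas the statement needs $K(u+1)$ with $C$ independent of $u$; in the application $u=t+1$ is unbounded.
\item The steps in which ``the effective dimension drops'' are bounded only by $r$, not by $O(u)$. At a factor $q$ per recorded label this is no better than the trivial count $q^{dr}$.
\item The count of ``shapes'' of $G$ and the $d\log(d+1)$ cost of ``shared structure'' are asserted, not derived. The per-row figure $\exp\{O(K(u+1)/d+\log(d+1))\}$ is simply the target divided by $d$.
\end{itemize}

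The missing idea is a mechanism in which recording output rows is paid for by a drop in the density loss. In the paper this is Lemma~\ref{lem:compression}. Singular values are thresholded at a scale proportional to $\varepsilon$. Either this produces $L$ disjoint well-conditioned column blocks, and a block small ball estimate bounds each fiber of $\psi(x)=(x_J,f_I(x))$ by $q^{p-|J|-3h}$; or only $O_q(m)$ columns are moved into $J$. Hence recording $h$ labels lowers the density loss by $2h$ and cuts the input dimension to $O_q(m)$. Halving the row set recursively then gives $\exp\{C[m\log(m+1)+m(v+1)]\}$. The record cost $O(m_ih_i)$ is cancelled by the $2h_i$ drop, and the remaining $O(m)$ by the gap $m\log(m+1)-\sum_i m_i\log(m_i+1)\ge m\log(4/3)$.

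\textbf{Gap 2: the reconstruction needs conditioning that is unavailable.} For arbitrary real $E$ there is no useful lower bound on nonzero determinants with entries in $E$. Even the field-norm bound for algebraic $E$ gives only $\exp\{-Cm\log(m+1)\}$. So ``determinant losses of at most $\exp\{O(\log^2K)\}$ per level'' is unjustified. Even if granted, $O(\log K)$ levels would cost $\exp\{O(\log^3K)\}$, which $\varepsilon=\exp[-C'\log^2(K+1)]$ does not beat. The escape through ``full density'' directions where approximate labels become exact is not an argument either: exactness results such as Lemma~\ref{lem:localization}(ii) come from compactness, with constants depending on the density loss.

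The paper never inverts an input matrix. It counts the original labels $a\in E^r$ at nodes carrying maps with $FR=I_p$:
\begin{itemize}
\item Labels whose coefficient rows $aR$ lie in an $\ell$-dimensional subspace number at most $q^\ell$, by the tube count \eqref{eq:tubecount} applied after multiplication by $F$.
\item The label error propagates via $a-aR'F'=a-aRF+(b-bUT)F$. Here $R=W^{-1}$ never appears, and $F$ has bounded entries because its rows are rows of $W$ or near-labels.
\item Maximal-volume selections keep all interpolation and regression coefficients bounded by one. Errors therefore grow by only $C(K+1)^5$ per level, hence $\exp\{O(\log^2K)\}$ over the $O(\log K)$ levels.
\end{itemize}
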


We prove Theorem~\ref{thm:count} first. The proof of
Proposition~\ref{prop:approxcount} uses a reduction of the input
dimension, followed by induction on the number of output rows.
\subsection{Counting subspaces}

For an affine subspace $L\subset\R^k$ and any finite $F\subset\R$,
an injective coordinate projection gives
\begin{align}
|L\cap F^k|\le |F|^{\dim L}.
 \label{eq:projectioncount}
\end{align}
Indeed, choose coordinate functionals forming a basis on its
direction space; their joint restriction to $L$ is injective.
Thus $|U\cap E^k|\ge q^{r-u}$ expresses a loss of at most $q^u$
from the maximum $q^r$ for a subspace of dimension $r$.
The assumption $0\in E$ makes coordinate subspaces examples with
zero density loss.

We use the following form of the Littlewood--Offord estimate
both for exact relations and for approximate ones.
\begin{lemma}\label{lem:scalar}
There is a constant $C_{\ref{lem:scalar}}>0$ depending only on $q$
such that the following holds. Let $b_i$ be independent random variables uniformly distributed on $E$. If at least $s\ge1$ coefficients satisfy
$|a_i|>2h/\Delta$, where $h\ge0$, then
\begin{align*}
\Lc\left(\sum_i a_i b_i,h\right)\le C_{\ref{lem:scalar}}/\sqrt s.
\end{align*}
\end{lemma}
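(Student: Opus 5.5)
We reduce the statement to the classical Erd\H{o}s form of the Littlewood--Offord inequality for Bernoulli signs, by splitting each uniform variable on $E$ into a random pair and a random sign. Let $S_0=\{i:|a_i|>2h/\Delta\}$, so $|S_0|\ge s$. Conditioning on $(b_i)_{i\notin S_0}$ only translates the sum, so it suffices to bound $\Lc(\sum_{i\in S_0}a_ib_i,h)$. We may also discard coefficients beyond the first $s$ in $S_0$ by the same conditioning, and assume $|S_0|=s$.

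Order $E=\{e_1<\cdots<e_q\}$. If $q$ is even, pair the points as $\{e_1,e_2\},\{e_3,e_4\},\ldots$. If $q$ is odd, first note that with probability $(q-1)/q$ the variable $b_i$ avoids $e_q$, and then pair the remaining points. Concretely, we represent $b_i=\alpha_i+\beta_i\epsilon_i$ with $(\alpha_i,\beta_i,\epsilon_i)$ independent across $i$. Here $(\alpha_i,\beta_i)$ is determined by a uniformly random pair $\{e_{2j-1},e_{2j}\}$: $\alpha_i$ is its midpoint and $\beta_i$ its half-gap. The sign $\epsilon_i$ is Rademacher and independent of the pair. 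When $q$ is odd, we add an indicator $\gamma_i$ with $\PP(\gamma_i=0)=1/q$. On $\{\gamma_i=0\}$ we set $b_i=e_q$ and $\beta_i=0$; on $\{\gamma_i=1\}$ the pair is uniform among the $(q-1)/2$ pairs. In every case, whenever $\beta_i\neq0$ we have $\beta_i\ge\Delta/2$, and $\PP(\beta_i\ge\Delta/2)\ge (q-1)/q\ge1/2$.

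By Chernoff, with probability at least $1-e^{-cs}$ the set $J=\{i\in S_0:\beta_i\ge\Delta/2\}$ has $|J|\ge s/4$. On this event, condition on all $\alpha,\beta$ and on the signs outside $J$. The remaining sum is $z_0+\sum_{i\in J}a_i\beta_i\epsilon_i$ with $|a_i\beta_i|>(2h/\Delta)(\Delta/2)=h$. By Erd\H{o}s's theorem, in its form for coefficients of modulus greater than $h$ with an interval of length $2h$, the probability of landing in any closed interval of length $2h$ is at most $\binom{m}{\lfloor m/2\rfloor}2^{-m}\le C/\sqrt m$, where $m=|J|\ge s/4$. Adding the exceptional probability $e^{-cs}\le C/\sqrt s$ gives $\Lc\le C_{\ref{lem:scalar}}/\sqrt s$, with $C_{\ref{lem:scalar}}$ depending only on $q$ (in fact, absolutely).

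The only delicate point is the strict versus non-strict inequality in Erd\H{o}s's lemma. We need that $|a_i\beta_i|>h$ excludes two distinct sign patterns in a Sperner antichain from both lying in a closed interval of length $2h$. Two sums whose sign patterns differ by flipping a set of signs from $-$ to $+$ differ by at least $2\min|a_i\beta_i|>2h$. So the sums falling in the interval form an antichain, and Sperner's theorem applies.
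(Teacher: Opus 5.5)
Your proof is correct, but it takes a different route from the paper.

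The paper works directly with the summands $a_ib_i$, $i\in I$. The separation $|a_i|\Delta>2h$ means every closed interval of length $2h$ contains at most one point of $a_iE$, so $\Lc(a_ib_i,h)=q^{-1}$. The Kolmogorov--Rogozin inequality then gives $\Lc(\sum_{i\in I}a_ib_i,h)\le C/\sqrt{s(1-q^{-1})}$. The case $h=0$ is handled separately, by passing to a small positive radius $h'$.

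You instead write each $b_i$ as a midpoint plus a half-gap times an independent Rademacher sign, with an extra atom when $q$ is odd. Hoeffding then guarantees at least $s/4$ active signs, and you apply the Erd\H{o}s--Sperner bound to the resulting Rademacher sum, whose coefficients satisfy $|a_i\beta_i|>h$.

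What each route buys:
\begin{itemize}
\item The paper's argument is shorter and relies on a general black box that holds for arbitrary summand laws.
\item Yours is self-contained and elementary. It needs no separate treatment of $h=0$, since the strict inequality $|a_i\beta_i|>h$ holds in both cases. It also makes it transparent that the constant is absolute.
\end{itemize}

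One small point to state explicitly: before the Sperner step, replace $\epsilon_i$ by $-\epsilon_i$ wherever $a_i\beta_i<0$. This does not change the joint law, and it makes all coefficients positive. Only then do comparable sign patterns give sums differing by $2\sum_{\text{flipped}}|a_i\beta_i|>2h$, which is what the antichain argument needs.
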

\begin{proof}
Choose a set $I$ of $s$ indices satisfying
$|a_i|\Delta>2h$. Suppose first that $h>0$.
For each $i\in I$, distinct points of $a_iE$ are separated
by more than $2h$, so
\begin{align*}
\Lc(a_ib_i,h)=q^{-1}.
\end{align*}
Adding independent summands cannot increase concentration.
The Kolmogorov--Rogozin inequality (Theorem~1 in \cite{Rogozin}) then gives
\begin{align*}
\Lc\left(\sum_i a_ib_i,h\right)
\le \Lc\left(\sum_{i\in I}a_ib_i,h\right)
\le
\frac{C}{\sqrt{\sum_{i\in I}(1-\Lc(a_ib_i,h))}}
=
\frac{C}{\sqrt{s(1-q^{-1})}}
\le \frac{\sqrt2 C}{\sqrt s},
\end{align*}
where $C$ is an absolute constant.

If $h=0$, choose $h'>0$ such that
$2h'<\Delta\min_{i\in I}|a_i|$.
Applying the preceding argument at radius $h'$ and using
$\Lc(X,0)\le\Lc(X,h')$ proves the same bound.
\end{proof}

Before proving Theorem~\ref{thm:count}, we establish a lemma that bounds the number of coefficient vectors on a fixed set of coordinates.
\begin{lemma}\label{lem:row-count}
Let $E\subset\mathbb R$ have cardinality $q\ge2$, and write
\begin{align*}
\mathcal S(a)=\{b\in E^p:\langle a,b\rangle\in E\}
\qquad(a\in\mathbb R^p).
\end{align*}
There is a constant $C_{\ref{lem:row-count}}>0$ depending only on $q$ such that
for every integer $p\ge0$ and every $u\ge0$,
\begin{align*}
\left|
\left\{a\in\mathbb R^p:
\operatorname{span}\mathcal S(a)=\mathbb R^p,\quad
|\mathcal S(a)|\ge q^{p-u}
\right\}
\right|
\le \exp\{C_{\ref{lem:row-count}}(p+1)(u+1)\}.
\end{align*}
\end{lemma}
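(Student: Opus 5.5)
The plan is a double count of pairs consisting of a vector $a$ and an ordered basis of $\R^p$ drawn from $\se a$, together with the values of $a$ on that basis. The point is that $a$ is recovered linearly from such data, while each admissible $a$ owns many such data. For an admissible $a$, choose $b^1,\dots,b^p\in\se a$ linearly independent and set $e_i=\langle a,b^i\rangle\in E$. Then $a$ is the unique solution of $Ba=e$, where $B$ has rows $b^i$. So the map $a\mapsto(b^1,\dots,b^p,e)$ can be inverted on its image, and $a$ is determined by the tuple. The total number of tuples is at most $(q^p\cdot q)^p=q^{p(p+1)}$. This is far too large on its own, so we divide by a lower bound on the number of tuples attached to each fixed $a$.

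\textbf{Lower bound per $a$.} Fix an admissible $a$ and count ordered independent sequences $(b^1,\dots,b^p)$ in $\se a$ chosen one at a time. Given $b^1,\dots,b^{i-1}$, their span $L_{i-1}$ has dimension $i-1$. By \eqref{eq:projectioncount}, $|L_{i-1}\cap E^p|\le q^{i-1}$, so the number of choices for $b^i$ is at least
\begin{align*}
|\se a|-q^{i-1}\ge q^{p-u}-q^{i-1}.
\end{align*}
\begin{itemize}
\item For $i-1\le p-u-1$, this is at least $(1-q^{-1})q^{p-u}\ge q^{p-u}/2$.
\item For the remaining indices, of which there are at most $u+1$, we only use that the number of choices is at least $1$. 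This holds because $\se a$ spans $\R^p$, so some element lies outside the proper subspace $L_{i-1}$.
\end{itemize}
Each $a$ then corresponds to at least $\prod_{i\le p-u}(q^{p-u}/2)$ tuples, counting only the $b$'s, since the values $e_i$ are determined by $a$.

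\textbf{Comparison.} Since distinct $a$ give disjoint sets of tuples, the number of admissible $a$ is at most $q^{p(p+1)}$ divided by this product. Step by step, each of the first $\max(p-u,0)$ indices contributes a ratio at most $q^{p+1}\cdot 2q^{-(p-u)}=2q^{u+1}$. Each of the at most $\min(p,u+1)$ remaining indices contributes at most $q^{p+1}$. Hence the count is at most
\begin{align*}
(2q^{u+1})^{p}\,q^{(p+1)(u+1)}\le \exp\{C(p+1)(u+1)\},
\end{align*}
with $C$ depending only on $q$. The case $p=0$ is trivial, since there is at most one vector.

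The only delicate point is the bookkeeping at the transition index where $q^{i-1}$ becomes comparable to $q^{p-u}$. This is handled by using the trivial bound $1$ for the last $u+1$ steps, which is exactly where the spanning hypothesis is used. Noninteger $u$ is handled the same way with $\lfloor u\rfloor+1$ exceptional steps. I expect no real obstacle beyond this, and Lemma~\ref{lem:scalar} should not be needed here.
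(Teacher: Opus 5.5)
Your argument is correct, and it takes a different and shorter route than the paper. You double count ordered bases of $\R^p$ drawn from $\mathcal S(a)$, together with their labels $e\in E^p$. The pair $(B,e)$ determines $a=B^{-1}e$, and there are at most $q^{p(p+1)}$ such pairs. The greedy count via \eqref{eq:projectioncount} gives each admissible $a$ at least $(q^{p-u}/2)^{\lfloor p-u\rfloor_+}$ bases, and the spanning hypothesis supplies an extension at each of the remaining at most $\lceil u\rceil$ steps. The resulting exponent is at most $(\log 2+2\log q)(p+1)(u+1)$.

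The paper instead proves a structural statement. It takes an inclusion-maximal set $I$ of coordinates on which $(x_0,x_I)\mapsto -x_0+\langle a_I,x_I\rangle$ is injective on $E^{|I|+1}$, and shows $g=|I|\le u$ by a point-probability argument. Maximality then writes every other coordinate as a $\Lambda$-affine combination of $a_I$, where $\Lambda=\{x/y:x,y\in E-E,\ y\ne0\}$. Thus $a=P+GV$ with $P,V$ having entries in the finite set $\Lambda$, and the $g$ free reals $G$ are recovered from only $g$ points of $\mathcal S(a)$.

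That route gives an inverse-type description: admissible rows have at most $u$ real degrees of freedom, the rest fixed over a finite set of ratios. This is more than a count, but only the count is used in the proof of Theorem~\ref{thm:count}. Your double count reaches the same $\exp\{O_q((p+1)(u+1))\}$ bound using only \eqref{eq:projectioncount} and the hypotheses, without $\Lambda$ or the maximality argument.
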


\begin{proof}
The case $p=0$ is immediate. Fix a vector $a$ satisfying the
hypotheses, and set
\begin{align*}
\Lambda=\{x/y:x,y\in E-E,\ y\ne0\}.
\end{align*}
Then $|\Lambda|\le q^4$, $0,1\in\Lambda$, and
$-\Lambda=\Lambda$. Choose an inclusion-maximal set
$I=\{i_1,\ldots,i_g\}\subseteq[p]$, with indices in increasing
order, such that
\begin{align*}
(x_0,\ldots,x_g)\longmapsto
-x_0+\sum_{\ell=1}^g a_{i_\ell}x_\ell
\end{align*}
is injective on $E^{g+1}$. Such a set exists since
$I=\emptyset$ has this property. Set
$G=(a_{i_1},\ldots,a_{i_g})$.

Let $b$ and $c$ be independent and uniform on $E^p$ and
$E$, respectively. Conditional on $(b_j)_{j\notin I}$,
injectivity permits at most one choice of
$(c,b_{i_1},\ldots,b_{i_g})$ satisfying
$\langle a,b\rangle=c$. Hence
\begin{align*}
q^{-u-1}
\le \frac{|\mathcal S(a)|}{q^{p+1}}
=\mathbb P(\langle a,b\rangle=c)
\le q^{-g-1},
\end{align*}
so $g\le u$.

For $j\notin I$, maximality implies that adjoining $a_j$
produces two distinct inputs with the same image. Their
difference gives
\begin{align*}
-\delta_0+\sum_{\ell=1}^g a_{i_\ell}\delta_\ell
+a_j\delta_*=0,
\qquad
a_j=\frac{\delta_0}{\delta_*}
-\sum_{\ell=1}^g a_{i_\ell}\frac{\delta_\ell}{\delta_*},
\end{align*}
where all differences belong to $E-E$, and
$\delta_*\ne0$ by injectivity for $I$.
Using the identity representation for the selected coordinates,
we obtain
\begin{align*}
a=P+GV,
\qquad P\in\Lambda^p,\quad V\in\Lambda^{g\times p},
\end{align*}
where the columns of $V$ indexed by $I$ form $I_g$.

If $g\ge1$, the spanning assumption and
$\operatorname{rank}V=g$ allow us to choose
$z_1,\ldots,z_g\in\mathcal S(a)$ such that
$Z=(z_1,\ldots,z_g)$ has $VZ$ invertible.
Writing $y=aZ\in E^g$, we recover
\begin{align*}
G=(y-PZ)(VZ)^{-1}.
\end{align*}
Thus $(P,V,Z,y)$ determines $a$, without any additional
record of $I$. For fixed $g$, the number of possible data
is at most $|\Lambda|^{p+gp}q^{pg+g}$.
When $g=0$, we simply have $a=P$, giving the same bound.
Consequently,
\begin{align*}
\begin{aligned}
\left|
\left\{a\in\mathbb R^p:
\operatorname{span}\mathcal S(a)=\mathbb R^p,\ 
|\mathcal S(a)|\ge q^{p-u}
\right\}
\right|
&\le
\sum_{g=0}^{\min(p,\lfloor u\rfloor)}
|\Lambda|^{p+gp}q^{pg+g}\\
&\le \exp\{C(p+1)(u+1)\},
\end{aligned}
\end{align*}
where $C$ depends only on $q$.
\end{proof}

\begin{proof}[Proof of Theorem~\ref{thm:count}]
We first prove \eqref{eq:refinedcount}. Throughout the proof,
$c,C>0$ depend only on $q$ and may change from line to line.
Let $r=k-d$. If $r=0$ or $d=0$, there is at most one
subspace to count, so assume $r,d\ge1$.
For each counted subspace $U$, choose $r$ coordinates on
which the coordinate projection is injective. After placing
these coordinates first, write
\begin{align*}
U=\{(x,Ax):x\in\mathbb R^r\},
\qquad A\in\mathbb R^{d\times r}.
\end{align*}
Then $\mathcal S(A)=\{x\in E^r:Ax\in E^d\}$ spans
$\mathbb R^r$ and has cardinality at least $q^{r-u}$.

Let $b$ be uniform on $E^r$. For a nonzero row $a$ of
$A$ with $s$ nonzero entries, Lemma~\ref{lem:scalar} gives
\begin{align*}
q^{-u}
\le \mathbb P(\langle a,b\rangle\in E)
\le \frac{qC}{\sqrt s}.
\end{align*}
Thus every row has support size at most $Cq^{2u}$.

We construct a common set $J\subseteq[r]$ such that every
row has at most one nonzero entry outside $J$.
Starting with $J=\emptyset$, select any row with at least
two nonzero entries outside the current $J$, add its entire
support to $J$, and repeat until no such row remains.
For nonzero $\alpha,\beta$, one has
$|\alpha E+\beta E|\ge2q-1$: if the two sets are ordered as
$\{x_1<\cdots<x_q\}$ and $\{y_1<\cdots<y_q\}$, the sums
$x_1+y_1<\cdots<x_q+y_1<x_q+y_2<\cdots<x_q+y_q$
are distinct. Therefore, after fixing all but two new coordinates, at least one of their $q^{2}$ assignments violates the selected row constraint
\begin{align*}
\langle a,b\rangle\in E.
\end{align*}
Hence the conditional probability that this constraint holds is at most $1-q^{-2}$.

All previously selected constraints depend only on coordinates
in the old set $J$, so this bound remains valid conditional
on those constraints. After $h$ selections, successive
conditioning gives
\begin{align*}
q^{-u}
\le \mathbb P(\text{all selected row constraints hold})
\le (1-q^{-2})^h.
\end{align*}
It follows that $h\le C(u+1)$ and
$|J|\le C(u+1)q^{2u}$. At termination, every row has at most
one nonzero entry outside $J$.

Fix $m=\lceil C(u+1)q^{2u}\rceil$ large enough to bound
$|J|$ uniformly. There are at most $2^k$ choices of input
coordinates, at most $2^r$ choices of $J$ with
$|J|\le m$, and at most $(r+1)^d$ choices of the
additional coordinate, or its absence, for each row.
For a fixed row $a$, these choices specify a set
$Q\subseteq[r]$ containing its support, with
$p=|Q|\le m+1$. Projection onto $Q$ gives
\begin{align*}
|\pi_Q(\mathcal S(A))|
\ge \frac{|\mathcal S(A)|}{q^{r-p}}
\ge q^{p-u}.
\end{align*}
Moreover, this image spans $\mathbb R^p$ and is contained
in $\mathcal S(a_Q)$, where $a_Q$ is the restriction of
$a$ to $Q$. Lemma~\ref{lem:row-count} therefore bounds
the number of possible restricted rows by
$\exp\{C(p+1)(u+1)\}$.
The entries outside $Q$ are zero, so
\begin{align*}
\begin{aligned}
\mathcal H_d(k,u)
&\le (k+r)\log2+d\log(r+1)+Cd(m+2)(u+1)\\
&\le d\log(k+1)+Ck(u+2)^2q^{2u}.
\end{aligned}
\end{align*}
This proves \eqref{eq:refinedcount}. The coefficient one in
the leading term comes from the factor $(r+1)^d$.

We next prove \eqref{eq:codim}. Let $\xi_i$ be independent
and uniform on $E$, and define
\begin{align*}
\rho_s
=\sup_{\substack{\alpha_1,\ldots,\alpha_s\ne0\\ t\in\mathbb R}}
\mathbb P\left(\sum_{i=1}^s\alpha_i\xi_i=t\right),
\qquad s\ge1,
\end{align*}
where the coefficients are real. Conditioning on all but one
variable gives $\rho_s\le q^{-1}$, and
Lemma~\ref{lem:scalar} gives $\rho_s\le C/\sqrt s$.
For $X=\alpha\xi_1+\beta\xi_2$, with
$\alpha,\beta\ne0$,
\begin{align*}
\mathbb P(X=t)=q^{-2}|\alpha E\cap(t-\beta E)|.
\end{align*}
A point mass equals $q^{-1}$ only if
$\alpha E=t-\beta E$, which can hold for at most one $t$,
since a finite nonempty set has no nonzero translation
symmetry. All other point masses are at most $(q-1)/q^2$.
For $\gamma\ne0$, averaging over the $q$ distinct points
$t-\gamma x$, $x\in E$, therefore gives
\begin{align*}
\mathbb P(X+\gamma\xi_3=t)
\le \frac1q\left(\frac1q+(q-1)\frac{q-1}{q^2}\right)
=\frac1q\left(1-\frac{q-1}{q^2}\right).
\end{align*}
Further independent summands cannot increase the largest
point probability. Combining this strict bound for $s\ge3$
with $\rho_s\le C/\sqrt s$, and using
$\rho_s\le q^{-1}$ for $s=1,2$, we may choose $c>0$
depending only on $q$ such that
\begin{align*}
(q\rho_s)^2\le \bigl(1+c(s-2)_+\bigr)^{-1}
\qquad(s\ge1).
\end{align*}

Let $U$ have codimension $d$ and contain no coordinate
vector. The case $k=0$ is immediate; for $k>0$, the
assumption forces $d\ge1$. Write
$U=\ker B$, where $B\in\mathbb R^{d\times k}$ has rank
$d$. It has no zero column, since a zero $i$-th column
would imply $e_i\in U$. After permuting coordinates,
place $d$ independent columns first and multiply on the
left by their inverse, so that these columns become $I_d$.
Group the column indices according to the last nonzero row:
let $G_j$ be the group corresponding to row $j$, and write
$g_j=|G_j|$. Then $g_j\ge1$ and
$\sum_{j=1}^d g_j=k$.

Let $b$ now be uniform on $E^k$. Conditional on all later
groups, the first equation of $Bb=0$ involves the
$g_1$ variables in $G_1$ with nonzero coefficients,
and hence holds with probability at most $\rho_{g_1}$.
The remaining equations do not involve these variables.
Integrating out $G_1$ and repeating with
$G_2,\ldots,G_d$, we obtain
\begin{align*}
\begin{aligned}
\frac{|U\cap E^k|}{q^{k-d}}
=q^d\mathbb P(Bb=0)
&\le \prod_{j=1}^d q\rho_{g_j}\\
&\le \left(\prod_{j=1}^d
              [1+c(g_j-2)_+]\right)^{-1/2}\\
&\le \left(1+c\sum_{j=1}^d(g_j-2)_+\right)^{-1/2}\\
&\le \bigl(1+c(k-2d)_+\bigr)^{-1/2}.
\end{aligned}
\end{align*}
Here we used $\prod_j(1+x_j)\ge1+\sum_jx_j$ for
$x_j\ge0$, and
$\sum_j(g_j-2)_+\ge(\sum_jg_j-2d)_+=(k-2d)_+$.
This proves \eqref{eq:codim}.
\end{proof}

\subsection{Approximate interpolation and bases}
To prove Proposition~\ref{prop:approxcount}, we count the
original rows of $Y$, which belong to $E^r$, rather than
approximate the coefficient rows of $YW^{-1}$. This avoids
errors amplified by a poorly conditioned input matrix $W$.
The argument successively reduces the input dimension while
retaining these original rows. During the reduction, the rows
may lie close to a low-dimensional affine subspace rather
than exactly in it. We therefore need the following extension
of \eqref{eq:projectioncount}.

Let $L\subset\mathbb R^r$ be an affine subspace of dimension
$\ell$. If $0\le\rho\le\Delta/[4(r+1)]$, then
\begin{align}
\left|\{a\in E^r:\operatorname{dist}_\infty(a,L)\le\rho\}\right|
\le q^\ell.
\label{eq:tubecount}
\end{align}
For $\ell=r$, this is immediate. Otherwise, choose a basis
matrix for the direction space of $L$, and select $\ell$
rows whose determinant has maximal absolute value. Using the
corresponding coordinates as inputs, write
\begin{align*}
L=\{(x,Ax+b):x\in\mathbb R^\ell\}.
\end{align*}
Maximality of the chosen minor gives $|A_{ij}|\le1$:
replacing one selected row by an output row multiplies the
determinant by the corresponding entry of $A$.

If $(x,y)\in E^r$ satisfies
$\operatorname{dist}_\infty((x,y),L)\le\rho$, choose
$(x',Ax'+b)\in L$ within distance $\rho$. Then
\begin{align*}
\|y-Ax-b\|_\infty
\le \|y-Ax'-b\|_\infty+\|A(x'-x)\|_\infty
\le(\ell+1)\rho.
\end{align*}
Thus two points $(x,y),(x,\widetilde y)\in E^r$ within
distance $\rho$ of $L$, with the same input coordinates,
satisfy
\begin{align*}
\|y-\widetilde y\|_\infty
\le2(\ell+1)\rho<\Delta.
\end{align*}
Since distinct elements of $E$ are separated by at least
$\Delta$, we must have $y=\widetilde y$. Projection onto
the $\ell$ input coordinates is therefore injective on the
set being counted, proving \eqref{eq:tubecount}.
The same argument applies when $\ell=0$, with no input
coordinates.

The next lemma provides the dimension reduction used in the
count. Starting from $m$ output rows and $p$ input
coordinates, it selects some input coordinates and $h$
output rows to form a new input vector of dimension
$p'\le C_qm$. If the original system has at least
$q^{p-v}$ approximate solutions, the reduced system has at
least $q^{p'-(v-2h)}$. Thus recording $h$ output rows
reduces the density loss by at least $2h$, which will offset
the cost of recording these rows in the recursive count.
The approximation affects the linear map; the retained coordinates
and selected output labels still determine the full output label exactly.

\Needspace{5\baselineskip}
\begin{lemma}\label{lem:compression}
There are constants $C_{\ref{lem:compression}}\ge1$ depending only on $q$ and
$C'_{\ref{lem:compression}},c_{\ref{lem:compression}}>0$ depending only on $E$ with the following property. Let
$B\in\R^{m\times p}$, $m,p\ge1$, $v\ge0$, and
$0<\varepsilon\le c_{\ref{lem:compression}}(m+p)^{-4}$. Suppose
$\mathcal S\subset E^p$, $|\mathcal S|\ge q^{p-v}$, and
$f:\mathcal S\to E^m$ satisfy
$\norm{Bx-f(x)}_\infty\le\varepsilon$.
There are sets $J\subset[p]$ and $I\subset[m]$, with
$h=|I|$ and $p'=|J|+h$, such that
\begin{align*}
p'\le\min(p,C_{\ref{lem:compression}}m),\qquad 2h\le v.
\end{align*}
The map $Tx=(x_J,B_Ix)$ has full row rank and a right inverse $U$
determined by $J,I,B_I$. With $B'=BU$, we have
\begin{align*}
\norm{B-B'T}_{2\to\infty}
 \le C'_{\ref{lem:compression}}(m+p)^{5/2}\varepsilon.
\end{align*}
For $\psi(x)=(x_J,f_I(x))$, the value $f(x)$ is determined by
$\psi(x)$, and
\begin{align*}
|\psi(\mathcal S)|\ge q^{p'-(v-2h)},\qquad
 \norm{B'\psi(x)-f(x)}_\infty\le C'_{\ref{lem:compression}}(m+p)^3\varepsilon.
\end{align*}
\end{lemma}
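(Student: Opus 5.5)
The plan is to imitate the greedy selection in the proof of \eqref{eq:refinedcount}, but to work with residual rows instead of supports. Each selected output row will be promoted to an input coordinate. I fix a threshold $\tau=C_0(m+p)^{2}\varepsilon$ and call an entry of a vector large if its modulus exceeds $\tau$. I also fix an integer $s_0=s_0(q)$ large enough that Lemma~\ref{lem:scalar} gives $C_{\ref{lem:scalar}}/\sqrt{s_0}\le q^{-3}$.

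\textbf{Step 1: residuals and greedy selection.} Start with $I=\emptyset$ and $J=\emptyset$. At each stage, for every unselected row $i$, form its residual $\rho_i$: the part of $B_i$ not in the span of the selected rows $B_I$. To keep coefficients bounded, I will express $B_i$ in terms of the selected rows through a maximal-minor choice, exactly as in the proof of \eqref{eq:tubecount}. Here $B_I$ is restricted to a set of pivot coordinates $P_I$ with $|P_I|=|I|$, and the maximal $|I|\times|I|$ minor is taken, so all interpolation coefficients are at most $1$ in modulus.

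If some residual has at least $s_0$ large entries off the pivot set, I select that row, add it to $I$, and add its pivot coordinate to $P_I$. Otherwise I stop. At termination I put into $J$ all large entries of all residuals, excluding pivots. This gives
\begin{align*}
|J|\le s_0m,\qquad p'=|J|+h\le(s_0+1)m.
\end{align*}
The bound $p'\le p$ holds because $T$ has full row rank. That rank claim needs checking: the pivots are outside $J$, and the selected rows are independent on the pivots.

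With $U$ the right inverse built from this minor, each row of $B-B'T$ is a residual with its $J$-entries removed. Its entries are at most $\tau$, times the bounded coefficient growth, so $\norm{B-B'T}_{2\to\infty}\le\sqrt p\,\tau\cdot\mathrm{poly}$. The $(m+p)^{5/2}\varepsilon$ bound should come out once $C_0$ and the polynomial factors are tracked. For the second error bound I write
\begin{align*}
B'\psi(x)-f(x)=B'(\psi(x)-Tx)+(B'T-B)x+(Bx-f(x)).
\end{align*}
Here $\psi(x)-Tx=(0,f_I(x)-B_Ix)$ has size at most $\varepsilon$, and $\norm{x}_\infty$ is bounded by a constant depending on $E$. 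This gives the $(m+p)^3\varepsilon$ bound. Since $\varepsilon\le c(m+p)^{-4}$, the total error is below $\Delta/2$, so $\psi(x)$ determines $f(x)\in E^m$ exactly.

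\textbf{Step 2: the density bound.} It suffices to show that every fiber of $\psi$ on $\mathcal S$ has size at most $q^{p-|J|-3h}$. Then
\begin{align*}
|\psi(\mathcal S)|\ge q^{p-v}q^{-(p-|J|-3h)}=q^{p'-(v-2h)},
\end{align*}
and $2h\le v$ follows from $|\psi(\mathcal S)|\le q^{p'}$.

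To bound a fiber, fix $x_J$ and $f_I(x)$, and reveal the selected rows in reverse order of selection. When row $i$ was selected, the constraint $f_i(x)=\text{given value}$ is equivalent to the following: its residual applied to the not-yet-fixed coordinates lies within $O((m+p)\varepsilon)$ of a determined point. The residual has at least $s_0$ entries of size greater than $\tau>2\cdot O((m+p)\varepsilon)/\Delta$ on coordinates not yet fixed. Lemma~\ref{lem:scalar} then gives conditional probability at most $q^{-3}$. Multiplying over the $h$ selections bounds the fiber by $q^{p-|J|}q^{-3h}$.

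\textbf{Main obstacle.} I expect the main difficulty to be ordering the conditioning so that, for each selected row, its $s_0$ large residual coordinates are genuinely still free. They must not lie in $J$, which is formed afterwards from other rows' residuals, and must not be fixed by later selections. I would try first to recompute residuals relative to all pivots and $J$ at the end, and if necessary iterate: put newly large coordinates into $J$, recheck, and select again. Each round either adds a row, which happens at most $m$ times, or stops. Alongside this, the bounded-coefficient interpolation must keep every error polynomial in $m+p$, which is what dictates the exponent $4$ in the hypothesis on $\varepsilon$.
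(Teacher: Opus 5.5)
\textbf{Fiber bound.} Your reduction from a fiber bound to $|\psi(\mathcal S)|\ge q^{p'-(v-2h)}$ and $2h\le v$ matches the paper, and so does your decomposition of $B'\psi(x)-f(x)$. The trouble is that the fiber bound $q^{p-|J|-3h}$ is false for your selection rule, so no ordering of the conditioning can rescue Step~2. A residual can have many large entries and still, given the earlier selected constraints, fix only one coordinate.

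Here is a counterexample. Take $s=s_0$, $m=h\ge2$ and $p=h-1+2s$. Let $A,A'$ be disjoint blocks of size $s$ in the last $2s$ coordinates, and set $w=\sum_{j<h}e_j+\sum_{j\in A}e_j-\sum_{j\in A'}e_j$. Use rows $B_1=w$ and $B_l=w+2\tau e_{l-1}$ for $2\le l\le h$, with $\mathcal S=\{x\in E^p:x_1=\cdots=x_{h-1}=0,\ \sum_{j\in A}x_j=\sum_{j\in A'}x_j\}$ and $f\equiv0$. Then $Bx=f(x)$ exactly on $\mathcal S$ and $|\mathcal S|\ge q^{s}$, so the hypothesis holds with $v=h-1+s$.

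Select in index order, resolving ties in the maximal minors by smallest index. The residual of $B_l$ against $B_1,\ldots,B_{l-1}$ is then $2\tau(\sum_{j<l}e_j-w)$, which has modulus $2\tau>\tau$ on $A\cup A'$. So every row is selected, $J=\emptyset$ and $h=m$. The single fiber $\{f_I=0\}$ is all of $\mathcal S$, of size at least $q^{p-h+1-s}$. This exceeds $q^{p-3h}$ once $2h+1>s$, and $2h\le v$ fails for $h\ge s$. The reason is that, conditionally on $B_1x\approx0$, each later constraint only fixes $x_{l-1}$ and costs $q^{-1}$, not $q^{-3}$.

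Elimination zeroes only one pivot coordinate per earlier row. It therefore cannot produce the disjoint fresh blocks of $s_0$ coordinates that a sequential scalar Littlewood--Offord argument needs. Your proposed fix of moving overlapping large coordinates into $J$ also fails in general: rows can have of order $q^{2v}$ large entries, so $p'\le Cm$ breaks.

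The missing ingredient is a genuinely $h$-dimensional small ball estimate tied to singular values. The paper lets $h$ be the number of singular values above $\delta$ of the remaining columns $B_R$, with top left singular space $H$. It proves $\PP(\norm{P_H(B_Rb_R-y)}_2\le\rho\sqrt h)\le q^{-3h}$ from $L$ disjoint column blocks of size $\lceil h/2\rceil$ with least singular value at least $\kappa$, via Gaussian majorization, Fourier inversion and H\"older. When such blocks cannot be found, fewer than $Lh$ columns go into $J$ and $\delta$ doubles, which halves the number of significant singular values. Only $O(\log m)$ doublings occur, so $|J|<2Lm$ and $\delta\le2(m+1)\delta_0$.

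\textbf{Coefficient bound.} Separately, you attribute the bounded coefficients to the wrong maximal-minor fact. Choosing pivot columns $P_I$ to maximize $|\det B_{I,P_I}|$ bounds how the non-pivot columns of $B_I$ are expressed through the pivot columns. It does not bound the row coefficients $c_i=B_{i,P_I}B_{I,P_I}^{-1}$. These are exactly the entries of $B'$ acting on the $f_I$ coordinates, so they multiply $f_I(x)-B_Ix$. They also inflate the tolerance $(1+\norm{c_i}_1)\varepsilon$ in your fiber argument.

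For example, suppose $\mathcal S\subset\{x_1=0\}$, the selected row $B_{i_1}$ has exactly $s_0$ nonzero entries with its strictly largest at coordinate $1$, and $B_i=ce_1$ with $c$ large. Then row $i$ is never selected and $f_i\equiv0$. On $\mathcal S$ one gets $B_i'\psi(x)-f_i(x)=c_i(f_{i_1}(x)-B_{i_1}x)$ with $c_i=c/B_{i_1,1}$ arbitrary. So the $(m+p)^3\varepsilon$ bound fails whenever $f_{i_1}(x)\neq B_{i_1}x$ for some $x\in\mathcal S$.

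The paper obtains $|D_{\ell i}|\le1$ by choosing $I$ to maximize the row volume of $B_R$. It has this freedom because its anticoncentration is a property of $P_HB_R$, independent of which $h$ rows are recorded.
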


\begin{proof}
We first obtain anticoncentration from several disjoint rectangular
column blocks. Repeatedly halving the number of singular values above the current
threshold then either finds these blocks or removes only $O_q(m)$ columns. Rows of maximal volume provide the reduction with bounded regression coefficients.

\smallskip
\noindent\textit{A small ball estimate.}
A related replication argument for vector anti-concentration
appears in the work of Ferber, Jain and Zhao~\cite{FJZ} on
Hadamard matrices. We include the short block version needed here. Fix an even integer $L$, to be chosen using only $q$. Suppose an
$h\times p$ matrix $D$ contains $L$ disjoint blocks $G_\ell$,
each with $j=\lceil h/2\rceil$ columns and
$s_{\min}(G_\ell)\ge\kappa>0$. For $b$ uniform on $E^p$,
Gaussian majorization followed by Fourier inversion gives
\begin{align*}
\PP\left(\norm{Db-y}_2\le\rho\sqrt h\right)
 \le e^{h/2}\int_{\R^h}
       \prod_i|\varphi(\langle D_{\cdot i},t\rangle/\rho)|\,d\gamma_h(t),
\end{align*}
where $\varphi(t)=\EE e^{itb_1}$ and $\gamma_h$ is standard
Gaussian measure. Discard unselected factors and apply H\"older
with $L$ equal exponents to the disjoint blocks. For each block,
expanding the even power $|\varphi|^L$ and integrating the Gaussian
gives
\begin{align*}
\int\prod_{i\in G_\ell}|\varphi(\langle D_{\cdot i},t\rangle/\rho)|^L
       \,d\gamma_h(t)
 =\EE e^{-\norm{G_\ell\zeta}_2^2/(2\rho^2)}
 \le\prod_{i=1}^j\EE e^{-\kappa^2\zeta_i^2/(2\rho^2)}.
\end{align*}
The $\zeta_i$ are independent, each a sum of $L/2$ independent
differences of independent random variables uniformly distributed on $E$. Lemma~\ref{lem:scalar} gives
$\Lc(\zeta_i,\Delta/4)\le C/\sqrt L$. Partitioning the real
line into intervals of length $\Delta/2$, and summing the
suprema of the Gaussian on these intervals, shows that
\begin{align*}
\EE e^{-\zeta_i^2/(2a^2)}\le C/\sqrt L
 \qquad(0<a\le c\Delta).
\end{align*}
The sum of these Gaussian suprema is bounded by an absolute
constant in this range. Thus the preceding small ball probability
is at most $e^{h/2}(C/\sqrt L)^j$ when
$\rho\le c\Delta\kappa$. Fix $L=L(q)$ large enough that
this is at most $q^{-3h}$ for every $h\ge1$.

\smallskip
\noindent\textit{Choosing the coordinates and rows.}
Set $\delta_0=C'\varepsilon\sqrt{mp}$, where $C'$ is large
enough for the preceding small ball estimate. Initially $J$ is
empty and the working threshold is $\delta_0$. At any stage let
$B_R$ be the remaining columns, and let $h$ be the number of its
singular values greater than the current threshold $\delta$.
If $h=0$, stop. Otherwise let $H$ be its top $h$ left singular
space; then $\norm{(I-P_H)B_R}\le\delta$.

Try to remove, temporarily, $L$ disjoint groups of
$j=\lceil h/2\rceil$ columns. As long as the $j$th singular value
of the remaining matrix exceeds $2\delta$, its projection onto
$H$ has $j$th singular value greater than $\delta$.
Project further onto its top $j$ left singular space and choose
$j$ columns maximizing determinant. If this square matrix is
$D_0$, write the projected matrix as $D=D_0V$. Replacing column $i$ of
$D_0$ by a projected column $D_{\cdot\ell}$ multiplies its
determinant by $V_{i\ell}$. Maximality of the absolute determinant
therefore gives $|V_{i\ell}|\le1$. There are at most $p$ such
columns, so $\norm V^2\le\norm V_{\mathrm{HS}}^2\le jp$.
It follows that $DD^T\preceq jpD_0D_0^T$, and
$s_{\min}(D_0)\ge\delta/\sqrt{jp}$.
The same lower bound holds before the last projection.

If all $L$ groups are found, retain them among the remaining
columns and stop. If the procedure fails, add the groups already
removed at this stage permanently to $J$, double $\delta$, and
start again. At failure the $j$th singular value is at most $2\delta$.
Thus, after doubling the threshold, the number of singular values
above it is at most
$\lceil h/2\rceil-1<h/2$. The ranks encountered are bounded by
$m,m/2,m/4,\ldots$, and at a stage of rank $h$ fewer than $Lh$
columns are added to $J$. Their total is less than $2Lm$.
There are at most $\lceil\log_2(m+1)\rceil$ failures, so the
threshold is at most $2(m+1)\delta_0$. Consequently
\begin{align*}
|J|\le2Lm,\qquad \delta\le2(m+1)\delta_0.
\end{align*}
At termination either $h=0$, or $P_HB_R$ contains the $L$
blocks just constructed, with least singular values at least
$\delta/\sqrt{jp}$.

For $h>0$, choose $h$ rows $I$ of the actual matrix $B_R$
maximizing their row volume; they are independent. Regress every
row on these rows by orthogonal projection, writing
$B_R=DB_{I,R}+E_0$. Let $w_1,\ldots,w_h$ be the selected rows and write an arbitrary
row as $\sum_iD_{\ell i}w_i+e_\ell$, where $e_\ell$ is orthogonal
to their span. In the wedge product obtained by replacing $w_i$,
only the term $D_{\ell i}w_i$ and the orthogonal term $e_\ell$
survive. They are orthogonal in the exterior power. Thus the squared
replacement volume is at least $|D_{\ell i}|^2\vol(w_1,\ldots,w_h)^2$.
The maximizing property bounds it by the original squared volume,
and hence $|D_{\ell i}|\le1$. Also
\begin{align*}
\norm{(E_0)_{\ell\cdot}}_2\le\sqrt{h+1}\,\delta.
\end{align*}
For a row outside $I$, append it to the maximizing rows.
The Gram determinant of these $h+1$ rows is the maximal squared
volume in dimension $h$ times its squared distance to their span. Its smallest
eigenvalue is at most $\delta^2$, because $B_R$ has at most $h$
singular values above $\delta$. The determinant is at most
$\delta^2$ times the $h$th elementary symmetric function of its
eigenvalues, which is the sum of its $h+1$ squared row minors.
Each is at most the maximal squared volume. This proves the claimed row bound; rows in $I$ have zero residual.

\smallskip
\noindent\textit{The reduced system.}
Set $Tx=(x_J,B_Ix)$ and
$B'=(B_J-DB_{I,J},D)$. Full row rank follows by restricting the
selected rows to $R=J^c$. Set $A=B_{I,R}$.
The map
\begin{align*}
U(z,w)_J=z,\qquad
 U(z,w)_R=A^T(AA^T)^{-1}(w-B_{I,J}z)
\end{align*}
is determined by the recorded rows and satisfies $TU=I$.
Orthogonality of the regression residual gives
$B_RA^T(AA^T)^{-1}=D$, and hence $BU=B'$.
Multiplication on $J$ then gives $B_J-DB_{I,J}$, the other block
of $B'$. The residual $B-B'T$ is zero on $J$ and equals $E_0$ on $R$.
For $h=0$ take $I=\emptyset$, $B'=B_J$, and let $U$ be coordinate
inclusion; the residual has norm at most $\delta$. These formulas
also allow $p'=0$. They prove the residual estimate,
$p'\le p$ and $p'\le C_{\ref{lem:compression}}m$ with the integer $C_{\ref{lem:compression}}=2L+1$.

Since the last $h$ coefficients of each row of $B'$ have absolute
value at most one, the row bound gives, for $x\in\mathcal S$,
\begin{align*}
\norm{B'\psi(x)-f(x)}_\infty
 \le(h+1)\varepsilon+C\sqrt p\sqrt{h+1}\,\delta
 \le C(m+p)^3\varepsilon.
\end{align*}
Choose the constant $c_{\ref{lem:compression}}$ in the hypothesis so that the last
quantity is less than $\Delta/4$. Two points with the same
$\psi$ then have the same entire label $f$.

Fix a fiber $\psi(x)=(z,w)$, and denote its common label by
$f_0$. Condition on the coordinates $b_J=z$ of a uniform vector
$b\in E^p$; the remaining coordinates stay independent and
uniform on $E$. Every point of the fiber satisfies
\begin{align*}
\norm{P_H(B_Rb_R-(f_0-B_Jz))}_2\le\varepsilon\sqrt m.
\end{align*}
If $h>0$, apply the block estimate with
$\rho=\varepsilon\sqrt{m/h}$ and
$\kappa=\delta/\sqrt{jp}$. Their ratio is at most $1/C'$,
so the fiber contains at most $q^{p-|J|-3h}$ points.
For $h=0$ this bound is just the size $q^{p-|J|}$ of a coordinate
fiber. Therefore
\begin{align*}
|\psi(\mathcal S)|\ge q^{|J|+3h-v}=q^{p'-(v-2h)}.
\end{align*}
Its image lies in $E^{p'}$, so comparison with $q^{p'}$ gives
$2h\le v$, completing the proof.
\end{proof}

\begin{proof}[Proof of Proposition~\ref{prop:approxcount}]
There is only one matrix if $d=0$, so assume $d\ge1$.
Write $L=C_{\ref{lem:compression}}$, which depends only on $q$,
and let $\varepsilon_0$ be the error in the statement.
Throughout the recursion, the rows being counted remain the original
rows of $Y$ in $E^r$. Recording selected rows determines the new
coordinate maps; the accompanying decrease in density loss pays
for this record.

Fix the ambient row length $r$. A node of the encoding has $m$
row labels $a\in E^r$, fixed matrices
$F\in\R^{p\times r}$ and $R\in\R^{r\times p}$ with $FR=I_p$,
and coefficient row $b=aR$. The admitted labels satisfy
\begin{align*}
\norm{a-aRF}_\infty\le\rho.
\end{align*}
The coefficient matrix has a set of at least $q^{p-v}$ approximate
solutions in $E^p$ at error $\varepsilon$. The state consists of the row count $m$, input dimension $p$,
density loss $v$, interpolation error $\varepsilon$, label error $\rho$,
and the two maps $F,R$. Initially
$F=W$, $R=W^{-1}$, $m=d$, $p=r$, $v=u$, $\rho=0$,
and $\varepsilon=\varepsilon_0$, with the solution set in the proposition.

As long as $\rho\le\Delta/[4(r+1)]$, the number of labels
with label error at most $\rho$ whose coefficient rows lie in a fixed subspace of dimension $\ell$ is at most $q^\ell$, by \eqref{eq:tubecount} applied
to its image under multiplication by $F$. In particular there
are at most $q^p$ choices for each row label. This counts labels
even if $a\mapsto aR$ is not injective.

Apply Lemma~\ref{lem:compression} to a block of coefficient rows,
and record $J,I$ and the original labels of its $h$ selected rows.
They determine $B_I$, hence $T$ and its specified right inverse
$U$. Replace $F,R$ by $F'=TF$ and $R'=RU$.
Then $F'R'=I_{p'}$, and the new coefficient row for the same label
is $aR'=bU$. The new density loss is at most $v-2h$.
Every row of $F'$ is either an old row of $F$ or a row
$aRF$ for a selected label. Thus, while all label errors are at
most one, the entries of $F$ are bounded in terms of $E$.
In particular, $\norm F\le CK$.
The two error estimates in Lemma~\ref{lem:compression}
therefore give
\begin{align*}
\varepsilon'\le C(K+1)^5\varepsilon,
 \qquad \rho'\le\rho+C(K+1)^5\varepsilon.
\end{align*}
For the second assertion, use the identity
\begin{align*}
a-aR'F'=a-aRF+(b-bUT)F.
\end{align*}
The residual row $b-bUT$ is bounded by the compression lemma,
and multiplication by $F$ costs at most $CK$. There is
no multiplication by $R$ in this error estimate. Although $R$ initially
is $W^{-1}$ and may have arbitrarily large norm, only $FR=I$,
the bounded entries of $F$, and the residual in the displayed identity
are used. The same facts hold at every child. This accounts for the
uniformity in $W$.

There are at most $\lceil\log_2 K\rceil+2$ reductions on a branch:
one possible initial reduction, followed by division of the row
set into two halves at each stage. Iterating the error bounds
shows that both errors are at most
$e^{C\log^2(K+1)}\varepsilon_0$. Choose
$C'_{\ref{prop:approxcount}}$ large enough that
\begin{align*}
e^{C\log^2(K+1)}\varepsilon_0
 \le\min\left\{c_{\ref{lem:compression}}K^{-4},\frac{\Delta}{4(K+1)},1\right\}.
\end{align*}
Induction down each branch now verifies the error bounds and the hypotheses for the tube count at every node, uniformly in $W$ and the encoded matrix.

We bound the number of label arrays at a node with
$p\le Lm$ by
\begin{align}
\exp\{C[m\log(m+1)+m(v+1)]\}.
 \label{eq:recursivecount}
\end{align}
For $p=0$ each row has at most one admitted label by
\eqref{eq:tubecount}; for $m=1$ there are at most $q^{L}$.
For $m\ge2$, split the row indices into fixed blocks of sizes
$m_1=\lfloor m/2\rfloor$ and $m_2=\lceil m/2\rceil$.
The parent's approximate solution set satisfies the constraints of each block.
Compress each block and record its selected data. For block $i$
the number of records is at most
\begin{align*}
2^{p+m_i}q^{p h_i},\qquad 0\le2h_i\le v.
\end{align*}
The reduced input dimension is at most $Lm_i$, and its density loss
is at most $v-2h_i$. Applying the inductive bound and using
$p\le Lm\le3Lm_i$, the logarithm of the count is at most
\begin{align*}
C'm+\max_{h_1,h_2}\sum_{i=1}^2
 \left[C'm_i h_i+C m_i\log(m_i+1)+Cm_i(v-2h_i+1)\right].
\end{align*}
The choices of $h_i$ are included in the records of row sets; allowing
an additional factor $(m+1)^2$ also costs only $O(m)$.
Since
\begin{align*}
m\log(m+1)-\sum_i m_i\log(m_i+1)\ge m\log(4/3),
\end{align*}
choosing $C\log(4/3)\ge C'$ and $2C\ge C'$ proves
\eqref{eq:recursivecount}. Compression has changed the coordinate maps,
but each child still counts the original labels assigned to its block.
Joining the two arrays recovers the parent's rows in their original
order, and at the root recovers $Y$ exactly.

If initially $r>Ld$, compress the whole matrix once. Recording
$J,I$ and $h\le u/2$ original rows costs at most
$2^{r+d}q^{rh}$, whose logarithm is $O_q(K(u+1))$.
The reduced matrix has at most $Ld$ inputs and density loss at most
$u-2h$, so \eqref{eq:recursivecount} applies. If $r\le Ld$,
apply it directly. Empty row or input sets have at most one
possible matrix. This proves the proposition.
\end{proof}

We next prepare the choice of bases. A shifted projection estimate
gives many prefixes with controlled conditioning, which we then complete by
maximizing volume. The same projection estimate will also be used
in the Fourier integral in Section~\ref{sec:averaging}.

\begin{lemma}\label{lem:projection}
Let $b$ be uniform on $E^s$, $P$ an orthogonal projection of rank
$h\ge1$, and $v\in\R^s$ deterministic. There are positive constants
$c_{\ref{lem:projection}}=c_{\ref{lem:projection}}(E)$ and $c'_{\ref{lem:projection}}=c'_{\ref{lem:projection}}(E)$ such that
\begin{align*}
\PP\left(\norm{P(b-v)}_2^2\le c_{\ref{lem:projection}}h\right)\le2e^{-c'_{\ref{lem:projection}}h}.
\end{align*}
\end{lemma}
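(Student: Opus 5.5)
The plan is to reduce to centered variables and apply the Hanson--Wright inequality \cite{HW}. Write $\mu=\EE b_1$ and $\sigma^2=\operatorname{Var}(b_1)>0$; both depend only on $E$, and $\sigma^2>0$ because $q\ge2$. Set $w=v-\mu\one_s$ and $X=b-\mu\one_s$, so that $X$ has independent, mean-zero coordinates bounded by $\operatorname{diam}(E)$, hence uniformly subgaussian with constant depending only on $E$. Then
\begin{align*}
\norm{P(b-v)}_2^2=\norm{PX-Pw}_2^2
 =\norm{PX}_2^2-2\langle PX,Pw\rangle+\norm{Pw}_2^2 .
\end{align*}
The first step is to control the quadratic term. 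We have $\EE\norm{PX}_2^2=\sigma^2\operatorname{tr}P=\sigma^2h$. Hanson--Wright with $\norm{P}_{\mathrm{HS}}^2=h$ and $\norm P=1$ gives
\begin{align*}
\PP\left(\norm{PX}_2^2\le\sigma^2h/2\right)\le 2\exp\{-c\min(h,h)\}=2e^{-ch}.
\end{align*}

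The second step handles the shift, which I expect to be the only real point. We cannot simply drop $Pw$, since the cross term may be negative. Consider two cases.

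\emph{Case $\norm{Pw}_2^2\ge 4\sigma^2h$.} The cross term $\langle X,Pw\rangle$ is a sum of independent bounded mean-zero variables with subgaussian norm $O_E(\norm{Pw}_2)$. If $\norm{P(b-v)}_2^2\le c h$ with $c\le\sigma^2$, then by the triangle inequality $\norm{PX}_2\ge\norm{Pw}_2-\sqrt{ch}\ge\norm{Pw}_2/2$. It is cleaner to argue via the inner product. From $\norm{PX-Pw}_2^2\le ch$ one gets
\begin{align*}
\langle X,Pw\rangle\ge \tfrac12\left(\norm{Pw}_2^2-ch\right)\ge \tfrac14\norm{Pw}_2^2 ,
\end{align*}
because $\norm{PX}_2^2\ge0$. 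Hoeffding then bounds this probability by $\exp\{-c\norm{Pw}_2^2\}\le e^{-c'h}$.

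\emph{Case $\norm{Pw}_2^2<4\sigma^2h$.} Expand
\begin{align*}
\norm{PX-Pw}_2^2\ge \norm{PX}_2^2-2\langle X,Pw\rangle .
\end{align*}
On the complement of the first-step event, $\norm{PX}_2^2>\sigma^2h/2$. By Hoeffding, $\langle X,Pw\rangle\le\sigma^2h/8$ except with probability $\exp\{-ch^2/\norm{Pw}_2^2\}\le e^{-c'h}$. Together these give $\norm{P(b-v)}_2^2\ge\sigma^2h/4$.

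Choosing $c_{\ref{lem:projection}}=\sigma^2/4$ and adjusting constants so that the total bound is at most $2e^{-c'_{\ref{lem:projection}}h}$ finishes the argument. The factor $2$ can be kept by shrinking $c'$, since the sum of three terms of the form $Ce^{-ch}$ is at most $2e^{-c''h}$ for $h\ge h_0$. For $h<h_0$ the bound $2e^{-c'h}\ge1$ holds trivially once $c'$ is small enough.

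A cleaner alternative is to apply Hanson--Wright in its form for non-centered quadratic forms. Another is Talagrand's concentration for the convex Lipschitz function $b\mapsto\norm{P(b-v)}_2$, which concentrates around its median within $O_E(1)$ with Gaussian tails. Its second moment is
\begin{align*}
\EE\norm{P(b-v)}_2^2=\sigma^2h+\norm{Pw}_2^2\ge\sigma^2h ,
\end{align*}
so the median is at least $\sigma\sqrt h-O(1)$. Falling below $\sqrt{ch}$ then costs $e^{-ch}$ directly, for $h$ large; small $h$ is again absorbed in the constants. I would write the Talagrand version, since it avoids the case split.
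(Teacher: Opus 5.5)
Your argument is correct, but it handles the shift $v$ differently from the paper. The paper never centers by the mean. It takes an independent copy $b'$ and observes that if both $b$ and $b'$ satisfy $\norm{P(b-v)}_2^2\le(\sigma^2/4)h$, then the triangle inequality gives $\norm{P(b-b')}_2^2\le\sigma^2h$. Since $b-b'$ has independent, centered, bounded coordinates of variance $2\sigma^2$, a single application of Hanson--Wright bounds this event by $2e^{-ch}$. Independence then gives $p^2\le2e^{-ch}$, that is, $p\le\sqrt2\,e^{-ch/2}$.

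This symmetrization removes $v$ in one line. It needs no Hoeffding step and no case split on $\norm{Pw}_2$. It also needs no separate absorption of small $h$, since the constant $\sqrt2\le2$ holds directly for every $h\ge1$.

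Your case split also works as written: Hanson--Wright controls $\norm{PX}_2^2$, Hoeffding controls the cross term $\langle X,Pw\rangle$, and the threshold is $\norm{Pw}_2^2$ versus $4\sigma^2h$. The same holds for your Talagrand variant, including the choice $c=\sigma^2/4$ (which the paper also uses) and the final constant adjustment to reach the factor $2$. In exchange for the extra length, your routes give information the lemma does not need. The case split gives a bound that improves like $e^{-c\norm{Pw}_2^2}$ for large shifts. The Talagrand version gives $O_E(1)$ concentration of $\norm{P(b-v)}_2$ around its median.
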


\begin{proof}
Let $\sigma^2>0$ be the variance of a random variable uniform on $E$.
The difference $b-b'$ of independent copies has independent,
centered, bounded coordinates of variance $2\sigma^2>0$.
The Hanson--Wright inequality (Theorem~1.1 in \cite{HW}), using
$\norm P=1$ and $\norm P_{\mathrm{HS}}^2=h$, gives
$\PP\left(\norm{P(b-b')}_2^2\le\sigma^2h\right)\le2e^{-ch}$
for a constant $c>0$ depending only on $E$. Set
$c_{\ref{lem:projection}}=\sigma^2/4$ and $c'_{\ref{lem:projection}}=c/2$. If both copies satisfy
$\norm{P(b-v)}_2^2\le c_{\ref{lem:projection}}h$, then their difference satisfies
$\norm{P(b-b')}_2^2\le4c_{\ref{lem:projection}}h=\sigma^2h$. Independence gives
\begin{align*}
\PP\left(\norm{P(b-v)}_2^2\le c_{\ref{lem:projection}}h\right)
 \le\sqrt{2}\,e^{-ch/2}\le2e^{-c'_{\ref{lem:projection}}h},
\end{align*}
as required.
\end{proof}

We shall need many bases rather than one basis with controlled conditioning.
Their multiplicity permits the corresponding slab indicators to be
averaged before integration. The dependence on the smallest singular
value of an available basis is confined to $O_E(u+1)$ completion
directions, where $u$ denotes the density loss.

\Needspace{5\baselineskip}
\begin{lemma}\label{lem:densebases}
Fix $a\in E\setminus\{0\}$. Suppose
$\mathcal S\subset E^r$ spans $\R^r$, contains $a\one_r$, and
$|\mathcal S|\ge q^{r-u}$, where $u\ge0$.
There is a constant $C_{\ref{lem:densebases}}>0$ depending only on $E$ such that, with
\begin{align*}
m=(r-1-\lceil C_{\ref{lem:densebases}}(u+1)\rceil)_+,\qquad g=r-1-m,
\end{align*}
at least $e^{-C_{\ref{lem:densebases}}r}|\mathcal S|^m$ ordered bases
$W=(a\one_r,x_1,\ldots,x_{r-1})$ with columns in $\mathcal S$
satisfy
\begin{align}
\sup_{x\in\mathcal S}\norm{W^{-1}x}_1\le C_{\ref{lem:densebases}}(r+1)^4.
 \label{eq:barycentric}
\end{align}
If $\mathcal S$ contains a basis $Q$ with
$s_{\min}(Q)\ge\zeta$, $0<\zeta\le1$, the same bases can be
chosen to satisfy
\begin{align}
|\det W|\ge e^{-C_{\ref{lem:densebases}}r\log(r+1)}\zeta^g,
 \qquad g\le2C_{\ref{lem:densebases}}(u+1).
 \label{eq:basisdet}
\end{align}
\end{lemma}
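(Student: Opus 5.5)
We will build the bases in two stages: a random prefix of $m$ columns drawn from $\mathcal S$, followed by a deterministic completion of $g$ columns chosen by maximizing volume.

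\emph{Stage 1: the random prefix.} Draw $x_1,\ldots,x_m$ independently and uniformly from $\mathcal S$. Set $W_j=(a\one_r,x_1,\ldots,x_j)$ and let $P_j$ be the orthogonal projection onto $\spn(W_j)^\perp$, which has rank $r-1-j\ge g$. We apply Lemma~\ref{lem:projection} with $v=0$ to $b$ uniform on $E^r$. Conditioning on $b\in\mathcal S$ costs a factor at most $q^u$, so
\begin{align*}
\PP\left(\norm{P_jx_{j+1}}_2^2\le c_{\ref{lem:projection}}(r-1-j)\right)\le2q^ue^{-c'_{\ref{lem:projection}}(r-1-j)}.
\end{align*}
The codimension satisfies $r-1-j\ge g\ge C_{\ref{lem:densebases}}(u+1)$. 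Choosing $C_{\ref{lem:densebases}}$ large relative to $1/c'_{\ref{lem:projection}}$ and $\log q$ makes each failure probability at most $e^{-c(r-1-j)}$. The sum of these over $j<m$ is bounded by a geometric series in $r-1-j\ge g$, hence is at most $1/2$. So with probability at least $1/2$ every new column has projected norm at least $\sqrt{c(r-1-j)}$. On this event $\vol(W_m)\ge e^{-Cr}\prod_j\sqrt{r-1-j}\cdot\norm{a\one_r}$. The number of good ordered prefixes is then at least $\tfrac12|\mathcal S|^m$.

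\emph{Stage 2: completion and the two bounds.} Since $\mathcal S$ spans $\R^r$, we complete $W_m$ by choosing $x_{m+1},\ldots,x_{r-1}\in\mathcal S$ that maximize $|\det W|$ among completions of the fixed prefix. Maximality with respect to the completion columns bounds the barycentric coefficients of every $x\in\mathcal S$ in those columns by $1$, by Cramer's rule. For the prefix coefficients we use instead the lower bounds on the projected norms from Stage~1. Each $x\in\mathcal S$ has $\norm x_2\le C\sqrt r$, and we solve the triangular system in a Gram--Schmidt basis. The coefficient of $x_{j+1}$ is at most $\norm{x}_2$ divided by its projected norm, plus propagated terms. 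This propagation is where polynomial control must be shown. The projected norms $\sqrt{c(r-1-j)}\ge\sqrt{cg}$ are at least a constant, so the Gram--Schmidt matrix has $s_{\min}$ of order $r^{-1}$, matching the heuristic $\Omega_E(k^{-1})$ from the overview. Its inverse then has norm $O(r)$, and the $\ell_1$ norm of the prefix coefficients is $O(r^{2})$ or better. If necessary we will bound the coefficient of $a\one_r$ separately using $\norm{a\one_r}=|a|\sqrt r$. Summing gives \eqref{eq:barycentric} with room to spare at $(r+1)^4$.

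For \eqref{eq:basisdet}, we compare the maximal completion with the basis $Q$. Select columns of $Q$ greedily whose projections onto $\spn(W_m)^\perp$ give a $g$-dimensional volume at least $\zeta^g$ times a polynomial factor; this follows from $s_{\min}(Q)\ge\zeta$ and a Cauchy--Binet/maximal minor argument. Then $|\det W|\ge\vol(W_m)\cdot e^{-Cr\log(r+1)}\zeta^g$, and $\vol(W_m)\ge e^{-Cr}$. The remaining claim $g\le2C_{\ref{lem:densebases}}(u+1)$ holds by the definition of $m$.

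\emph{Counting and the main obstacle.} Each good prefix yields one basis, and distinct prefixes give distinct ordered bases, so we obtain at least $\tfrac12|\mathcal S|^m\ge e^{-Cr}|\mathcal S|^m$ of them. The main obstacle is the $\ell_1$ coefficient bound for the prefix columns. Lower bounds on the projected norms do not immediately give bounded coefficients, because error terms can accumulate through the triangular solve. If the Gram--Schmidt estimate fails, the fallback is to require in Stage~1 the stronger event that $s_{\min}(W_m)\ge c/r$. We would obtain this from the same projection estimate applied to all coordinate-normalized directions, through a second moment argument weighted by the determinant, as the overview indicates.
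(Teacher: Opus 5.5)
Your Stage~1 is correct as far as it goes. Conditionally on the earlier columns, Lemma~\ref{lem:projection} and the density $q^{-u}$ give $\norm{P_jx_{j+1}}_2^2\ge c(r-1-j)$ for all $j<m$ on at least half of the prefixes, so $\vol(W_m)\ge e^{-Cr}$. Comparing the maximal-volume completion with a greedy completion from $Q$ then yields \eqref{eq:basisdet}. The gap is in \eqref{eq:barycentric}. Your step ``the projected norms are at least a constant, so the Gram--Schmidt matrix has $s_{\min}$ of order $r^{-1}$'' is false. Lower bounds on the diagonal entries $R_{jj}$ of the triangular factor control only the volume $\prod_jR_{jj}$, not $\norm{R^{-1}}$. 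For example, let $R$ be the upper triangular $m\times m$ matrix with unit diagonal and entries $-(j-1)^{-1/2}$ above the diagonal in column $j$. Its columns have norm at most $\sqrt2$, and each is at distance exactly $1$ from the span of the preceding ones. Yet $R\alpha=e_m$ forces $\alpha_i=(1+i^{-1/2})\alpha_{i+1}$ for $i\le m-2$, so $\norm{R^{-1}}\ge e^{c\sqrt m}/\sqrt m$. Rescaled by $\sqrt r/2$ and placed in $\one_r^\perp$, this configuration satisfies everything your inference uses. Hence the prefix coefficients of $W^{-1}x$ are not controlled. This is exactly the ``propagation'' you flag, and your fallback only names a method without carrying it out. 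Nor is the repair routine. With $P$ the projection onto $\one_r^\perp$, $Z=(Px_1,\ldots,Px_m)$ and $G=Z^TZ$, the quantity that governs $s_{\min}$ is $\operatorname{tr}G^{-1}=\sum_j\dist(Px_j,H_j)^{-2}$, where $H_j$ is the span of the \emph{other} projected columns. The projection estimate bounds each such failure only by about $e^{-cg}$. A union bound over $m\sim r$ columns cannot absorb this when $u$, and hence $g$, is bounded. Moreover, $G$ is singular with positive probability (two columns may coincide), so $\EE\operatorname{tr}G^{-1}$ is infinite.

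The missing ingredient is the paper's determinant-weighted second moment argument. Take $X$ uniform on $\mathcal S$. Markov's inequality, Lemma~\ref{lem:projection} and $|\mathcal S|\ge q^{r-u}$ show that $\Sigma=\EE[(PX)(PX)^T]$ has at most $C(u+1)$ eigenvalues below a constant $c>0$ on $\one_r^\perp$, hence at least $m$ above it. For independent copies, Cauchy--Binet gives $\EE\det G=m!\,e_m(\Sigma)$ and $\EE\operatorname{tr}\operatorname{adj}G=m!\,e_{m-1}(\Sigma)$. So under the law reweighted by $\det G/\EE\det G$, one has $\EE_*\operatorname{tr}G^{-1}\le e_{m-1}(\Sigma)/e_m(\Sigma)\le Cm$, and Markov gives $s_{\min}(Z)\ge c/\sqrt m$ with weighted probability at least $1/2$. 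Unweighting with $\det G\le(Cr)^m$ and $\EE\det G\ge m!\,c^m$ shows that these prefixes have probability at least $e^{-Cr}$. Adjoining $a\one_r$ then gives $s_{\min}\ge c/r$. After that, your Cramer-rule bound on the completion coefficients and the subtraction argument give \eqref{eq:barycentric}. Note that this produces only an $e^{-Cr}$ fraction of prefixes, not the $\tfrac12$ you claim. That is why the statement is phrased with $e^{-C_{\ref{lem:densebases}}r}|\mathcal S|^m$.
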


\begin{proof}
We first produce many prefixes using an uncentered second moment
matrix rather than a bound on every determinant. We then complete each prefix by
maximal volume, which bounds all the remaining interpolation
coordinates at once.

Let $P$ project onto $\one_r^\perp$, let $X$ be uniform on
$\mathcal S$, and let $\Sigma=\EE[(PX)(PX)^T]$ be its uncentered
second moment matrix on that space of dimension $r-1$. Use the
constants $c_{\ref{lem:projection}},c'_{\ref{lem:projection}}$ from Lemma~\ref{lem:projection}. If $h\ge1$
eigenvalues of $\Sigma$ are less than $c_{\ref{lem:projection}}/4$, let $Q_0$ project
onto their eigenspace. Markov's inequality gives
$\PP_X\left(\norm{Q_0X}_2^2\le c_{\ref{lem:projection}}h/2\right)\ge1/2$.
Under the uniform measure on $E^r$, this event has probability at least $q^{-u}/2$.
Lemma~\ref{lem:projection}, with zero shift, bounds it
by $2e^{-c'_{\ref{lem:projection}}h}$. Thus $h\le C(u+1)$, and $\Sigma$ has at
least $m$ eigenvalues at least a positive constant depending only
on $E$.

Assume $m>0$. Take independent copies $X_1,\ldots,X_m$ of $X$,
and set $Z=(PX_1,\ldots,PX_m)$ and $G=Z^TZ$.
Multilinearity of determinants and independence of the columns,
followed by Cauchy--Binet, give
\begin{align*}
\EE\det G=m!e_m(\Sigma),\qquad
 \EE\operatorname{tr}(\operatorname{adj}G)=m!e_{m-1}(\Sigma).
\end{align*}
For the first identity, expand the square of every minor of order $m$ of $Z$;
the expectations over independent columns sum to $m!$ times the matching
principal minor of $\Sigma$. For the second, apply that identity
to the $m$ Gram minors obtained by deleting one column.
Here $e_j$ is the $j$th elementary symmetric function of the
eigenvalues and $e_0=1$.

Weight the law of the tuple by $\det G/\EE\det G$.
It is supported on invertible $G$, and under this law
\begin{align*}
\EE_*\operatorname{tr}G^{-1}
 \le\frac{e_{m-1}(\Sigma)}{e_m(\Sigma)}\le C m.
\end{align*}
The first inequality allows the nonnegative adjugate contribution
of singular tuples in the preceding determinant identities. Indeed, the trace
of the adjugate of a positive semidefinite Gram matrix is the sum
of the products of all but one of its nonnegative eigenvalues,
including when the matrix is singular. For the second,
fix $m$ eigenvalues bounded below by $c$. Every product of
$m-1$ eigenvalues can be extended by at least one unused member
of this fixed set; each product of $m$ eigenvalues is counted at
most $m$ times. Hence $ce_{m-1}\le m e_m$.

Let $\mathcal G=\{s_{\min}(Z)\ge c/\sqrt m\}$. Markov's inequality
gives this event probability at least $1/2$ under the weighted law.
Returning to the original law, we obtain
\begin{align*}
\EE[\det G\,\one_{\mathcal G}]
 \ge\frac12\,\EE\det G.
\end{align*}
Since $\det G\le(Cr)^m$ and
$\EE\det G\ge m!c^m$, it follows that
\begin{align*}
\PP\left(\mathcal G\right)
 \ge\frac{m!c^m}{2(Cr)^m}\ge e^{-Cr}.
\end{align*}
The last inequality uses $m!\ge(m/e)^m$ and
$m\log(r/m)\le r/e$ for $1\le m\le r$.

Adjoin $a\one_r$ to each prefix. The resulting frame has least
singular value at least $c/r$: for
$y=a\alpha_0\one_r+\sum_{i=1}^m\alpha_iX_i$, projection onto
$\one_r^\perp$ gives $\norm{(\alpha_i)_{i\ge1}}_2\le C\sqrt m\norm y_2$,
and the component along $\one_r$ gives
$|\alpha_0|\le C(1+m)\norm y_2$.
When $m=0$, the prefix consisting of the single column $a\one_r$ already has the
claimed bound and there is one such prefix.

For each prefix, complete it with $g$ columns of $\mathcal S$
to maximize $|\det W|$, resolving ties in a fixed way.
Spanning ensures a nonzero maximum. For $x\in\mathcal S$,
Cramer's rule and replacement of a completion column show that
each of the last $g$ coordinates of $W^{-1}x$ has absolute
value at most one. Subtracting those columns leaves a vector of
norm at most $C\sqrt r(1+g)$ in the prefix span. Its coefficient
norm is at most $Cr^{3/2}(1+g)$, by the prefix bound.
Cauchy--Schwarz on the at most $r$ prefix coordinates proves
\eqref{eq:barycentric}. Different prefixes give different ordered
completed bases, proving the required multiplicity.

If the additional basis $Q$ is available, complete the prefix
greedily with columns of $Q$. At each stage take a unit vector
orthogonal to the current span. Its image under $Q^T$ has norm
at least $\zeta$, so one column of $Q$ has distance at least
$\zeta/\sqrt r$ from that span. The resulting determinant is at
least $(c/r)^{m+1}(\zeta/\sqrt r)^g$.
Completion by maximal volume preserves this lower bound. This proves
\eqref{eq:basisdet}, after adjusting constants, including $r=1$.
\end{proof}

\section{Fourier estimates for concentration functions}\label{sec:averaging}

We prove the averaging estimate used in the inversion of randomness
argument. Rescaling and rounding an incompressible normal at scale $T$
produces an integer vector in a product set at scale
$N=\Theta(T^{-1})$. Theorem~\ref{thm:averaging} shows that, throughout
the range $N\le q^n(n+1)^{-D}$, only an $e^{-\omega(n)}$ proportion
of these vectors can have concentration at least
$L_{\ref{thm:averaging}}/N$. The constant in the exponent can be
chosen arbitrarily large without changing this threshold.

We use the admissible product sets from Section~4 in \cite{Tik}. Their
three properties enter separately: the coordinate ranges allow
averaging over the coefficients; a positive proportion of coordinates
stay away from zero and control the lowest frequencies; and the product
bound $|\mathcal A|\le(KN)^n$ converts an exceptional proportion into a
count of rounded vectors. The individual cardinalities $|A_i|$
need not be $\Theta(N)$.

\Needspace{5\baselineskip}
\begin{definition}\label{def:admissible}
For positive integers $N,n$, $K\ge1$, and $\delta\in(0,1]$, a
product $\mathcal A=\prod_{i=1}^n A_i\subset\Z^n$ is
admissible with parameters $(N,n,K,\delta)$ if:
\begin{enumerate}[label=(\roman*),leftmargin=2em]
\item $A_i=-A_i$;
\item for $i>\delta n$, $A_i$ is an integer interval of size at
least $2N+1$;
\item for $i\le\delta n$, $A_i$ is the union of two integer
intervals, each of size at least $N$, and
$A_i\cap[-N,N]=\emptyset$;
\item $\prod_i|A_i|\le(KN)^n$ and $\max_{a\in A_i}|a|<nN$ for every $i$.
\end{enumerate}
\end{definition}

\begin{theorem}\label{thm:averaging}
Let $E\subset\R$ be a fixed set of $q\ge2$ points.
For fixed $\delta\in(0,1]$ and $K\ge1$, there is
$L_{\ref{thm:averaging}}=L_{\ref{thm:averaging}}(E,\delta,K)>0$
independent of $M,D$ such that for every $M\ge1$ and $D>1$,
the following holds for all
$n\ge n_{\ref{thm:averaging}}(E,\delta,K,M,D)$ and all integers
\begin{align*}
1\le N\le q^n(n+1)^{-D}:
\end{align*}
a uniform vector $X$ on any set admissible with parameters $(N,n,K,\delta)$ satisfies
\begin{align}
\PP_X\left(\Lc_b\left(\sum_{i=1}^n b_iX_i,\sqrt n\right)
                         \ge\frac{L_{\ref{thm:averaging}}}{N}\right)\le e^{-Mn}.
 \label{eq:averaging}
\end{align}
The variables $b_i$ are independent and uniformly distributed on $E$, and are independent of $X$;
$\Lc_b$ denotes concentration with respect to the variables $b_i$,
with the coefficients fixed.
\end{theorem}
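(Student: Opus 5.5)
The plan follows the outline in Section~\ref{sec:overview}. First, smooth and reduce to a density. Replace $X$ by a continuous perturbation $Z=X+\vartheta$, with $\vartheta$ uniform on $[-1/2,1/2]^n$ and independent of everything. On the scale $\sqrt n$, $\Lc_b(\sum b_iX_i,\sqrt n)$ is bounded by a constant times $\sqrt n\,\norm{F_Z}_\infty$, where $F_Z$ is the density of $\sum b_iZ_i$ convolved with a fixed smooth bump of width $\sqrt n$; this uses $\sum b_i\vartheta_i=O(\sqrt n)$ with probability bounded below. It therefore suffices to show that $\PP_Z(\norm{F_Z}_\infty>L/(N\sqrt n))\le e^{-Mn}$. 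By Fourier inversion,
\begin{align*}
F_Z(y)=\int_\R\Phi_Z(\theta)\widehat{\chi}(\theta)e^{-2\pi i y\theta}\,d\theta,
\end{align*}
with $\widehat\chi$ supported at frequencies $|\theta|\lesssim1$ (after rescaling). Split the integral into three ranges: $|\theta|\le c_0/N$, $c_0/N<|\theta|\le R_0/N$, and the far range.

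The low and intermediate ranges are routine. On $|\theta|\le c_0/N$, condition (iii) of Definition~\ref{def:admissible} gives $\delta n$ coordinates with $|Z_i|\ge N$, so $|\Phi_Z(\theta)|\le\exp(-cn N^2\theta^2)$ and the contribution is $O(1/(N\sqrt n))$. On the intermediate range, the entries of $X$ lie in intervals of length at least $N$ by (ii)--(iii). Averaging $|\varphi(Z_i\theta)|^2$ over $Z_i$ therefore gives at most $1-c$ for $|\theta|\ge c_0/N$. A fixed moment in $\theta$ followed by Markov over $Z$ then yields an exceptional probability $e^{-Mn}$, once a threshold factor $L$ is chosen independent of $M$; the proportion $\delta$ and the parameter $K$ enter only through these constants.

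The far range is the main obstacle. Write $R_Z(y)$ for its contribution and bound $\norm{R_Z}_\infty$ via a spatial $L^k$ norm, with $k$ even and $k=\Theta(n/\log n)$; smoothness of $\widehat\chi$ makes the $L^\infty$ to $L^k$ loss only polynomial. Expanding $\int|R_Z|^k$ yields an integral over $H_k$ of $\prod_i\prod_j\varphi(Z_i\theta_j)$. Expand each $\varphi$ as $q^{-1}\sum_{a\in E}$ and take the expectation over $Z_i$, majorizing the uniform density of $A_i+[-1/2,1/2]$ by smooth $\nu_i$ whose total mass is at most $C|A_i|/N$ times a normalization. This gives factors $K_i(\theta)=q^{-k}\sum_{v\in E^k}\widehat\nu_i(-\langle v,\theta\rangle)$, which are at most $q^{-k}|V(\theta)|+\text{tail}$, where $V(\theta)=\{v:|\langle v,\theta\rangle|\le R/N\}$. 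Using the product bound in (iv), $\EE_Z\int|R_Z|^k$ is then bounded by $(CK)^{n}\int_{H_k\cap\text{far}}\prod_i K_i(\theta)\,d\theta$, up to the normalization.

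Next, partition the far region by the data $(r,t)$ produced by Lemma~\ref{lem:relations}: a subspace of dimension $r=k-d$ near $V(\theta)$, with density loss $t$. On such a piece the integrand is at most $q^{-n(d+t)}$ times polynomial factors. For bounded $t$, localization makes the relations exact in a space $U$ containing no coordinate vector, since far frequencies exclude $e_j$. Then \eqref{eq:codim} forces $d\ge k/2-O(1)$, and \eqref{eq:refinedcount} counts the spaces by $e^{d\log(k+1)+O(k)}$. For each space, change variables along a basis from Lemma~\ref{lem:densebases}; the slab $\{|\langle v,\theta\rangle|\le R/N\}$ has volume about $(R/N)^{r}$ per unit in the complementary directions. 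The total is then $\exp\{d\log(k+1)-d\log(q^n/N)+O(k)\}$. With $N\le q^n(n+1)^{-D}$ and $k=Cn/\log n$, this is at most $\exp\{-(D-1)(k/2)\log n+O(k)\}\le e^{-2Mn}(L/(N\sqrt n))^k$ for large $C$. For large $t$, cover by slabs attached to the many well-conditioned bases of Lemma~\ref{lem:densebases}, with output labels counted by Proposition~\ref{prop:approxcount}. This costs $\exp\{O(k\log k+(t+1)\log^2k)\}$, which is absorbed by the extra kernel saving $q^{-nt}$ because $n\gg\log^2k$. Summing over $r,t$ and applying Markov to $\int|R_Z|^k$ gives the far-range bound. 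The delicate bookkeeping is the Jacobian and slab-volume computation in the bounded-$t$ case, which must preserve the coefficient one in front of $d\log(k+1)$.
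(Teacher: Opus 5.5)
Your outline follows the paper's architecture: perturbation and Gaussian smoothing, three frequency ranges, a moment of order $k\asymp n/\log n$, the kernels $K_i$, the $(d,t)$ partition from Lemma~\ref{lem:relations}, exactness for bounded $t$ via \eqref{eq:codim} and \eqref{eq:refinedcount}, and the weighted slab cover for large $t$. However, the integral estimate you propose for bounded $t$ does not close. That is the case that decides whether every $D>1$ is reachable.

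You bound the integrand on each region by $q^{-n(d+t)}$ and integrate the indicator of slabs $|\langle v_j,\theta\rangle|\le R/N$ after changing variables along a basis from Lemma~\ref{lem:densebases}.
\begin{itemize}
\item Combine the moment normalization $N^{k+1}n^{(k-1)/2}$ of Lemma~\ref{lem:moment}, the slab volume $(2R/N)^{r-1}$, and the cost $(C/\sqrt n)^d$ of the complementary directions. What remains is $N^{d+2}q^{-nd}(2R\sqrt n)^{r-1}$. With $R\asymp\sqrt k$ (forced by the tail $q^{-3k}$), the factor $(R\sqrt n)^{r-1}$ is $\exp\{(r-1)\log n\,(1+o(1))\}$, not $e^{O(k)}$.
\item Since $r$ can be as large as $k/2+O(1)$ (the matching example $x_{2j-1}=x_{2j}$), this loss is about $e^{(k/2)\log n}\approx e^{An/2}$. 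The gain after counting is only $e^{-(D-1)d\log(n+1)}\approx e^{-(D-1)An/2}$. Both scale linearly in $A$, so enlarging $A$ does not help; taking $(R/N)^{r-1}$ at face value you would need $D>2$.
\item Crediting an ideally conditioned relation basis with row volume about $(c\sqrt k)^{r-1}$ still leaves $n^{(r-1)/2}$, which forces $D>3/2$.
\item Lemma~\ref{lem:densebases} does not even supply that conditioning. Its bound $|\det W|\ge e^{-Cr\log(r+1)}\zeta^g$ costs a further $e^{Ck\log(k+1)}$ with an uncontrolled $C=C(E)$. Losses of size $e^{O(k\log k)}$ are affordable only when $t$ is large, where $q^{-nt}$ pays for them.
\end{itemize}

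The missing idea is the Gram integral, Lemma~\ref{lem:gramintegral}. For bounded $t$ the paper uses no slabs and no basis determinant. It writes $\theta=(z-G^T\eta,\eta)$ with $z\in H_r$, a change of variables with Jacobian one, and confines $z$ to the box $\norm z_\infty\le C_{\ref{lem:localization}}rR/N$ by Lemma~\ref{lem:localization}(i). It keeps the Gaussian weights in $s=2k$ of the kernel factors through $|K_i(\theta)|\le(1+\varepsilon)q^{-d}\Psi_{N,r}(z)$, and integrates $\Psi_{N,r}^s$ via the random Gram matrix to obtain $(C/(N\sqrt s))^{r-1}$. This gains $\sqrt s$ per relation direction over even an optimal slab. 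The leftover becomes $(n/s)^{(r-1)/2}=e^{O(k\log\log n)}=e^{o(n)}$, which yields the exponent $[\log(2q^2)-A(D-1)/2+o(1)]n$.

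Smaller points also need repair:
\begin{itemize}
\item The far cutoff must be polynomial, such as $n^2/N$, and at least $\gg kR/N$. Otherwise the localization box does not exclude coordinate vectors from $U$.
\item In the low range, $|\Phi_Z(\theta)|\le e^{-cnN^2\theta^2}$ is false in general. $|Z_i|$ may be of order $nN$ and $\varphi$ may be periodic (e.g.\ $E=\{0,1\}$). Use H\"older with \eqref{eq:phiintegral} instead.
\item In the intermediate range, $\EE|\varphi(Z_i\theta)|^2\le1-c$ only gives $e^{-cn}$. You need $\EE|\varphi(Z_i\theta)|^v\le C/\sqrt v$ with $v=v(M)$.
\item The $L^\infty$-to-$L^k$ step costs a factor of order $2^kN/\sqrt n$. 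This is exponential rather than polynomial, though it is absorbed by the choice of $A$.
\end{itemize}
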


Equivalently, at most $e^{-Mn}|\mathcal A|$ integer vectors in
$\mathcal A$ satisfy the concentration condition in \eqref{eq:averaging}.
Related averaging estimates appear in Corollary~4.3 in
\cite{Tik} and in the work of Jain, Sah and Sawhney \cite{JSS}. For uniform finite supports, the
latter applies in a range separated from $q^n$ by a fixed
exponential factor. Here the range is only a polynomial factor
below $q^n$.

For fixed coefficients, translating $E$ by one of its elements
only translates the scalar linear form. Its concentration function is
unchanged. We therefore assume $0\in E$ throughout the proof of the
averaging theorem. This does not translate the random matrix in
Theorem~\ref{thm:main}.

We pass to continuous coefficients to turn the count of integer
vectors into volume and integral estimates suited to Fourier analysis.
Adding independent bounded perturbations preserves concentration at
the $\sqrt n$ scale up to constant factors, with probability bounded below.
The construction and comparison are given in the proof below.
For the continuous estimate, let $Z_1,\ldots,Z_n$ be independent.
For $i>\lfloor\delta n\rfloor$, the variable $Z_i$ is uniform on
a bounded interval $I_i$; for $i\le\lfloor\delta n\rfloor$, it is
uniform on the union $I_i$ of two disjoint bounded intervals.
Every component interval has length at least $N$. We assume
\begin{align*}
I_i\subset[-2nN,2nN],\qquad
 \prod_{i=1}^n|I_i|\le(KN)^n,\qquad
 I_i\cap(-N/2,N/2)=\emptyset\quad(i\le\lfloor\delta n\rfloor),
\end{align*}
where $|I_i|$ denotes Lebesgue measure. Write $\mu_i$ for the
density of $Z_i$.

Let $B=\max_{a\in E}|a|$ and $\beta=1+B$, and define
\begin{align*}
f_n(x)=(2\pi\beta^2n)^{-1/2}\exp\{-x^2/(2\beta^2n)\},
 \qquad F_z(x)=\EE_b f_n(x+\langle b,z\rangle).
\end{align*}
The expectation is over independent variables
$b_1,\ldots,b_n$ uniform on $E$, with $z\in\R^n$ fixed.
For fixed $z$, $F_z$ is the density of $G-\langle b,z\rangle$,
where $G$ is an independent centered Gaussian of variance $\beta^2n$.
The next proposition therefore estimates a density obtained by Gaussian
smoothing at scale $\beta\sqrt n$; the comparison below turns its
supremum norm bound into the required concentration estimate.

\begin{proposition}\label{prop:smoothing}
For fixed $E,\delta,K$, there is a constant
$C_{\ref{prop:smoothing}}>0$ independent of $M,D$ such that
the following holds. For every $M\ge1$, $D>1$, and
$n\ge n_{\ref{prop:smoothing}}(E,\delta,K,M,D)$,
uniformly over integers $1\le N\le q^n(n+1)^{-D}$
and the continuous coefficient distributions just defined,
\begin{align*}
\PP_Z\left(\norm{F_Z}_\infty>
       \frac{C_{\ref{prop:smoothing}}}{N\sqrt n}\right)\le e^{-Mn}.
\end{align*}
\end{proposition}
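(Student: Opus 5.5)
Fourier inversion gives, for fixed $z$,
\begin{align*}
F_z(x)=\int_{\R}\widehat{f_n}(\theta)\,e^{2\pi i x\theta}\,\Phi_z(\theta)\,d\theta,
\qquad
\Phi_z(\theta)=\prod_{i=1}^n\varphi(z_i\theta),
\end{align*}
with $\widehat{f_n}(\theta)=\exp\{-2\pi^2\beta^2n\theta^2\}$. We split the integral into three ranges of $\theta$ and bound each contribution separately.

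\emph{Low frequencies, $|\theta|\le c_0/N$.} For $i\le\lfloor\delta n\rfloor$ we have $|Z_i|\ge N/2$. The variable $Z_i\theta$ therefore lies in a window where $|\varphi(Z_i\theta)|\le\exp\{-cZ_i^2\theta^2\}$, using the nondegenerate variance of a variable uniform on $E$ and choosing $c_0$ small depending on $E$. The product over these $\delta n$ coordinates is at most $\exp\{-c\delta nN^2\theta^2\}$, and its integral is $O(1/(N\sqrt{\delta n}))$. This bound holds for every $Z$ and gives the main term $C/(N\sqrt n)$.

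\emph{Intermediate frequencies, $c_0/N\le|\theta|\le\theta_1$,} where $\theta_1$ is a fixed power of $n$ over $N$, or the scale at which the far-frequency moment takes over. Here we average over $Z$ directly. For a single coordinate uniform on intervals of length $\ge N$, $\EE|\varphi(Z_i\theta)|^2$ is bounded away from $1$ uniformly once $|\theta|N\ge c_0$, because $Z_i\theta$ then equidistributes over a range of length $\gtrsim 1$ modulo the periods of $\varphi$. A fixed moment $\EE_Z\int|\Phi_Z|^{p}$ over this range is then $e^{-cn}$ times the length of the range. Markov's inequality gives an exceptional probability $e^{-Mn}$ once the deviation threshold $1/(N\sqrt n)$ is compared with $e^{-cn}$. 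This works provided $N\ge e^{-cn/2}$-type losses are absorbed. If a single moment of fixed order is too weak for large $M$, we take the order proportional to $M$.

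\emph{Far frequencies, $R_Z$.} This is the main obstacle. Following Section~\ref{sec:overview}, we bound $\norm{R_Z}_\infty$ by an even spatial moment of order $k=\Theta(n/\log n)$, with the large constant chosen in terms of $M,D$. Fubini and the spatial integral then impose $\theta_1+\cdots+\theta_k=0$, which gives an integral over $H_k$ of $\prod_i\EE_{Z_i}\prod_{j}\varphi(Z_i\theta_j)$ up to conjugations. Next we replace $\mu_i$ by a smooth majorant $\nu_i$ at scale $N$, with $\widehat\nu_i$ decaying rapidly beyond $1/N$. Expanding $\prod_j\varphi$ over $v\in E^k$ gives the kernels $K_i(\theta)=q^{-k}\sum_{v\in E^k}\widehat\nu_i(-\langle v,\theta\rangle)$, each of size about $|\nu_i|q^{-k}|V(\theta)|$ plus tails.

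We stratify $H_k$ by the approximate relation set $V(\theta)$. Using Lemma~\ref{lem:relations}, we obtain a dimension $r=k-d$ and a density loss $t$ with $|V(\theta)|\approx q^{r-t}$. The product of kernels is then at most $(q^{-d-t}\cdot\text{const})^n$, while the stratum is contained in slabs $|\langle v,\theta\rangle|\le R/N$ over a basis.

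\emph{Bounded $t$.} Localization makes $V(\theta)$ exact in a subspace $U$ containing no $e_i$, since $\theta$ is far. Theorem~\ref{thm:count}, via \eqref{eq:codim}, forces $d\ge k/2-O(1)$, and \eqref{eq:refinedcount} bounds the number of $U$ by $e^{d\log(k+1)+O(k)}$. The slab volume in the $d$ transverse directions contributes $(R/N)^{d}$, up to Jacobian factors controlled by Lemma~\ref{lem:densebases}. Balancing $q^{-dn}N^{d}$ against the count gives $\exp\{d\log(k+1)-d\log(q^n/N)+O(k)\}\le\exp\{-(D-1)d\log n+O(k)\}$, which is $e^{-(M+1)n}$ times the required power once $k\log n\gg Mn$.

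\emph{Large $t$.} We cover by bases from Lemma~\ref{lem:densebases}, with output labels counted by Proposition~\ref{prop:approxcount}. The costs $\exp\{O(k\log k+(t+1)\log^2k)\}$ are absorbed by the extra factor $q^{-nt}$.

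Summing over $d,t$ and applying Markov to the $k$th moment yields the bound $e^{-Mn}$. The delicate point I expect is making the kernel tails and the transition between the localized regime and the cover regime uniform in $N$ up to $q^n(n+1)^{-D}$.
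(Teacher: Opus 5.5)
Your architecture matches the paper's: the split $P_z+Q_z+R_z$, a $k$-th spatial moment with $k\asymp n/\log n$, majorants $\nu_i$, strata $(d,t)$, exact localization plus Theorem~\ref{thm:count} for bounded $t$, and the weighted basis cover for large $t$. The gap is in the bounded-$t$ estimate, which is exactly where every $D>1$ must be reached; as you account for it, that estimate does not close.

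\textbf{The bounded-$t$ loss.} The error term there is not $O(k)$.

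\begin{itemize}
\item Passing from $\norm{R_Z}_\infty>1/(N\sqrt n)$ to the $k$-th moment costs $N^{k+1}n^{(k-1)/2}$.
\item For a region attached to $U$, the $d$ directions along $U^\perp$ are free. Only the Gaussians $\omega_n$ control them, giving $(C/\sqrt n)^d$.
\item The relation constraints act on the other $r-1$ directions, not on $d$ of them as you wrote.
\item Bounding that part by slabs of width $2R/N$, with $R\asymp\sqrt k$, leaves $(2R\sqrt n)^{r-1}=e^{(1+o(1))(r-1)\log n}$. For $r$ close to $k/2$, which \eqref{eq:codim} permits, and $k\approx An/\log n$, this is $e^{(1+o(1))An/2}$.
\item Changing variables through the bases of Lemma~\ref{lem:densebases} adds the Jacobian \eqref{eq:realjacobian}, whose factor $e^{Ck\log(k+1)}$ is $e^{\Theta(An)}$.
\end{itemize}

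The gain $-(D-1)d\log(n+1)\approx -A(D-1)n/2$ is of the same order as these losses, so enlarging $A$ does not help. The slab loss alone confines your argument to $D>2$, and the Jacobian to $D>1+2C$. Theorem~\ref{thm:main} needs $D=1+\varepsilon$.

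\textbf{How the paper avoids both losses.}

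\begin{itemize}
\item Since $U$ is exact, it uses graph coordinates $\theta=(z-G^T\eta,\eta)$ with $z\in H_r$ and $\eta\in\R^d$. Their Jacobian is exactly one, so Lemma~\ref{lem:densebases} plays no role for bounded $t$.
\item Lemma~\ref{lem:localization}(i) confines $z$ to a box of side $O(kR/N)$.
\item It keeps the Gaussian weights in the first bound of \eqref{eq:Krelations}, writing $|K_i(\theta)|\le(1+\varepsilon)q^{-d}\Psi_{N,r}(z)$ and applying this to $s=2k$ of the factors.
\item Lemma~\ref{lem:gramintegral}, a random Gram determinant estimate, integrates $\Psi_{N,r}^s$ and gives width $C/(N\sqrt s)$ per relation direction instead of $R/N$.
\end{itemize}

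What remains is $(n/s)^{(r-1)/2}=e^{O(k\log\log n)}=e^{o(n)}$. Only $d\log(k+1)$ from \eqref{eq:refinedcount} and $-(d+2)\log(q^n/N)$ survive at order $n$.

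\textbf{Two smaller errors.}

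\begin{itemize}
\item Low range: the pointwise bound $|\varphi(Z_i\theta)|\le e^{-cZ_i^2\theta^2}$ is false. $|Z_i|$ can reach $2nN$, so $Z_i\theta$ ranges up to $2c_0n$, where $|\varphi|$ can return to $1$ (e.g.\ $E=\{0,1\}$). The bound for every $z$ instead follows from H\"older over the $\lfloor\delta n\rfloor$ coordinates and \eqref{eq:phiintegral} after substituting $t=z_i\theta$; the factor $|z_i|^{-1}$ compensates the length of the interval.
\item Intermediate range: a bound $\EE|\varphi(Z_i\theta)|^p\le 1-c$ yields at most $e^{-cn}$ for every order $p$. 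Reaching $e^{-Mn}$ requires the decaying bound $\EE|\varphi(Z_i\theta)|^p\le C/\sqrt p$ from \eqref{eq:phiintegral}, with $p=p(M)$ fixed so that $C/\sqrt p<e^{-M-1}$.
\end{itemize}
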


We first deduce the discrete averaging theorem from this estimate.

\begin{proof}[Proof of Theorem~\ref{thm:averaging} from
Proposition~\ref{prop:smoothing}]
Let $X$ be uniform on the admissible product set, let $U_i$ be
independent uniform variables on $[-1/2,1/2]$, independent of $X$,
and set $Z_i=X_i+U_i$. The variables $Z_i$ are independent and
uniform on $I_i=A_i+[-1/2,1/2]$, up to endpoints of measure zero.
Each integer interval becomes a real interval of the same measure
as its cardinality, so all the continuous support conditions hold.
For $i\le\lfloor\delta n\rfloor$, admissibility even gives
$|Z_i|\ge N+1/2$ almost surely.

Fix an integer vector $x$ and an interval $I$ of radius $\sqrt n$
attaining $\rho_x=\Lc_b(\langle b,x\rangle,\sqrt n)$. A maximizing interval
exists because the law has finite support. For each $b\in E^n$,
\begin{align*}
\EE_U|\langle b,U\rangle|^2=\frac1{12}\sum_i b_i^2\le B^2n/12.
\end{align*}
Chebyshev's inequality bounds the probability of
$|\langle b,U\rangle|>(\beta-1)\sqrt n$ by $1/12\le1/8$.
Let $I'$ have the same center as $I$ and radius $\beta\sqrt n$.
If $D(U)$ is the probability over $b$ that $\langle b,x\rangle\in I$ but
$\langle b,x+U\rangle\notin I'$, then $\EE_U D(U)\le\rho_x/8$.
Markov's inequality gives
\begin{align*}
\PP_U\left(\Lc_b(\langle b,x+U\rangle,\beta\sqrt n)\ge\rho_x/2\right)
 \ge3/4.
\end{align*}
Consequently, for every $\rho>0$,
\begin{align*}
\PP_X\left(\Lc_b(\langle b,X\rangle,\sqrt n)\ge\rho\right)
 \le\frac43\PP_Z\left(\Lc_b(\langle b,Z\rangle,\beta\sqrt n)\ge\rho/2\right).
\end{align*}
The Gaussian $f_n$ is at least
$(2\pi)^{-1/2}\beta^{-1}e^{-1/2}n^{-1/2}$ on
$[-\beta\sqrt n,\beta\sqrt n]$. Evaluating $F_z$ at the negative
of the center of any interval gives
\begin{align*}
\Lc_b(\langle b,z\rangle,\beta\sqrt n)
 \le C\sqrt n\norm{F_z}_\infty,
 \qquad C=\sqrt{2\pi}\,\beta e^{1/2}.
\end{align*}
Take $L_{\ref{thm:averaging}}=4CC_{\ref{prop:smoothing}}$ as the threshold in the averaging theorem.
The event in the last probability, with
$\rho=L_{\ref{thm:averaging}}/N$, implies
$\norm{F_Z}_\infty\ge2C_{\ref{prop:smoothing}}/(N\sqrt n)>C_{\ref{prop:smoothing}}/(N\sqrt n)$.
Choose
$n_{\ref{thm:averaging}}(E,\delta,K,M,D)
\ge n_{\ref{prop:smoothing}}(E,\delta,K,M+1,D)$
and apply Proposition~\ref{prop:smoothing} with exponent $M+1$.
The resulting bound is $(4/3)e^{-(M+1)n}\le e^{-Mn}$.
The choice $L_{\ref{thm:averaging}}=4CC_{\ref{prop:smoothing}}$ depends only on $E,\delta,K$.
\end{proof}

\subsection{Decomposition of the Fourier integral}\label{sec:decomposition}

We use the Fourier transform convention
$\widehat f(t)=\int_\R e^{-2\pi itx}f(x)\,dx$.
Set
\begin{align*}
\omega_n(\theta)=\widehat f_n(\theta)
   =e^{-2\pi^2\beta^2n\theta^2},\qquad
 \varphi(t)=q^{-1}\sum_{a\in E}e^{2\pi iat},\qquad
 \Phi_z(\theta)=\prod_{i=1}^n\varphi(z_i\theta).
\end{align*}
We use $n^{2}$ as a convenient cutoff between the intermediate and far frequency ranges. Its precise value is immaterial; we only need a sufficiently large polynomial cutoff whose logarithm is $o(n)$. Fourier inversion gives
\begin{align*}
F_z(x)
 &=q^{-n}\sum_{b\in E^n}f_n(x+\langle b,z\rangle) =\int_\R e^{2\pi ix\theta}\omega_n(\theta)
          \prod_{i=1}^n\left(q^{-1}\sum_{a\in E}e^{2\pi iaz_i\theta}\right)
          \,d\theta\\
&=P_z(x)+Q_z(x)+R_z(x),
\end{align*}
where
\begin{align*}
P_z(x)&=\int_{|\theta|\le N^{-1}}
       e^{2\pi ix\theta}\omega_n(\theta)\Phi_z(\theta)\,d\theta,\\
Q_z(x)&=\int_{N^{-1}<|\theta|\le n^{2}/N}
       e^{2\pi ix\theta}\omega_n(\theta)\Phi_z(\theta)\,d\theta,\\
R_z(x)&=\int_{|\theta|>n^{2}/N}
       e^{2\pi ix\theta}\omega_n(\theta)\Phi_z(\theta)\,d\theta.
\end{align*}
All three integrals converge absolutely. We estimate the three
terms in separate propositions.

\begin{proposition}\label{prop:low}
There is a constant $C_{\ref{prop:low}}>0$ depending only on
$E,\delta$ such that for
$n\ge n_{\ref{prop:low}}(E,\delta)$ and every
$z$ in the support of $Z$,
\begin{align*}
\norm{P_z}_\infty\le\frac{C_{\ref{prop:low}}}{N\sqrt n}.
\end{align*}
\end{proposition}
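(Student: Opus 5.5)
The plan is to bound $|P_z(x)|$ by the integral of $|\Phi_z(\theta)|$ over $|\theta|\le N^{-1}$, discarding $\omega_n\le1$ and the phase $e^{2\pi ix\theta}$. The aim is to show that $|\Phi_z(\theta)|\le\exp\{-c\,n N^2\theta^2\}$ for $|\theta|\le N^{-1}$, with $c>0$ depending only on $E,\delta$. Integrating this Gaussian bound over $\R$ then gives
\begin{align*}
\norm{P_z}_\infty\le\int_\R e^{-cnN^2\theta^2}\,d\theta=\sqrt{\pi/(cn)}\,N^{-1},
\end{align*}
which is the claimed $C/(N\sqrt n)$.

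To obtain the Gaussian bound, only the first $\lfloor\delta n\rfloor$ coordinates are used; for the others we bound $|\varphi|\le1$. For $i\le\lfloor\delta n\rfloor$, the support condition gives $N/2\le|z_i|\le2nN$. The product $z_i\theta$ ranges over $|z_i\theta|\le 2n$, so it is not small, and the key point is to handle this range.

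A one-dimensional fact is needed: for $|s|$ small, say $|s|\le s_0(E)$, we have $|\varphi(s)|\le e^{-c_0 s^2}$. This follows from $|\varphi(s)|^2=1-4\pi^2\mathrm{Var}(b)s^2+O(s^4)$, since $\mathrm{Var}(b)>0$ when $q\ge2$. For $|z_i\theta|$ in the range $[N|\theta|/2,\,2nN|\theta|]$, however, $z_i\theta$ may be near a period of $\varphi$ (if $E$ is lattice-like), so a pointwise bound fails.

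My first attempt is to replace the crude bound by one that uses the interval structure. Each $I_i$ for $i\le\delta n$ is a union of two intervals of length at least $N$, and a Lebesgue-positive proportion of points lies near $\pm N/2$ to $\pm cN$. But $z$ is a fixed point of the support, not averaged, so this does not help directly. The lower bound $|z_i|\ge N/2$ is the only constraint; the upper bound $|z_i|\le 2nN$ permits $|z_i\theta|$ up to $2n$.

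This is the main obstacle: near-periodicity when $z_i\theta$ lands near the zeros of $1-|\varphi|$. The fallback is to exploit that $|\varphi(s)|=1$ only on a discrete set (empty apart from $0$ if $E$ is non-lattice). One would then need a counting argument over $\theta$, showing that for most $\theta\in[-1/N,1/N]$ many coordinates have $z_i\theta$ bounded away from that set. This, however, requires averaging over $z$ rather than holding for every $z$.

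Given the statement demands every $z$ in the support, the intended reading is presumably that admissibility also bounds $|z_i|\le C N$ for the selected coordinates, or else that $\theta$ is small enough that $|z_i\theta|\le s_0$. If the proof fails on this point, I would revisit whether the constraint $I_i\cap(-N/2,N/2)=\emptyset$ is paired with an upper bound of order $N$ on those components.
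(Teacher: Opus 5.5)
Your reduction to $\int_{|\theta|\le N^{-1}}|\Phi_z(\theta)|\,d\theta$ is the right first step. The core claim, however, is false for fixed $z$. You want the pointwise bound $|\Phi_z(\theta)|\le e^{-cnN^2\theta^2}$ on $|\theta|\le N^{-1}$, and the proposal never replaces it with anything that works. For $E=\{0,1\}$ one has $|\varphi(s)|=|\cos\pi s|$. The support may contain $z=2N\one_n$: take $I_i=[N,2N]\cup[-2N,-N]$ for $i\le\lfloor\delta n\rfloor$ and $I_i=[N,2N]$ otherwise, with $K\ge2$. At $\theta=1/(2N)$ every factor of $\Phi_z$ has modulus one, while your bound would give $e^{-cn/4}$. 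Here $|z_i|\le 2N$, so the extra hypothesis you contemplate (an upper bound of order $N$ on the selected coordinates) would not rescue the pointwise approach. No extra hypothesis is needed, since the statement is true as written. Your remark that handling near-periodicity ``requires averaging over $z$'' is also where the argument goes astray. The averaging is over $\theta$, and it can be done one coordinate at a time for every fixed $z$.

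The missing idea is to control only the integral and to decouple the coordinates by H\"older's inequality. Fix distinct $a,a'\in E$ and set $h=a-a'$; then $|\varphi(t)|\le e^{-c\norm{ht}_{\T}^2}$. Summing over periods of $t\mapsto\norm{ht}_{\T}$ gives $\int_I|\varphi(t)|^v\,dt\le Cv^{-1/2}(|I|+|h|^{-1})$ for every interval $I$ and $v\ge2$. The near-periodic points you worried about occupy only a proportion $O(v^{-1/2})$ of each period. With $m=\lfloor\delta n\rfloor$, H\"older with $m$ equal exponents gives
\begin{align*}
\int_{|\theta|\le 1/N}|\Phi_z(\theta)|\,d\theta
 \le\prod_{i=1}^m\left(\int_{|\theta|\le 1/N}|\varphi(z_i\theta)|^m\,d\theta\right)^{1/m}.
\end{align*}
The substitution $t=z_i\theta$ bounds each inner integral by $|z_i|^{-1}Cm^{-1/2}(2|z_i|/N+|h|^{-1})\le C/(N\sqrt m)$. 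The Jacobian $|z_i|^{-1}$ cancels the length of the $t$-interval, so coordinates as large as $2nN$ cause no harm. The lower bound $|z_i|\ge N/2$ is used only for the boundary term $|h|^{-1}$. Since $m\ge\delta n/2$ for large $n$, this yields $\norm{P_z}_\infty\le C_{\ref{prop:low}}/(N\sqrt n)$ with $C_{\ref{prop:low}}$ depending only on $E,\delta$.
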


\begin{proof}
Fix distinct $a,a'\in E$ and set $h=a-a'\ne0$. If $b,b'$ are
independent and uniform on $E$, then
\begin{align*}
|\varphi(t)|^2
 =\EE e^{2\pi i(b-b')t}
 \le 1-\frac{2}{q^2}\left(1-\cos(2\pi ht)\right)
 \le e^{-c\norm{ht}_{\T}^2}.
\end{align*}
After changing the constant, the same bound holds for
$|\varphi(t)|$. Splitting an interval $I$ into periods of
$t\mapsto\norm{ht}_{\T}$ and using
$\int_{-1/2}^{1/2}e^{-cv u^2}\,du\le Cv^{-1/2}$ gives, for
$v\ge2$,
\begin{align}
\int_I|\varphi(t)|^v\,dt
 \le\frac{C}{\sqrt v}\left(|I|+|h|^{-1}\right).
 \label{eq:phiintegral}
\end{align}
Set $m=\lfloor\delta n\rfloor$. Since $|\varphi|\le1$, H\"older's
inequality gives
\begin{align*}
\int_{|\theta|\le1/N}|\Phi_z(\theta)|\,d\theta
 \le\prod_{i=1}^m
 \left(\int_{|\theta|\le1/N}
        |\varphi(z_i\theta)|^m\,d\theta\right)^{1/m}.
\end{align*}
For $i\le m$, admissibility gives $|z_i|\ge N/2$. Changing variables
$t=z_i\theta$ and using \eqref{eq:phiintegral},
\begin{align*}
\int_{|\theta|\le1/N}|\varphi(z_i\theta)|^m\,d\theta
 \le\frac{C}{N\sqrt m}.
\end{align*}
It follows that
\begin{align*}
\int_{|\theta|\le1/N}|\Phi_z(\theta)|\,d\theta
 \le\frac{C_{\ref{prop:low}}}{N\sqrt n}.
\end{align*}
Since $0<\omega_n\le1$, the same bound holds for
$\norm{P_z}_\infty$.
\end{proof}

\begin{proposition}\label{prop:intermediate}
For every $M\ge1$ and $n\ge n_{\ref{prop:intermediate}}(E,M)$,
\begin{align*}
\PP_Z\left(\norm{Q_Z}_\infty>\frac1{N\sqrt n}\right)\le e^{-Mn}.
\end{align*}
\end{proposition}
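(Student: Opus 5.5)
Since $|e^{2\pi ix\theta}|=1$, we have the deterministic bound
\begin{align*}
\norm{Q_z}_\infty\le\int_{N^{-1}<|\theta|\le n^2/N}|\Phi_z(\theta)|\,d\theta .
\end{align*}
So it suffices to show that this integral is at most $1/(N\sqrt n)$ outside an event of probability $e^{-Mn}$. The plan is a fixed moment bound in $Z$ followed by Markov's inequality. Write
\begin{align*}
Y=\int_{N^{-1}<|\theta|\le n^2/N}|\Phi_Z(\theta)|\,d\theta .
\end{align*}
We bound $\EE_Z Y^{p}$ for a fixed even integer $p$, chosen later in terms of $M$. The target is a bound of the form
\begin{align*}
\EE_Z Y^p\le \bigl(C n^{3}/N\bigr)^p e^{-c_0 n}
\end{align*}
with $c_0$ comparable to $p$. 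Markov's inequality then bounds $\PP(Y>1/(N\sqrt n))$ by $(Cn^{3.5})^p e^{-c_0n}$, and this is at most $e^{-Mn}$ once $c_0\ge 2M$ and $n$ is large.

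\textbf{The moment computation.} By Hölder's inequality over the interval $J=\{N^{-1}<|\theta|\le n^2/N\}$, which has length at most $2n^2/N$,
\begin{align*}
Y^p\le |J|^{p-1}\int_J|\Phi_Z(\theta)|^p\,d\theta .
\end{align*}
Independence of the coordinates then gives
\begin{align*}
\EE_Z Y^p\le|J|^{p-1}\int_J\prod_{i=1}^n\EE_{Z_i}|\varphi(Z_i\theta)|^p\,d\theta .
\end{align*}
It remains to bound each factor $\EE_{Z_i}|\varphi(Z_i\theta)|^p$ for fixed $\theta$ with $|\theta|>1/N$. Each component interval of $I_i$ has length $\ell\ge N$. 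On such an interval the variable $t=Z_i\theta$ is uniform on an interval of length $\ell|\theta|\ge1$. Applying \eqref{eq:phiintegral} after the change of variables gives an average of $|\varphi|^p$ bounded by
\begin{align*}
\frac{C}{\sqrt p}\bigl(1+(\ell|\theta||h|)^{-1}\bigr)\le \frac{C_E}{\sqrt p}.
\end{align*}
Since $\mu_i$ is a mixture of at most two uniform densities on such intervals, we get $\EE_{Z_i}|\varphi(Z_i\theta)|^p\le C_E/\sqrt p$ for every $i$ and every $\theta\in J$. Hence
\begin{align*}
\EE_ZY^p\le |J|^p\,(C_E/\sqrt p)^n .
\end{align*}
Choosing $p$ even with $\log(\sqrt p/C_E)\ge 2M+1$, the right side is at most $(2n^2/N)^p e^{-(2M+1)n}$.

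\textbf{Conclusion and main check.} With this choice of $p$,
\begin{align*}
\PP\Bigl(Y>\frac1{N\sqrt n}\Bigr)\le (N\sqrt n)^p\,\EE Y^p\le (2n^{2.5})^p e^{-(2M+1)n}\le e^{-Mn}
\end{align*}
for $n\ge n_{\ref{prop:intermediate}}(E,M)$, because $p$ depends only on $E$ and $M$. The only delicate point is the uniformity of the bound $\EE|\varphi(Z_i\theta)|^p\le C_E/\sqrt p$ near the lower cutoff $|\theta|\approx 1/N$. There the interval in $t$ has length only about $1$, so I must check that the term $|h|^{-1}$ in \eqref{eq:phiintegral} stays bounded. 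It does, since $h$ is a fixed nonzero difference of elements of $E$, so the bound is fine. Nothing in this argument uses the restriction $N\le q^n(n+1)^{-D}$: the cutoff $n^2/N$ makes the frequency range only polynomially long, and a fixed moment beats that.
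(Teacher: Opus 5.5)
Your proposal is correct and follows the paper's proof essentially step for step. Both arguments use the bound $\norm{Q_Z}_\infty\le\int_J|\Phi_Z|$, H\"older with a fixed moment, and independence together with \eqref{eq:phiintegral} to get $\EE|\varphi(Z_i\theta)|^p\le C_E/\sqrt p$ on each component interval, followed by Markov's inequality with $p$ fixed in terms of $E,M$. Evenness of $p$ is not needed, and your choice $\log(\sqrt p/C_E)\ge 2M+1$ is simply a more generous version of the paper's condition $\log(C/\sqrt v)<-M-1$.
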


\begin{proof}
Let $I_N=\{\theta:1/N\le|\theta|\le n^{2}/N\}$ and
\begin{align*}
J(Z)=\int_{I_N}|\Phi_Z(\theta)|\,d\theta.
\end{align*}
For every fixed integer $v\ge2$, H\"older's inequality yields
\begin{align*}
J(Z)^v\le |I_N|^{v-1}
 \int_{I_N}\prod_{i=1}^n|\varphi(Z_i\theta)|^v\,d\theta.
\end{align*}
For $\theta\in I_N$, applying \eqref{eq:phiintegral} on each interval
supporting $Z_i$ gives
\begin{align*}
\EE|\varphi(Z_i\theta)|^v\le C/\sqrt v.
\end{align*}
The $Z_i$ are independent, and hence
\begin{align*}
\EE J(Z)^v
 \le |I_N|^v\left(\frac{C}{\sqrt v}\right)^n
 \le\left(\frac{2n^{2}}{N}\right)^v
      \left(\frac{C}{\sqrt v}\right)^n.
\end{align*}
Markov's inequality now gives
\begin{align*}
\PP\left(J(Z)>\frac1{N\sqrt n}\right) \le (N\sqrt n)^v\EE J(Z)^v \le(2n^{5/2})^v \left(\frac{C}{\sqrt v}\right)^n.
\end{align*}
Given $M$, choose a fixed $v$ with
$\log(C/\sqrt v)<-M-1$. Since
$v\log(2n^{5/2})=o(n)$, choose
$n_{\ref{prop:intermediate}}(E,M)$ so that
$v\log(2n^{5/2})\le n$ above this threshold. The last bound
is then at most $e^{-Mn}$. Finally,
$\norm{Q_Z}_\infty\le J(Z)$.
\end{proof}

\begin{proposition}\label{prop:far}
For every $M\ge1$ and $D>1$, for
$n\ge n_{\ref{prop:far}}(E,M,D)$ and all integers
$1\le N\le q^n(n+1)^{-D}$,
\begin{align*}
\PP_Z\left(\norm{R_Z}_\infty>\frac1{N\sqrt n}\right)\le e^{-Mn}.
\end{align*}
\end{proposition}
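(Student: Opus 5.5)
We follow the outline in Section~\ref{sec:overview}: Fourier inversion, then a high even spatial moment, then averaging over $Z$. The first step is to bound the supremum by a moment. Since $R_z$ is the inverse Fourier transform of $g_z=\omega_n\Phi_z\one_{|\theta|>n^2/N}$, we have $\norm{R_z}_\infty\le\norm{g_z}_1$. This is too crude, so instead we choose an even $k=\lceil C_0n/\log n\rceil$, with $C_0=C_0(E,M,D)$ large, and estimate $\int_\R|R_z(x)|^k\,\gamma(x)\,dx$ against a smooth bump $\gamma$ of width comparable to $\beta\sqrt n$. The supremum is then controlled by this moment, because $R_z$ is band-limited after we multiply by a Gaussian in $\theta$. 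Concretely, the Lipschitz constant of $R_z$ is at most $\int|\theta|\omega_n$, which is polynomial, so $\norm{R_z}_\infty^k$ is bounded by $n^{O(k)}$ times the $L^k$ norm over a window, and $n^{O(k)}=e^{O(n)}$ after we fix $C_0$.

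Expanding the $k$-th power with $\gamma$ replaced by its Fourier transform, which is concentrated near $0$, gives an integral over $\theta\in\R^k$ localized near $H_k$. Taking $\EE_Z$ and using independence gives $\prod_i\EE_Z\prod_{j}\varphi(\pm Z_i\theta_j)$. We majorize $\mu_i$ by smooth densities $\nu_i$ that are at most $C/N$ on $[-2nN,2nN]$, for instance a Gaussian of width $nN$ scaled by $|I_i|^{-1}$. Using $\prod_i|I_i|^{-1}\le$ volume constraints together with $\prod|I_i|\le (KN)^n$, this yields the kernels $K_i(\theta)=q^{-k}\sum_{v\in E^k}\widehat\nu_i(-\langle v,\theta\rangle)$. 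The common factor $\prod_i(\cdot)$ carries $C^n N^{-n}\prod|I_i|$-type normalizations, which are bounded by $e^{O(n)}$. Each $K_i(\theta)$ is at most $C|V(\theta)|q^{-k}+$ a tail, where $V(\theta)=\{v:|\langle v,\theta\rangle|\le R/N\}$. The target is $\EE\norm{R_Z}^k\le e^{-(M+1)n}(N\sqrt n)^{-k}$, after which Markov's inequality finishes.

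We then decompose the far region $\{\theta\in H_k:|\theta_j|>n^2/N\}$ according to the approximate relation set $V(\theta)$, as in Lemma~\ref{lem:relations}. This gives a dimension $r$, codimension $d=k-r$, and density loss $t$ with $|V|\approx q^{r-t}$, so the kernel product is at most $(q^{-d-t}C)^n$. Note that the far restriction $|\theta_j|>n^2/N\gg R/N$ forbids $e_j\in V$.

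\emph{Bounded $t$.} Localization makes the relations exact in a subspace $U$ containing no coordinate vector. By \eqref{eq:codim}, since the density is at least $q^{-t-1}$, we get $d\ge k/2-O(1)$. By \eqref{eq:refinedcount}, the number of possible $U$ is at most $\exp\{d\log(k+1)+O(k)\}$. For a fixed $U$, the $\theta$-region is a slab of width $R/N$ in $r$ directions, with volume roughly $(R/N)^{r}(n^{O(1)}/\ldots)$ in the remaining $d$ directions, bounded via the Gaussian $\omega_n$. Combining the kernel gain $q^{-nd}$ against $N^{-k}$ leaves the factor $\exp\{d\log(k+1)-d\log(q^n/N)+O(k+n)\}\le\exp\{-(D-1)d\log n+O(n)\}$. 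Because $d\gtrsim n/\log n\cdot C_0/2$, this is at most $e^{-(M+2)n}$ once $C_0$ is large.

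\emph{Large $t$.} Here we cover the region using bases from Lemma~\ref{lem:densebases}, with output labels counted by Proposition~\ref{prop:approxcount}. The change of variables costs $\exp\{O(k\log k+(t+1)\log^2k)\}$, which is absorbed by $q^{-nt}$.

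Finally we sum over $r$ and $t$, which costs only a polynomial factor. The main obstacle is the bounded-$t$ volume computation, where exactly coefficient one in front of $d\log(k+1)$ must survive. In particular, the Jacobian for an exact graph with bounded coefficients must be $e^{O(k)}$, and the slab widths must be matched precisely to $N^{-1}$ versus $q^{-n}$.
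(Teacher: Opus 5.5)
Your outline follows the paper's architecture: a high even moment with $k\asymp n/\log n$, the averaged kernels $K_i$, classification of $V(\theta)$ by $(d,t)$, the exact count via \eqref{eq:refinedcount} and \eqref{eq:codim} for bounded $t$, and a cover by bases for large $t$. However, the step you yourself flag as the main obstacle is left unresolved, and the estimate you sketch for it fails. In the bounded-$t$ case you bound the kernel product pointwise by $q^{-n(d+t)}$. You bound the region for a fixed $U$ by a slab of width $R/N$ in the $r-1$ constrained directions, with a factor $(C/\sqrt n)^d$ from $\omega_n$ in the free directions. Against the normalization $N^{k+1}n^{(k-1)/2}$ the powers of $N$ combine with $q^{-nd}$ as you say. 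Your $(N\sqrt n)^k$ has the same effect. But a factor $(2R\sqrt n)^{r-1}=\exp\{(r-1)(\log n+O(\log\log n))\}$ is left over, since $R\asymp\sqrt k$. Because \eqref{eq:codim} only gives $d\ge k/2-O(1)$, $r-1$ can be about $k/2$, so this loss is $\exp\{(1+o(1))\frac k2\log n\}$. The gain is only $\exp\{-(D-1)d\log n\}\approx\exp\{-(D-1)\frac k2\log n\}$. Both are linear in $k$, so enlarging $C_0$ does not help: the scheme closes only for $D>2$, whereas Theorem~\ref{thm:main} needs $D=1+\varepsilon$. This same slab loss, of size $e^{O(An)}$, is harmless in the large-$t$ case only because the threshold $T$ is chosen after $A$.

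The missing idea is to use the Gaussian weights inside $K_i$, not just $|V(\theta)|$.
\begin{itemize}
\item In graph coordinates $\theta=(z-G^T\eta,\eta)$ with $z\in H_r$ and $\eta\in\R^d$, the Jacobian is exactly one and $\langle(b,Gb),\theta\rangle=\langle b,z\rangle$. Hence on $\Omega_{d,t}(U)$ one has $|K_i(\theta)|\le(1+\varepsilon)q^{-d}\Psi_{N,r}(z)$.
\item Apply this to $s=2k$ of the $n$ factors, keeping $q^{-d-t}$ on the rest. Integrating $\Psi_{N,r}^s$ over the box $|z_j|\le CrR/N$ from Lemma~\ref{lem:localization}(i), using the random Gram determinant bound of Lemma~\ref{lem:gramintegral}, gives $(C/(N\sqrt s))^{r-1}$. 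This is an effective width $1/(N\sqrt s)$ per direction instead of $R/N$.
\item The leftover is then $(n/s)^{(r-1)/2}=e^{O(k\log\log n)}=e^{o(n)}$. Only at this point is your bound $\exp\{d\log(k+1)-d\log(q^n/N)+O(n)\}$ justified.
\item The same box is what excludes coordinate vectors from $U$, not merely from $V$: if $e_j\in U$ then $\theta_j=z_j$ and $|z_j|\le n^2/N$.
\end{itemize}

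Two further steps are, as written, lossy in a way that is also fatal.
\begin{itemize}
\item Passing from $\norm{R_z}_\infty$ to a moment must cost only $2^k$ times a single factor $(n\lambda)^{-1}$, as in Lemma~\ref{lem:moment}. The moment should be taken over all of $\R$, since a fixed window does not locate the supremum. A loss of $n^{O(k)}=e^{\Theta(C_0n)}$ scales with $C_0$ like the gain and cannot be absorbed when $D$ is near $1$.
\item A Gaussian of width $nN$ dominating $\one_{I_i}/|I_i|$ must have mass of order $nN/|I_i|$. The total normalization is then at least $(cn/K)^n$, precisely because $\prod_i|I_i|\le(KN)^n$. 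The majorants must instead be adapted to each $I_i$, with $\mu_i\le2\nu_i$ and $|\widehat\nu_i(t)|\le e^{-aN^2t^2}$.
\end{itemize}
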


We prove Proposition~\ref{prop:far} in the next subsection.
Assuming it for now completes the continuous averaging estimate.

\begin{proof}[Proof of Proposition~\ref{prop:smoothing} assuming
Proposition~\ref{prop:far}]
Set $C_{\ref{prop:smoothing}}=C_{\ref{prop:low}}+2$.
This choice depends only on $E,\delta$ and is made before $M,D$.
For prescribed $M,D$, apply Propositions~\ref{prop:far} and~\ref{prop:intermediate} with
exponent $M+1$.
Choose $n_{\ref{prop:smoothing}}(E,\delta,K,M,D)$ at least as large as
$n_{\ref{prop:low}}(E,\delta)$,
$n_{\ref{prop:far}}(E,M+1,D)$, and
$n_{\ref{prop:intermediate}}(E,M+1)$.
Outside the union of the two exceptional events,
\begin{align*}
\norm{F_Z}_\infty \le\norm{P_Z}_\infty+\norm{Q_Z}_\infty+\norm{R_Z}_\infty \le\frac{C_{\ref{prop:low}}+2}{N\sqrt n}.
\end{align*}
The union bound gives exceptional probability at most
$2e^{-(M+1)n}\le e^{-Mn}$. This proves the proposition.
\end{proof}

\subsection{High moments and the far frequency estimate}\label{sec:highmoment}

Write $\mathfrak m=\{\theta:|\theta|>n^{2}/N\}$. For $r\ge1$, define
$H_r=\{z\in\R^r:\sum_jz_j=0\}$, with the measure obtained by
eliminating one coordinate. In the chart
$z_r=-\sum_{j<r}z_j$, this is $dz_1\cdots dz_{r-1}$.
The measure is invariant under coordinate permutations, and an
integral in dimension zero equals one.

The following lemma converts the supremum bound into a high moment.
The moment is taken before absolute values are introduced into the
Fourier integral, so the relations between the frequencies are retained.

\begin{lemma}\label{lem:moment}
For each coefficient density $\mu_i$, there is a probability
density $\nu_i$ such that
\begin{align}
\mu_i\le2\nu_i,\qquad
 |\widehat\nu_i(t)|\le e^{-aN^2t^2}\quad(t\in\R),
 \label{eq:domination}
\end{align}
with an absolute constant $a>0$. Define
\begin{align*}
K_i(\theta)=q^{-k}\sum_{v\in E^k}\widehat\nu_i(-\langle v,\theta\rangle).
\end{align*}
There is a constant $C_{\ref{lem:moment}}>0$ depending only on $E$
such that for every even integer $k\ge2$,
\begin{align}
\PP_Z\left(\norm{R_Z}_\infty>\frac1{N\sqrt n}\right) \le 2^n C_{\ref{lem:moment}}^k N^{k+1}n^{(k-1)/2} \int_{H_k\cap\mathfrak m^k} \prod_{j=1}^k\omega_n(\theta_j) \prod_{i=1}^n|K_i(\theta)|\,d\theta.
 \label{eq:normalizedmoment}
\end{align}
\end{lemma}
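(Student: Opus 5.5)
The plan has three steps: construct the majorants $\nu_i$ explicitly, convert $\norm{R_Z}_\infty$ into an $L^k$ norm in $x$, and compute the expected $k$-th moment by Fourier expansion after comparing $\mu_i$ with $\nu_i$. I have not matched the exact powers of $N$ and $n$ in \eqref{eq:normalizedmoment}; see the last paragraph.

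\emph{Construction of $\nu_i$.} Each $\mu_i$ is $|I_i|^{-1}\one_{I_i}$, where $I_i$ has at most two component intervals, each of length at least $N$. Fix a small absolute $\epsilon>0$ and let $I_i^+$ be the $\epsilon N$-neighbourhood of $I_i$, so that $|I_i^+|\le(1+4\epsilon)|I_i|$. Let $g$ be the centered Gaussian density with standard deviation $\sigma=\epsilon^2N$, and set
\begin{align*}
\nu_i=|I_i^+|^{-1}\,\one_{I_i^+}*g .
\end{align*}
For $x\in I_i$ we have $(\one_{I_i^+}*g)(x)\ge\PP(|G|\le\epsilon N)\ge 1-\epsilon$. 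Hence $\nu_i(x)\ge(1-\epsilon)(1+4\epsilon)^{-1}|I_i|^{-1}\ge\mu_i(x)/2$. On the Fourier side, $|\widehat{\one}_{I_i^+}|\le|I_i^+|$, so $|\widehat\nu_i(t)|\le\widehat g(t)=e^{-2\pi^2\epsilon^4N^2t^2}$. This gives \eqref{eq:domination} with $a=2\pi^2\epsilon^4$.

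\emph{From the supremum to an $L^k$ norm.} Since $f_n=f_{n/2}*f_{n/2}$, we have $\omega_n=\omega_{n/2}^2$. So $R_z=f_{n/2}*\widetilde R_z$, where $\widetilde R_z$ is defined like $R_z$ but with $\omega_{n/2}$ in place of $\omega_n$. H\"older's inequality with exponents $k$ and $k'=k/(k-1)$, together with $\norm{f_{n/2}}_{k'}\le(C/\sqrt n)^{1/k}$, gives
\begin{align*}
\norm{R_z}_\infty^k\le C^k n^{-(k-1)/2}\int_\R|\widetilde R_z(x)|^k\,dx .
\end{align*}
(Any fixed split of $\omega_n$ would do; the resulting weights only change constants, since $\omega_{n/2}\le C\omega_n^{1/2}$ suitably rescaled, and the statement only tracks constants to the power $k$.) Markov's inequality then reduces the probability in \eqref{eq:normalizedmoment} to $(N\sqrt n)^k$ times $\EE_Z\int|\widetilde R_Z|^k\,dx$.

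\emph{Expanding the moment.} Write $|\widetilde R|^k=\widetilde R^{k/2}\,\overline{\widetilde R}^{\,k/2}$. We have $\overline{\varphi(t)}=\varphi(-t)$, and both $\mathfrak m$ and $\omega$ are symmetric, so we may substitute $\theta_j\mapsto-\theta_j$ in the conjugated factors. Integrating over $x$ then produces the delta constraint $\sum_j\theta_j=0$, which yields an integral over $H_k\cap\mathfrak m^k$ with the stated chart measure. The key point is that $\int|\widetilde R_Z|^k\,dx$ is a nonnegative function of $Z$. Therefore $\mu_i\le2\nu_i$ gives
\begin{align*}
\EE_{\mu}\int|\widetilde R_Z|^k\le 2^n\,\EE_{\nu}\int|\widetilde R_Z|^k .
\end{align*}
This is where the factor $2^n$ enters. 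Under the product of the $\nu_i$ we compute exactly. Expanding $\prod_j\varphi(Z_i\theta_j)=q^{-k}\sum_{v\in E^k}e^{2\pi iZ_i\langle v,\theta\rangle}$ and using independence together with Fubini, which is justified by the Gaussian weights, gives $\EE_{\nu_i}\prod_j\varphi(Z_i\theta_j)=K_i(\theta)$. Taking absolute values of the resulting real integral yields the right-hand side of \eqref{eq:normalizedmoment}.

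The main obstacle is the order of operations. The comparison $\mu_i\le2\nu_i$ may be applied only to a nonnegative functional. So we must pass to $\int|\widetilde R_Z|^k$ first, and expand it into the Fourier integral only afterwards. Putting absolute values inside earlier would destroy the constraint $H_k$, which is exactly the structure the paper needs. The remaining bookkeeping concerns the powers of $N$ and $n$. Since $N\ge1$, any spare factor $N$ is harmless, and mismatched polynomial factors in $n$ can be absorbed into $C^k_{\ref{lem:moment}}$ because $k\ge2$. I expect this bookkeeping is why the statement is written as $N^{k+1}n^{(k-1)/2}$, and I would settle it last.
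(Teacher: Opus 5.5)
Your argument has a genuine gap at the step from the supremum to an $L^k$ norm. Writing $R_z=f_{n/2}*\widetilde R_z$ gives $\widetilde R_z$ the Fourier weight $\omega_{n/2}=\omega_n^{1/2}$. Your expansion, which is otherwise correct, therefore ends with $\int_{H_k\cap\mathfrak m^k}\prod_j\omega_{n/2}(\theta_j)\prod_i|K_i(\theta)|\,d\theta$. The statement \eqref{eq:normalizedmoment} requires the integral with $\prod_j\omega_n(\theta_j)$ instead.

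The parenthetical repair is false. The ratio $\omega_{n/2}(\theta)/\omega_n(\theta)=e^{\pi^2\beta^2n\theta^2}$ is unbounded, so there is no bound $\prod_j\omega_{n/2}(\theta_j)\le C^k\prod_j\omega_n(\theta_j)$. Any factorization $\omega_n=\widehat\phi\,w$ with $\phi\in L^{k'}$, $k'=k/(k-1)$, meets the same obstruction: $\widehat\phi\in L^k$, so it cannot stay bounded below on the unbounded set $\mathfrak m$.

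The paper never splits $\omega_n$. Because $|\Phi_z|\le1$, one has $\norm{R_z'}_\infty\le\int_\R2\pi|\theta|\,\omega_n(\theta)\,d\theta=1/(\pi\beta^2n)$. So $|R_z(x_0)|>\lambda$ forces $|R_z|\ge\lambda/2$ on an interval of length of order $n\lambda$. Hence $\int_\R R_z^k\,dx\ge cn\lambda(\lambda/2)^k$, and Markov gives $\PP(\norm{R_Z}_\infty>\lambda)\le C2^k(n\lambda^{k+1})^{-1}\EE\int_\R R_Z(x)^k\,dx$. The moment is then that of $R_z$ itself, with weight $\omega_n$. At $\lambda=(N\sqrt n)^{-1}$ the prefactor is exactly $C2^kN^{k+1}n^{(k-1)/2}$. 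The extra power of $N$ is the price of this crude derivative bound, which answers your question about the normalization.

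Your bookkeeping also needs correcting. From $\norm{f_{n/2}}_{k'}\le(C/\sqrt n)^{1/k}$, H\"older gives $\norm{R_z}_\infty^k\le Cn^{-1/2}\int_\R|\widetilde R_z|^k\,dx$, not $C^kn^{-(k-1)/2}\int_\R|\widetilde R_z|^k\,dx$. As an inequality between $f_{n/2}*g$ and $g$, the latter is false for $k\ge4$: take $g$ a Gaussian bump of height one and width $\sqrt n$.

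Nor can polynomial factors in $n$ be absorbed into $C_{\ref{lem:moment}}^k$. That constant depends only on $E$, while the inequality must hold for every $n$, including with $k=2$ fixed. With the correct exponent your route would give the prefactor $CN^kn^{(k-1)/2}$, which is better than required, but still only for the weaker weight $\omega_{n/2}$.

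The other ingredients agree with the paper:
\begin{itemize}
\item Your $\nu_i$ is a valid variant. It uses one Gaussian of width $\epsilon^2N$ on the $\epsilon N$-neighbourhood, where the paper scales per component and mixes.
\item You apply $\mu_i\le2\nu_i$ only to the nonnegative functional $\int R_Z^k\,dx$, which is exactly the paper's order of operations.
\item The expansion producing $K_i$ is the same.
\end{itemize}
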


\begin{proof}
Conjugate symmetry makes $R_z$ real valued. Differentiation under
the integrable Gaussian gives
\begin{align*}
\norm{R'_z}_\infty
 \le\int_\R 2\pi|\theta|e^{-2\pi^2\beta^2n\theta^2}\,d\theta
 =\frac1{\pi\beta^2n}.
\end{align*}
Suppose
$\norm{R_z}_\infty>\lambda$ and choose $x_0$ with
$|R_z(x_0)|>\lambda$. The derivative bound implies
$|R_z(x)|\ge\lambda/2$ whenever
$|x-x_0|\le cn\lambda$. Thus, for every even $k\ge2$,
\begin{align*}
\int_{\R}R_z(x)^k\,dx
 \ge cn\lambda\left(\frac{\lambda}{2}\right)^k.
\end{align*}
Markov's inequality therefore gives
\begin{align*}
\PP\left(\norm{R_Z}_\infty>\lambda\right)
 \le\frac{C2^k}{n\lambda^{k+1}}
                  \EE\int_{\R}R_Z(x)^k\,dx.
\end{align*}

For $g_z=\one_{\mathfrak m}\omega_n\Phi_z$, we have
$g_z\in L^1\cap L^2$ and $g_z(-\theta)=\overline{g_z(\theta)}$.
Writing $k=2\ell$, Plancherel applied to $R_z^\ell$ and the
convolution of $\ell$ copies of $g_z$ gives
\begin{align*}
\int_{\R}R_z(x)^k\,dx
 =g_z^{*k}(0)
 =\int_{H_k\cap\mathfrak m^k}
     \prod_{j=1}^k\omega_n(\theta_j)
     \prod_{i=1}^n\prod_{j=1}^k\varphi(z_i\theta_j)\,d\theta.
\end{align*}
The constraint $\sum_j\theta_j=0$ accounts for $H_k$; convolution
uses exactly its measure defined by eliminating one coordinate, with no additional
normalizing factor. We now introduce auxiliary densities solely to
average this nonnegative spatial moment. We construct the densities $\nu_i$ as follows.

On an interval of length
$\ell$, convolve the uniform density on the concentric interval of
length $3\ell/2$ with a centered Gaussian of standard deviation
$\ell/16$. On the original interval this density is at least
\begin{align*}
\frac{2}{3\ell}\PP\left(|G|\le\ell/4\right)\ge\frac1{2\ell}.
\end{align*}
Its Fourier transform has modulus at most $e^{-a\ell^2t^2}$.
For two intervals, mix these densities with the original interval
weights. This proves \eqref{eq:domination}.

Since $k$ is even,
$z\mapsto\int_{\R}R_z(x)^k\,dx$ is nonnegative. Hence
\begin{align*}
\EE_\mu\int_{\R}R_Z(x)^k\,dx =\int_{\R^n}\left(\int_{\R}R_z(x)^k\,dx\right) \prod_{i=1}^n\mu_i(z_i)\,dz \le 2^n\int_{\R^n}\left(\int_{\R}R_z(x)^k\,dx\right) \prod_{i=1}^n\nu_i(z_i)\,dz
\end{align*}
by \eqref{eq:domination}. Let $\widetilde Z_i$ be independent with
densities $\nu_i$. Expanding the characteristic functions before
integrating in $\widetilde Z_i$ gives
\begin{align*}
\EE\prod_{j=1}^k\varphi(\widetilde Z_i\theta_j) =q^{-k}\sum_{(v_1,\ldots,v_k)\in E^k} \EE e^{2\pi i\widetilde Z_i(v_1\theta_1+\cdots+v_k\theta_k)} =q^{-k}\sum_{v\in E^k}\widehat\nu_i(-\langle v,\theta\rangle) =:K_i(\theta).
\end{align*}
The Gaussian product is integrable on $H_k$, so Fubini justifies these
exchanges before absolute values are taken. Finally, at
$\lambda=(N\sqrt n)^{-1}$,
\begin{align*}
\frac1{n\lambda^{k+1}}=N^{k+1}n^{(k-1)/2}.
\end{align*}
Substitution in the preceding supremum estimate gives
\eqref{eq:normalizedmoment}.
\end{proof}

We now express the kernel bounds in terms of approximate relations.
With $a$ as in \eqref{eq:domination}, set
\begin{align*}
R=\sqrt{3k\log q/a},\qquad
V(\theta)=\{v\in E^k:|\langle v,\theta\rangle|\le R/N\}.
\end{align*}
Thus $V(\theta)$ is the set of approximate relation vectors
associated with $\theta$, and $|V(\theta)|$ counts these
vectors without any requirement of linear independence.

For a uniform vector $b\in E^k$,
\begin{align*}
q^{-k}|V(\theta)|=\PP_b\left(|\langle b,\theta\rangle|\le R/N\right).
\end{align*}
Thus the normalized number of relations is a small ball probability.
Equation~\eqref{eq:domination} and $e^{-aR^2}=q^{-3k}$ give
\begin{align}
|K_i(\theta)| \le q^{-k}\sum_{v\in V(\theta)}e^{-aN^2(\langle v,\theta\rangle)^2}+q^{-3k} \le q^{-k}|V(\theta)|+q^{-3k}.
 \label{eq:Krelations}
\end{align}
We retain both bounds: the cardinality controls most factors, while
the Gaussian weights will be used to integrate the remaining factors.
The task is to estimate the frequency regions on which there are many
relations. This combination of counting and integration is analogous to using small ball estimates with a union bound over a structured net; see Section~5.2 in \cite{RV}. Here we count spaces of approximate relations in $E^k$ and integrate over the associated frequency regions.

The next lemma selects such a space without requiring a lower bound
on every nonzero determinant with entries in $E$.

\begin{lemma}\label{lem:relations}
Let $k\ge2$, $N\ge1$, and fix $\tilde{a}\in E\setminus\{0\}$.
There are constants
$C_{\ref{lem:relations}},c_{\ref{lem:relations}}>0$ depending only
on $E$ such that the following holds. Set
\begin{align*}
\eta=\exp[-C_{\ref{lem:relations}}\log^2(k+1)].
\end{align*}
For each $\theta\in H_k$, there is a subspace $U=U(\theta)$ of
dimension $1\le r\le k$, spanned by $r$ vectors from $V(\theta)$,
with the following properties:
\begin{enumerate}[label=(\roman*),leftmargin=2em]
\item $\tilde{a}\one_k\in U$ and $\dist(v,U)\le\eta$ for all
$v\in V(\theta)$.
\item After a coordinate permutation,
$U=\{(x,Gx):x\in\R^r\}$, where $|G_{ij}|\le1$ and
$G\one_r=\one_{k-r}$. The input projection is injective on
$V(\theta)$. Write $V(\theta)=\{(x,f(x)):x\in\mathcal S\}$, where
$\mathcal S\subset E^r$ contains $\tilde{a}\one_r$, and
\begin{align*}
\norm{f(x)-Gx}_\infty\le\sqrt{k+1}\,\eta\qquad(x\in\mathcal S).
\end{align*}
There is a basis $Q$ with columns in $\mathcal S$, whose corresponding
vectors in $V(\theta)$ form a basis of $U$, such that
\begin{align*}
s_{\min}(Q)\ge c_{\ref{lem:relations}}\eta/k^3.
\end{align*}
\end{enumerate}
The choices can be made measurably in $\theta$.
\end{lemma}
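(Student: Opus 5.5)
The plan follows the outline in Section~\ref{sec:realsupports}: choose the frame by maximizing scaled projected volumes over all dimensions. First note that $\tilde a\one_k\in V(\theta)$, since $\theta\in H_k$ gives $\langle\tilde a\one_k,\theta\rangle=0$. Set $\eta_j=\eta^{j}$-type scalings; concretely, for each $j\ge1$ let
\begin{align*}
\Psi_j=\eta^{-j}\max\bigl\{\vol(\tilde a\one_k,w_2,\ldots,w_j):w_i\in V(\theta)\bigr\},
\end{align*}
with $\Psi_j=0$ if no independent tuple exists. Let $r$ be the index maximizing $\Psi_j$, and let $U$ be the span of a maximizing frame $(\tilde a\one_k,w_2,\ldots,w_r)$.

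Comparison with dimension $r+1$ proves the distance bound in (i). For any $v\in V(\theta)$, appending $v$ multiplies the volume by $\dist(v,U)$, so $\Psi_{r+1}\le\Psi_r$ forces $\dist(v,U)\le\eta$. Comparison with dimension $r-1$ bounds the conditioning of the frame from below. Removing any $w_i$ and replacing it by the best frame of dimension $r-1$ gives $\vol_r\ge\eta\vol_{r-1}$. Hence each vector has distance at least $\eta$ from the span of the others, using the maximal $(r-1)$-volume over all frames containing $\tilde a\one_k$; removal of $\tilde a\one_k$ itself needs a separate treatment, either by including frames without it or by projecting onto $\one_k^\perp$. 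Since the entries are bounded by $\max|E|$ and the norms by $C\sqrt k$, the inverse Gram matrix has norm at most $Ck\eta^{-2}$ up to polynomial factors. This yields $s_{\min}\gtrsim\eta/\mathrm{poly}(k)$.

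For (ii), take the $r\times k$ coordinate matrix of the frame, select the $r$ columns (coordinates) of maximal minor, and write $U$ as a graph $\{(x,Gx)\}$ over them. Cramer's rule with the maximal minor gives $|G_{ij}|\le1$, and $\tilde a\one_k\in U$ gives $G\one_r=\one_{k-r}$. Each $v=(x,y)\in V(\theta)$ lies within $\eta$ of some $(x',Gx')\in U$, so
\begin{align*}
\|y-Gx\|_\infty\le\eta+\|G(x'-x)\|_\infty\le\eta(1+\sqrt r)\le\sqrt{k+1}\,\eta
\end{align*}
after refining with $\ell^2$ and Cauchy--Schwarz on the row of $G$. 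Injectivity of the input projection follows once $2\sqrt{k+1}\,\eta<\Delta$, which holds for $C_{\ref{lem:relations}}$ large, exactly as in \eqref{eq:tubecount}. The basis $Q$ consists of the input parts of the frame vectors. The input part is a coordinate restriction of a frame with least singular value $\gtrsim\eta/k^{2}$, and the graph map has norm at most $\sqrt{k}\cdot\sqrt{k}$ by $|G_{ij}|\le1$. Therefore $s_{\min}(Q)\ge s_{\min}(\text{frame})/\|(I,G^T)^T\|\ge c\eta/k^3$.

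The main obstacle is the lower bound in the comparison with dimension $r-1$. The scaled maximization controls only the ratio of maximal volumes, not the distance of each selected vector from the others. The plan is to use the identity $\vol_r^2=\vol_{r-1}^2\cdot\dist^2$ for each deletion and the inequality $\vol_{r-1}(\text{deleted frame})\le\max_{r-1}$. These give each distance $\ge\eta$, and the Gram inverse bound then follows from summing $\dist^{-2}$, namely $\operatorname{tr}G^{-1}=\sum_i\dist_i^{-2}\le r\eta^{-2}$.

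Measurability follows by fixing a lexicographic tie-breaking over the finitely many candidate tuples from $E^k$, whose membership in $V(\theta)$ is defined by finitely many closed conditions in $\theta$, so that $U(\theta)$ is piecewise constant on Borel pieces.
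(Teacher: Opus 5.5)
Your proposal is correct and is essentially the paper's proof. Since $\vol(\tilde a\one_k,w_2,\ldots,w_j)=|\tilde a|\sqrt k\,\vol(Pw_2,\ldots,Pw_j)$ with $P$ the projection onto $H_k$, your $\Psi_j$ is a constant multiple of the paper's $\eta^{-(j-1)}v_{j-1}^*$, so the selected frame, the two neighbouring comparisons, the maximal-minor graph and the bound $s_{\min}(Q)\ge s_{\min}(Q_1)/\norm{\binom{I_r}{G}}$ coincide with the paper's.

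The loose end you flag is resolved by your second suggestion, which is what the paper does. Apply the trace bound $\operatorname{tr}\Gamma^{-1}\le(r-1)\eta^{-2}$ only to the projected frame $(Pw_2,\ldots,Pw_r)$; the $\tilde a\one_k$ column should not enter your sum $\sum_i\dist_i^{-2}$. Then bound its coefficient from the $\one_k$-component, which gives $s_{\min}(Q_1)\ge c\eta/k^2$. Also use the Cauchy--Schwarz bound $\sqrt{r+1}\,\eta$ directly, since your intermediate $(1+\sqrt r)\eta$ can exceed $\sqrt{k+1}\,\eta$.
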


\begin{proof}
Let $P$ project onto $H_k$.
Choose $C_{\ref{lem:relations}}$ larger than the precision
constant in Proposition~\ref{prop:approxcount} by a sufficiently
large fixed amount. For $0\le j\le k-1$, let $v_j^*$ be the largest
volume of a frame of $j$ vectors from $PV$, with $v_0^*=1$.
Choose
\begin{align*}
p\in\operatorname*{arg\,max}_{0\le j\le k-1}\eta^{-j}v_j^*
\end{align*}
and a maximizing frame of $p$ vectors $(Pv_1,\ldots,Pv_p)$. Define
\begin{align*}
r=p+1,\qquad d=k-r,\qquad
 U=\spn(\tilde{a}\one_k,v_1,\ldots,v_p).
\end{align*}
Ties throughout are resolved by fixed orderings of the finite product sets. These choices are consequently measurable.

Write $Q_0=(Pv_1,\ldots,Pv_p)$ and $\Gamma=Q_0^TQ_0$ when $p>0$.
Comparison of the maximizing value with dimension $p-1$ gives
\begin{align*}
\eta^{-(p-1)}v_{p-1}^*\le\eta^{-p}v_p^*,
 \qquad \frac{v_{p-1}^*}{v_p^*}\le\eta^{-1}.
\end{align*}
Let $Q_{0,\widehat i}$ be $Q_0$ with column $i$ deleted. The cofactor formula gives
\begin{align*}
(\Gamma^{-1})_{ii}
 =\frac{\vol(Q_{0,\widehat i})^2}{(v_p^*)^2}
 \le\eta^{-2}.
\end{align*}
Thus $\norm{\Gamma^{-1}}\le\operatorname{tr}\Gamma^{-1}\le p\eta^{-2}$,
and $s_{\min}(Q_0)\ge\eta/\sqrt p$.
If $p<k-1$, comparison with dimension $p+1$ gives
\begin{align*}
\eta^{-(p+1)}v_{p+1}^*\le\eta^{-p}v_p^*,
 \qquad \frac{v_{p+1}^*}{v_p^*}\le\eta.
\end{align*}
Appending $Pv$ to $Q_0$ multiplies volume by its distance to the
span of $Q_0$. Consequently
\begin{align*}
\dist(v,U)=\dist(Pv,\spn(Q_0))
 \le v_{p+1}^*/v_p^*\le\eta\qquad(v\in V).
\end{align*}
For $p=0$ this follows from $v_0^*=1$ and $v_1^*\le\eta$;
there is no inverse Gram matrix to estimate. For $p=k-1$,
$U=\R^k$ and the distance assertion is immediate; no frame of $p+1$ vectors
is required.

Choose a maximal coordinate minor on $U$ and denote its input
coordinates by $I$, $|I|=r$. After a permutation,
$U=\{(x,Gx):x\in\R^r\}$ with $G\in\R^{d\times r}$ and $|G_{ij}|\le1$.
To see this, take any $k\times r$ basis matrix of $U$ and
choose its $r$ input rows to maximize the absolute determinant.
Replacing its $j$th input row by an output row multiplies this
determinant by the corresponding entry of $G$. This proves the bound. Since $\tilde{a}\one_k\in U$, we also have
$G\one_r=\one_d$.
If $v=(x,y)\in V$ and $v-w=e$ with $w\in U$ and
$\norm e_2\le\eta$, then $y-Gx=e_{I^c}-Ge_I$.
Each row of $(-G,I_d)$ has norm at most $\sqrt{r+1}$, so
\begin{align*}
\norm{y-Gx}_\infty\le\sqrt{k+1}\,\eta.
\end{align*}
We choose $C_{\ref{lem:relations}}$ large enough that twice this bound is less
than $\Delta$. If two vectors in $V$ have the same input $x$, their output
coordinates differ by at most $2\sqrt{k+1}\eta<\Delta$.
Since both outputs are in $E^d$, they agree. Thus projection onto
$I$ is injective on $V$.
Write its image as $\mathcal S\subset E^r$ and its output labels
as $f:\mathcal S\to E^d$. Thus $|\mathcal S|=|V|\le q^r$.
The selected frame lies exactly in $U$, so its projection is a
basis $Q$ contained in $\mathcal S$; in particular $\mathcal S$
spans $\R^r$ and contains $\tilde{a}\one_r$.

We also have the quantitative bound
\begin{align*}
s_{\min}(Q)\ge c\eta/k^3.
\end{align*}
To verify the bound, let
$Q_1=(\tilde{a}\one_k,v_1,\ldots,v_p)$ and
$y=Q_1(\alpha_0,\alpha)$, where $\alpha\in\R^p$.
Projection onto $H_k$ gives
$\norm\alpha_2\le\sqrt p\,\eta^{-1}\norm y_2$.
Projection onto $\spn(\one_k)$ then gives
\begin{align*}
|\tilde{a}|\sqrt k\,|\alpha_0|
 \le\norm y_2+C\sqrt{kp}\norm\alpha_2.
\end{align*}
It follows that $s_{\min}(Q_1)\ge c\eta/k^2$, where $c$
depends only on $E$. The identity $Q_1=\binom{I_r}{G}Q$
and the bound $\norm{\binom{I_r}{G}}\le\sqrt{1+dr}\le k+1$
give the claimed bound on $s_{\min}(Q)$, after changing $c$.
For $p=0$ the input basis is $(\tilde{a})$ and the assertion follows
directly.
\end{proof}

We use two auxiliary estimates in the part of the proof with bounded density loss.
The first integrates the Gaussian average that occurs in
\eqref{eq:Krelations}; the second localizes dense sets of relations
and makes sufficiently accurate interpolation exact.

\Needspace{5\baselineskip}
\begin{lemma}\label{lem:gramintegral}
Fix a finite $E\subset\R$ with $|E|=q\ge2$ and $a>0$.
There are constants
$C_{\ref{lem:gramintegral}}=C_{\ref{lem:gramintegral}}(E,a)>0$ and
$c_{\ref{lem:gramintegral}}=c_{\ref{lem:gramintegral}}(E)>0$.
Set
\begin{align*}
\Psi_{N,r}(z)=q^{-r}\sum_{b\in E^r}e^{-aN^2(\langle b,z\rangle)^2}.
\end{align*}
For $N,L\ge1$ and positive integers $r,s$ with $r-1\le s/2$,
\begin{align*}
\int_{\substack{z\in H_r\\|z_j|\le L/N}}\Psi_{N,r}(z)^s\,dz
 \le\left[\frac{C_{\ref{lem:gramintegral}}}{N}
             \left(s^{-1/2}+Le^{-c_{\ref{lem:gramintegral}}s}\right)\right]^{r-1}.
\end{align*}
\end{lemma}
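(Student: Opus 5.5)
The plan is to rescale, expand the $s$-th power into an average of Gaussians indexed by a random Gram matrix, integrate each Gaussian explicitly, and then control the small eigenvalues of that Gram matrix using Lemma~\ref{lem:projection}.

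\textbf{Rescaling and Gram representation.} Substitute $z=w/N$, which contributes the factor $N^{-(r-1)}$ in the measure of $H_r$ and turns the box into $\{w\in H_r:\norm w_\infty\le L\}$. Expanding the power as an average over independent copies, we have
\begin{align*}
\Psi_{N,r}(w/N)^s=\EE_{b^{(1)},\ldots,b^{(s)}}\exp\Bigl(-a\,w^TGw\Bigr),
\qquad G=\sum_{l=1}^s b^{(l)}b^{(l)T},
\end{align*}
with $b^{(l)}$ independent and uniform on $E^r$. By Fubini it therefore suffices to bound
\begin{align*}
\EE_G\int_{w\in H_r,\ \norm w_\infty\le L}e^{-a\,w^TGw}\,dw .
\end{align*}
Let $P$ project onto $H_r$ and let $\lambda_1\le\cdots\le\lambda_{r-1}$ be the eigenvalues of $PGP$ on $H_r$. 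Integrating in an orthonormal eigenbasis of $H_r$, each eigendirection contributes at most $\min\{C_r L,\ C\lambda_i^{-1/2}\}$. The first bound comes from the box and the second from the one-dimensional Gaussian integral.

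\textbf{Main term and the box-width issue.} When all $\lambda_i\ge cs$, the integral is at most $(C/\sqrt s)^{r-1}$, which is the first term of the claimed bound. The constant in the box bound must be dimension-free, so the factor $\sqrt r$ coming from the box diameter is not acceptable. To avoid it, I would not use a rotated eigenbasis for the degenerate directions. Instead, for the $h$ small eigendirections I would select $h$ coordinates of $w$, after eliminating $w_r$ via the chart of $H_r$, whose coordinate projection restricted to the small eigenspace has a maximal minor. This is the same maximal-volume selection used for \eqref{eq:tubecount} and in Lemma~\ref{lem:relations}. The plan is then to integrate those $h$ coordinates over $[-L,L]$ at cost $(2L)^h$, and to integrate the remaining $r-1-h$ coordinates against the Gaussian restricted to the large eigenspace. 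The bounded regression coefficients keep this change of variables at a Jacobian cost of $C^{r}$, which is absorbed into $C_{\ref{lem:gramintegral}}^{r-1}$.

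\textbf{Tail estimate, the main obstacle.} It remains to show
\begin{align*}
\PP\bigl(\#\{i:\lambda_i<cs\}\ge h\bigr)\le \binom{r}{h}e^{-csh},
\end{align*}
or any bound of the form $C^{r}e^{-csh}$. Summing over $h$ then gives
\begin{align*}
\sum_h\binom{r-1}{h}(C/\sqrt s)^{r-1-h}(CLe^{-cs})^h\le\bigl[C(s^{-1/2}+Le^{-cs})\bigr]^{r-1}.
\end{align*}
For a fixed rank-$h$ projection $Q\le P$, Lemma~\ref{lem:projection} shows that each $\norm{Qb^{(l)}}_2^2\ge c_{\ref{lem:projection}}h$ except with probability $2e^{-c'_{\ref{lem:projection}}h}$. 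Since the $b^{(l)}$ are independent, a Chernoff bound gives $\operatorname{tr}(QGQ)\ge c\,sh$ except with probability $e^{-c\,sh}$. Converting this from a fixed $Q$ into a statement about the bottom-$h$ eigenspace requires a union bound over a net of $h$-dimensional subspaces of $H_r$, which has cardinality $e^{O(hr\log(\cdot))}$, together with an operator-norm bound $\norm{G}\le Crs$.

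This is exactly where the hypothesis $r-1\le s/2$ is used: it lets the net entropy $O(hr)$ be absorbed by the exponent $csh$. I expect this step to be the main difficulty, because a trace lower bound on $QGQ$ must be turned into a bound on the individual small eigenvalues, and the $\log$ factor in the net must be controlled using only $s\ge 2(r-1)$. If the net argument turns out too lossy, I would fall back on the prefix construction of Lemma~\ref{lem:densebases}: among $s$ samples one can extract $r-1$ projected samples forming a frame with $s_{\min}\ge c/\sqrt r$ outside an event of probability $e^{-cs}$, and then process disjoint groups of samples.
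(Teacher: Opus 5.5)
Your rescaling, the Gram representation and the final summation over $h$ are fine. The box-width worry is also harmless, since $\sqrt r\,e^{-cs}\le Ce^{-cs/2}$ when $r\le s/2+1$. The gap is the tail bound $\PP(\lambda_h<cs)\le C^re^{-csh}$, which carries the whole content of the lemma, and your net argument does not prove it under the hypothesis $r-1\le s/2$. Passing from a trace to eigenvalues is not the problem: $h$ eigenvalues below $cs$ give an $h$-dimensional $W\subset H_r$ with $\operatorname{tr}(Q_WGQ_W)<csh$. The union bound over subspaces is the problem.

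\begin{itemize}
\item With the deterministic bound $\norm G\le Crs$, the mesh must be of order $c/r$. The net entropy is then of order $h(r-1-h)\log r$, not $O(hr)$.
\item With a high-probability bound $\norm{PGP}\le Cs$, the mesh is of order $c$. The entropy $h(r-1-h)\log(C/c)$ is still of order $sh$ when $r-1\approx s/2$, the same order as the exponent you want to beat. Shrinking $c$ does not help, because it enlarges the net, while for a discrete law the fixed-$W$ exponent stays bounded. For example, for $W=\spn(e_1-e_2,\ldots,e_{2h-1}-e_{2h})$ one has $\PP(Q_Wb=0)=q^{-h}$ per sample. Lemma~\ref{lem:projection} only supplies some small exponent, and it is vacuous per sample for small $h$.
\item The event $\norm{PGP}>Cs$ itself has probability $e^{-O(s)}$, not $e^{-csh}$.
\end{itemize}

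An argument of this type needs $s\ge C_0(E)(r-1)$ with $C_0$ large. The lemma is stated at aspect ratio $2$, and Proposition~\ref{prop:far} uses it there, with $s=2k$ and $r\le k$. Your fallback does not repair this. Lemma~\ref{lem:densebases} gives good prefixes only with probability at least $e^{-Cr}$, not outside an event of probability $e^{-cs}$. A frame with $s_{\min}\ge c/\sqrt r$ also yields eigenvalue lower bounds of order $1/r$, not $s$.

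The missing idea is to expose the $r-1$ columns rather than the $s$ samples. Then no eigenvalue counting or union bound over subspaces is needed.
\begin{enumerate}
\item In the chart, $\Psi_{N,r}(z)^s=\EE_Be^{-a\norm{Du}_2^2}$, where $u=N(z_1,\ldots,z_{r-1})$, $B$ is $s\times r$ with independent entries uniform on $E$, and $D_{\cdot j}=B_{\cdot j}-B_{\cdot r}$. Conditional on $B_{\cdot r}$, these columns are independent shifted product vectors in $\R^s$.
\item The box is removed by $\one_{[-L,L]^{r-1}}(u)\le e^{r-1}e^{-\norm u_2^2/L^2}$. This gives the bound $e^{r-1}\pi^{(r-1)/2}\det(aD^TD+L^{-2}I)^{-1/2}$, with no $\sqrt r$ loss and no coordinate selection.
\item The determinant is the product of Schur complements $S_j\ge L^{-2}+a\norm{P_jD_{\cdot j}}_2^2$. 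Here $P_j$ projects onto the orthogonal complement of the earlier columns and has rank at least $s-j+1\ge s/2$; this is exactly where $r-1\le s/2$ enters.
\item Lemma~\ref{lem:projection}, applied conditionally on the earlier columns, gives $\EE(S_j^{-1/2}\mid D_{\cdot 1},\ldots,D_{\cdot(j-1)})\le C(s^{-1/2}+Le^{-cs})$.
\item Successive conditioning multiplies these $r-1$ factors, and the Jacobian $N^{-(r-1)}$ completes the bound.
\end{enumerate}
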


\begin{proof}
We integrate over the random Gram matrix before estimating its
inverse determinant. Regularization handles dependent columns;
Lemma~\ref{lem:projection} bounds the exceptional small distances
in the successive Schur complements.

Let $Z=(z_1,\ldots,z_p)$ have independent columns
$z_j=b_j-v_j\in\R^s$, where each $b_j$ is uniform on $E^s$
and the $v_j$ are deterministic. Assume $p\le s/2$.
For $\lambda>0$, the $j$th Schur complement of
$aZ^TZ+\lambda I$ is
\begin{align*}
S_j=\lambda+az_j^T\left[I-aZ_{<j}
 (aZ_{<j}^TZ_{<j}+\lambda I)^{-1}Z_{<j}^T\right]z_j
 \ge\lambda+a\norm{P_jz_j}_2^2,
\end{align*}
where $P_j$ projects onto the orthogonal complement of the earlier
columns. The bracket is positive semidefinite and equals the
identity on that complement. Conditional on the earlier columns,
$h_j:=\rank P_j\ge s-j+1\ge s/2$.
With the constants from Lemma~\ref{lem:projection},
\begin{align*}
\PP\left(\norm{P_jz_j}_2^2<c_{\ref{lem:projection}}s/2\mid Z_{<j}\right)
 \le2e^{-c'_{\ref{lem:projection}}h_j}\le2e^{-c'_{\ref{lem:projection}}s/2},
\end{align*}
since $c_{\ref{lem:projection}}s/2\le c_{\ref{lem:projection}}h_j$.
On the complementary event, $S_j^{-1/2}\le C s^{-1/2}$, with
$C=C(E,a)$, while always $S_j^{-1/2}\le\lambda^{-1/2}$. Taking $c=c'_{\ref{lem:projection}}/2$
and increasing $C$ gives
\begin{align*}
\EE(S_j^{-1/2}\mid Z_{<j})
 \le C
 \left(s^{-1/2}+\lambda^{-1/2}e^{-cs}\right).
\end{align*}
Since the determinant is the product of the $S_j$, successive
conditioning, starting with the last column, yields
\begin{align*}
\EE\det(aZ^TZ+\lambda I)^{-1/2}
 \le\left[C
 \left(s^{-1/2}+\lambda^{-1/2}e^{-cs}\right)\right]^p.
\end{align*}

The asserted integral is one when $r=1$. Otherwise set $p=r-1$,
let $B$ be an $s\times r$ matrix with independent entries uniformly distributed on $E$,
and set $D_j=B_{\cdot j}-B_{\cdot r}$ for $j<r$.
With $u_j=Nz_j$,
$\Psi_{N,r}(z)^s=\EE_Be^{-a\norm{Du}_2^2}$.
The integration domain projects into $[-L,L]^p$, and
\begin{align*}
\int_{[-L,L]^p}e^{-a\norm{Du}_2^2}\,du \le e^p\int_{\R^p} e^{-a\norm{Du}_2^2-\norm u_2^2/L^2}\,du =e^p\pi^{p/2}\det(aD^TD+L^{-2}I)^{-1/2}.
\end{align*}
Condition on the shared column $B_{\cdot r}$. The columns of $D$
are then independent product vectors with a common deterministic
shift. The determinant
bound is uniform in that shift, so it applies conditionally and
then after averaging over $B_{\cdot r}$. Use
$\lambda=L^{-2}$ and include the Jacobian $N^{-p}$.
\end{proof}

For bounded density loss, scalar anticoncentration confines the coefficients
that can be large. There are then only finitely many interpolation problems of bounded size in $E^r$. Small inner products then localize a frequency, while sufficiently
accurate approximate labels become exact labels. Neither assertion requires a determinant
bound in a growing dimension.

\begin{lemma}\label{lem:localization}
Fix $T\ge0$. There are constants
$C_{\ref{lem:localization}}=C_{\ref{lem:localization}}(E,T)>0$ and
$c_{\ref{lem:localization}}=c_{\ref{lem:localization}}(E,T)>0$ such that:

\begin{enumerate}[label=(\roman*),leftmargin=2em]
\item If $\mathcal S\subset E^r$ spans $\R^r$,
$|\mathcal S|>q^{r-T-1}$ and $|\langle x,z\rangle|\le\rho$ for every
$x\in\mathcal S$, then
\begin{align*}
\norm z_\infty\le C_{\ref{lem:localization}}r\rho.
\end{align*}
\item If $|\mathcal S|>q^{r-T-1}$ and
$\norm{Gx-f(x)}_\infty\le\rho\le c_{\ref{lem:localization}}/r$ on $\mathcal S$,
where $G\in\R^{d\times r}$ and $f:\mathcal S\to E^d$,
then there is $G'\in\R^{d\times r}$ with $f(x)=G'x$ for all
$x\in\mathcal S$.
\end{enumerate}
\end{lemma}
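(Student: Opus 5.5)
For part (i), I will use the scalar Littlewood--Offord estimate to show that only boundedly many coordinates of $z$ can be large, and then invert a basis drawn from a finite list. Let $b$ be uniform on $E^r$. The hypotheses give
\begin{align*}
\PP_b\left(|\langle b,z\rangle|\le\rho\right)\ge q^{-r}|\mathcal S|>q^{-T-1}.
\end{align*}
Let $J$ be the set of indices with $|z_j|>2\rho/\Delta$. If $J$ is nonempty, Lemma~\ref{lem:scalar} with $h=\rho$ bounds the same probability by $C_{\ref{lem:scalar}}/\sqrt{|J|}$. Hence $|J|\le s_0:=C_{\ref{lem:scalar}}^2q^{2T+2}$, which depends only on $q$ and $T$.

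For $x\in\mathcal S$, the coordinates outside $J$ contribute at most $Br\cdot2\rho/\Delta$, where $B=\max_{a\in E}|a|$. Therefore
\begin{align*}
|\langle x_J,z_J\rangle|\le\rho+2Br\rho/\Delta\le Cr\rho .
\end{align*}
Because $\mathcal S$ spans $\R^r$, the projections $x_J$ with $x\in\mathcal S$ span $\R^J$. I choose $|J|$ of them forming an invertible matrix $W\in E^{J\times J}$. There are only finitely many such matrices, since $|J|\le s_0$ and the entries lie in $E$. Hence $\norm{W^{-1}}\le C(E,T)$, uniformly in $r$. Solving $W^Tz_J=(\langle x_J,z_J\rangle)_x$ then gives $\norm{z_J}_\infty\le C(E,T)r\rho$. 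The coordinates outside $J$ are already at most $2\rho/\Delta$, so this proves (i).

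For part (ii), I work one row at a time. Let $g$ be a row of $G$ and $\phi=f_i$ the corresponding output label. Since $\phi(x)\in E$, the event $\dist(\langle b,g\rangle,E)\le\rho$ has probability greater than $q^{-T-1}$. This probability is at most $q\,\Lc(\langle b,g\rangle,\rho)$, so Lemma~\ref{lem:scalar} again bounds the set $J$ of large coordinates, $|g_j|>2\rho/\Delta$, by $|J|\le s_0'(q,T)$. On $\mathcal S$ we then have
\begin{align*}
|\phi(x)-\langle g_J,x_J\rangle|\le C_0r\rho=:\epsilon .
\end{align*}
I choose $c_{\ref{lem:localization}}$ so that $2\epsilon<\Delta$. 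Then two points of $\mathcal S$ with the same $x_J$ have labels differing by less than $\Delta$, so the labels coincide. Thus $\phi$ factors through a function $\psi:A\to E$, where $A=\pi_J(\mathcal S)\subset E^J$, and $\psi$ is $\epsilon$-close to a linear form on $A$.

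It remains to make this approximate linear relation exact, and the compactness step that does so is the crux. The triple $(|J|,A,\psi)$ ranges over a finite set depending only on $E$ and $T$. For each triple I consider
\begin{align*}
\tau(A,\psi)=\min_{w\in\R^J}\max_{a\in A}|\langle w,a\rangle-\psi(a)| .
\end{align*}
This minimum is attained, since it is a linear program, and it is zero exactly when $\psi$ is the restriction of a linear form. Let $\tau_*>0$ be the minimum of $\tau$ over the finitely many triples with $\tau>0$. I shrink $c_{\ref{lem:localization}}$ so that $C_0r\rho\le C_0c_{\ref{lem:localization}}<\tau_*$. Then $\tau(A,\psi)=0$, so there is an exact $w$ with $\psi(a)=\langle w,a\rangle$ on $A$. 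Extending $w$ by zero outside $J$ gives a row $g'$ with $\phi(x)=\langle g',x\rangle$ on $\mathcal S$. Stacking these rows gives $G'$.

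The main point to check is that every constant involved depends only on $(E,T)$ and not on $r$. This holds because the supports are bounded by Littlewood--Offord, so only finitely many bounded-size configurations in $E^J$ occur. The factor $r$ in (i), and the requirement $\rho\le c/r$ in (ii), come only from summing the small coordinates.
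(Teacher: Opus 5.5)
Your proposal is correct and follows the paper's proof essentially step by step. In (i) you bound the set $J$ of large coordinates via Lemma~\ref{lem:scalar} and invert one of finitely many invertible matrices with entries in $E$ of bounded size. In (ii) you treat one row at a time, make the labels factor through the projection onto $J$, and force exactness using the minimum positive sup-norm distance over finitely many configurations.
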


\begin{proof}
For (i), $\rho=0$ follows from spanning. Otherwise set
$J=\{j:|z_j|>2\rho/\Delta\}$. If it is empty, the conclusion
holds directly. Lemma~\ref{lem:scalar} gives
$q^{-T-1}<\PP\left(|\langle b,z\rangle|\le\rho\right)\le C/\sqrt{|J|}$,
so $|J|\le C q^{2T+2}$. The projection of $\mathcal S$ onto
$J$ spans $\R^J$. Choose $|J|$ projected vectors as rows of an
invertible matrix $B$. Their coordinates outside $J$ contribute
at most $Cr\rho$, hence
$\norm{Bz_J}_\infty\le Cr\rho$. The inverses of the finitely many invertible matrices with entries in $E$ of these bounded sizes have
norm bounded in terms of $E,T$. This proves (i).

For (ii), it suffices to treat one row $a$ of $G$ and its scalar
label $f_0$. The case $\rho=0$ is immediate. Define
$J=\{j:|a_j|>2\rho/\Delta\}$. By a union bound over the
$q$ possible labels and Lemma~\ref{lem:scalar},
\begin{align*}
q^{-T-1}<\PP\left(\dist(\langle a,b\rangle,E)\le\rho\right)
 \le qC/\sqrt{|J|}
\end{align*}
when $J$ is nonempty. Thus $|J|\le C q^{2T+4}$, including the
empty case. The omitted part of $\langle a,x\rangle$ has absolute value
at most $Cr\rho$. If two points of $\mathcal S$
have the same projection onto $J$, their labels differ by at
most $Cr\rho$. Choosing $c_{\ref{lem:localization}}$ small
makes this less than $\Delta$, so the label is a function of
that projection.

For each $j\le C q^{2T+4}$ there are finitely many pairs
$(F,f_0)$ with $F\subset E^j$ and $f_0:F\to E$.
The evaluations of linear forms on $F$ form a closed linear
subspace of $\R^F$. If $f_0$ is not in this subspace, its
distance to it in the supremum norm is positive. Take the minimum of
these positive distances over the finite collection of pairs,
or take one if the collection is empty. Our projected labels
are within $Cr\rho$ of evaluation of the linear form $a_J$.
Decreasing $c_{\ref{lem:localization}}$ below the fixed positive minimum forces
exact linear representability on $F$. Extend that form by zero
outside $J$. Doing this for each output row gives $G'$;
the choice of $c_{\ref{lem:localization}}$ is independent of $r$ and $d$.
\end{proof}

\Needspace{10\baselineskip}
\begin{proof}[Proof of Proposition~\ref{prop:far}]
Fix $M\ge1$ and $D>1$. We first choose a fixed $A\ge1$, then a
fixed integer $T\ge1$, and finally take
$n\ge n_{\ref{prop:far}}(E,M,D)$.
The choices of $A,T$ and this dimension threshold will be made
at the end of the proof.
Use the even moment order
\begin{align*}
k=2\left\lfloor\frac{An}{2\log(n+1)}\right\rfloor
  =\frac{An}{\log(n+1)}+O(1).
\end{align*}
Use $R=\sqrt{3k\log q/a}$ and the precision $\eta$ from Lemma~\ref{lem:relations}. Since $A$ and $T$ are fixed independently of $n$,
\begin{align*}
2C_{\ref{lem:localization}}kR
=O_{E,A,T}\left(
\left(\frac{n}{\log(n+1)}\right)^{3/2}
\right)
=o(n^2).
\end{align*}
After increasing $n_{\ref{prop:far}}(E,M,D)$, we may therefore assume throughout that
\begin{align*}
n^2\ge 2C_{\ref{lem:localization}}kR.
\end{align*}

For each $\theta\in H_k\cap\mathfrak m^k$, select $U(\theta)$ by
Lemma~\ref{lem:relations}, and set $r=\dim U$ and $d=k-r$.
The input projection of $V$ is injective, so $|V|\le q^r$;
also $V$ contains all $q$ constant vectors. There is therefore a
unique integer $0\le t\le r-1$ such that
\begin{align*}
q^{-t-1}<|V(\theta)|/q^r\le q^{-t}.
\end{align*}
By \eqref{eq:Krelations} and the definition of $d,t$, with $\varepsilon=q^{-k-1}$,
\begin{align}
|K_i(\theta)|\le q^{-d-t}+q^{-3k}
                    \le(1+\varepsilon)q^{-d-t}.
 \label{eq:Kpointwise}
\end{align}

Let $\Omega_{d,t}$ be the set of frequencies with these values of
$d,t$, and define
\begin{align*}
\mathcal T_{d,t}
 =2^n C_{\ref{lem:moment}}^kN^{k+1}n^{(k-1)/2}
 \int_{\Omega_{d,t}}\prod_{j=1}^k\omega_n(\theta_j)
                       \prod_{i=1}^n|K_i(\theta)|\,d\theta.
\end{align*}
The sets $\Omega_{d,t}$ form a measurable partition. Lemma~\ref{lem:moment}
gives
\begin{align*}
\PP_Z\left(\norm{R_Z}_\infty>\frac1{N\sqrt n}\right)
 \le\sum_{d=0}^{k-1}\sum_{t=0}^{k-d-1}\mathcal T_{d,t}.
\end{align*}
There are at most $k^2$ summands. We shall obtain, uniformly in
$d,t,N$ for $n\ge n_{\ref{prop:far}}(E,M,D)$,
\begin{align*}
\log\mathcal T_{d,t}\le
\begin{cases}
 [\log(2q^2)-A(D-1)/2+o(1)]n,&t\le T,\\
 C(A+1)n-nt\log q/2,&t>T.
\end{cases}
\end{align*}
The two estimates balance the size of the kernel against the size of the frequency region. By \eqref{eq:Kpointwise}, the product of the kernel factors is at most $(1+\varepsilon)^n q^{-n(d+t)}$. The relations defining $V(\theta)$, in turn, restrict the region over which this bound must be integrated. When $t$ is bounded, they determine an exact space $U$. We count these spaces and integrate over each corresponding region; after the moment normalization, the count contributes $d\log(k+1)$, while the integral contributes $-d\log(q^n/N)$. When $t$ is large, we use a weighted cover by relation slabs instead. Its coarser counting and integration costs are absorbed by the factor $q^{-nt}$.

\smallskip
\noindent\textit{The case $t\le T$.}
We first show that all approximate relation vectors lie exactly in the selected subspace $U$. In the
graph representation of Lemma~\ref{lem:relations}, the input set
$\mathcal S$ has size $|V|>q^{r-T-1}$ and interpolation error at
most $\sqrt{k+1}\eta$. Since $k\sqrt{k+1}\eta\to0$,
Lemma~\ref{lem:localization}(ii) gives, for all sufficiently large
$k$, an exact linear graph containing $V$. This graph and $U$ both
have dimension $r$ and contain the selected $r$ independent vectors
from $V$. They coincide, so
\begin{align*}
U=\spn V,\qquad V\subset U,\qquad |U\cap E^k|>q^{k-d-t-1}.
\end{align*}
There are at most $\exp \mathcal H_d(k,t+1)$ possible spaces $U$.

We partition the integral according to the value of $U(\theta)$.
For each such space, define
\begin{align*}
\Omega_{d,t}(U)=\{\theta\in\Omega_{d,t}:U(\theta)=U\}.
\end{align*}
These regions are disjoint, and hence
\begin{align}
\mathcal T_{d,t} ={}2^n C_{\ref{lem:moment}}^kN^{k+1}n^{(k-1)/2} \sum_U\int_{\Omega_{d,t}(U)} \prod_{j=1}^k\omega_n(\theta_j) \prod_{i=1}^n|K_i(\theta)|\,d\theta.
 \label{eq:sumoverspaces}
\end{align}
We estimate one integral in this sum. Fix $U$ with
$\Omega_{d,t}(U)\ne\emptyset$; all the following kernel estimates
are first established on that region.

Choose $r$ graph coordinates for $U$, writing
after permutation
\begin{align*}
U=\{(b,Gb):b\in\R^r\},\qquad G\one_r=\one_d.
\end{align*}
For these graph coordinates use
\begin{align*}
\theta=(z-G^T\eta,\eta),\qquad z\in H_r,\quad\eta\in\R^d.
\end{align*}
Then $\langle(b,Gb),\theta\rangle=\langle b,z\rangle$. The $r-1$ coordinates of
$z\in H_r$ are the constrained variables; the $d$ coordinates
of $\eta$ parametrize the remaining directions, with
$(-G^T\eta,\eta)\in U^\perp$.
This preserves the hyperplane condition, since
\begin{align*}
\langle\one_k,\theta\rangle
 =\langle\one_r,z\rangle-\langle G\one_r,\eta\rangle+\langle\one_d,\eta\rangle=0.
\end{align*}
Use the measure defined by eliminating one coordinate on $H_r$ times Lebesgue
measure on $\R^d$ in the domain, and the measure defined by eliminating one coordinate on $H_k$ in the target. Eliminating the same input coordinate
on both sides gives a block triangular matrix with identity diagonal
blocks, so the Jacobian is one.

The projection $S_0$ of $V$ onto the input coordinates spans
$\R^r$, has $|S_0|=|V|$, and obeys $|\langle b,z\rangle|\le R/N$.
For $t\le T$, Lemma~\ref{lem:localization}(i) implies
\begin{align*}
|z_j|\le C_{\ref{lem:localization}}rR/N \le n^{2}/N.
\end{align*}
If a coordinate vector $e_j$ belongs to $U$, any injective graph
projection must retain its coordinate, and the corresponding column
of $G$ is zero. Thus $\theta_j=z_j$, contrary to $\theta \in \mathfrak{m}^{k}$. Hence a space $U$ with
$\Omega_{d,t}(U)\ne\emptyset$ contains no coordinate vector. More explicitly,
\begin{align*}
\frac{|U\cap E^k|}{q^{k-d}}>q^{-T-1},\qquad
 q^{-T-1}<\left(1+c(k-2d)_+\right)^{-1/2}
\end{align*}
by \eqref{eq:codim}. Hence $(k-2d)_+=O_{q,T}(1)$, giving
\begin{align*}
d\ge k/2-C.
\end{align*}

We now estimate the integral for this fixed $U$. In the first bound
of \eqref{eq:Krelations}, every $v\in V$ has $\langle v,\theta\rangle=\langle b,z\rangle$.
Enlarging the projected sum to all $b\in E^r$ gives
\begin{align*}
|K_i(\theta)|\le q^{-d}\Psi_{N,r}(z)+q^{-3k}.
\end{align*}
Since $z\in H_r$, constant configurations give
$\Psi_{N,r}(z)\ge q^{1-r}$, and hence
\begin{align*}
|K_i(\theta)|\le(1+\varepsilon)q^{-d}\Psi_{N,r}(z).
\end{align*}
Take $s=2k$ and assume $n\ge s$. Apply the Gram integral estimate to $s$ of the $n$
kernel factors, using this bound;
\eqref{eq:Kpointwise} keeps the additional density saving
$q^{-(n-s)t}$ in the others. Retaining the $d$
Gaussian factors at the output coordinates integrates $\eta$ at a
cost $(C/\sqrt n)^d$. Lemma~\ref{lem:gramintegral}, with
$L=C_{\ref{lem:localization}}kR$, gives
\begin{align*}
\int_{\substack{z\in H_r\\|z_j|\le C_{\ref{lem:localization}}rR/N}}
             \Psi_{N,r}(z)^s\,dz
 \le\left(\frac{C}{N\sqrt{s}}\right)^{r-1}
\end{align*}
for all large $k$. Its required condition is
$C_{\ref{lem:localization}}kR e^{-cs}\le s^{-1/2}$, which holds for fixed $E,T$.

We have therefore proved the following bound for each nonempty
region $\Omega_{d,t}(U)$:
\begin{align*}
\int_{\Omega_{d,t}(U)} \prod_{j=1}^k\omega_n(\theta_j)\prod_{i=1}^n|K_i(\theta)|\,d\theta \le (1+\varepsilon)^nq^{-nd-(n-s)t} \left(\frac{C}{N\sqrt s}\right)^{r-1} \left(\frac{C}{\sqrt n}\right)^d.
\end{align*}
Here we enlarged the integration domain to the indicated box in
$H_r$ times all of $\R^d$ only after obtaining the kernel bounds
on $\Omega_{d,t}(U)$. All constants are uniform over these spaces. We can now insert this
bound into \eqref{eq:sumoverspaces} and use the count
$\exp \mathcal H_d(k,t+1)$.

Write $\Delta_n=\log(q^n/N)$. Combining the count, the two
integral factors and the normalization gives
\begin{align*}
\mathcal T_{d,t}\le{} [2q^2(1+\varepsilon)]^n C^k e^{\mathcal H_d(k,t+1)}(n/s)^{(r-1)/2} e^{-(d+2)\Delta_n-(n-s)t\log q},\qquad t\le T.
\end{align*}
Here $r=k-d$, so the powers of $N$ cancel exactly as
\begin{align*}
N^{k+1}N^{-(r-1)}q^{-nd} =N^{d+2}q^{-nd} =(q^ne^{-\Delta_n})^{d+2}q^{-nd} =q^{2n}e^{-(d+2)\Delta_n}.
\end{align*}

For $k=An/\log(n+1)+O(1)$ and fixed $A,D,T$,
applying \eqref{eq:refinedcount} to
the last bound and using $\Delta_n\ge D\log(n+1)$ gives
\begin{align*}
\log\mathcal T_{d,t}
 \le{}& n\log(2q^2)+d\log(k+1)-dD\log(n+1)\\
&+O_{E,T}(k)+O\left(k\log(n/s)\right)
       +n\log(1+\varepsilon).
\end{align*}
We discarded only nonpositive terms. The two leading entropy terms
satisfy, since $d\ge k/2-C$,
\begin{align*}
d\log(k+1)-dD\log(n+1) \le-d(D-1)\log(n+1)+O_A(k) \le-\frac{A(D-1)}2n+o(n).
\end{align*}
Here $k\log(n/s)=o(n)$, $C_{\ref{lem:localization}}k=o(n)$ and
$n\varepsilon=o(n)$. These errors are uniform over $d$, $t\le T$
and the allowed $N$, since $A,D,T$ are fixed. We obtain
\begin{align}
\log\mathcal T_{d,t}
 \le\left[\log(2q^2)-\frac{A(D-1)}2+o(1)\right]n.
 \label{eq:lowexponent}
\end{align}
The coefficient one in \eqref{eq:refinedcount} is precisely what
allows every fixed $D>1$.

\smallskip
\noindent\textit{The case $t>T$.}
In this range the factor $q^{-nt}$ allows a less precise count.
We cover the frequency region by slab sets determined by bases from
$V$. Each frequency belongs to many of these sets, so we divide the
sum of their indicators by the number of available bases. We first
bound the resulting total weight, and then integrate over one slab set.

Set $u=t+1$ and use the input set $\mathcal S$ from
Lemma~\ref{lem:relations}. Lemma~\ref{lem:densebases} supplies
at least
\begin{align*}
e^{-Cr}|\mathcal S|^m
 \ge e^{-Cr}q^{(r-u)m},\qquad
 m=(r-1-\lceil C_{\ref{lem:densebases}}(u+1)\rceil)_+,
\end{align*}
ordered input bases $B_0$ starting with $\tilde{a}\one_r$.
For each such basis let $Y\in E^{d\times r}$ be its output
labels and set $G_0=YB_0^{-1}$. For every $x\in\mathcal S$,
the interpolation estimate in Lemma~\ref{lem:relations} and \eqref{eq:barycentric} give
\begin{align*}
\norm{G_0x-f(x)}_\infty
 \le\sqrt{k+1}\eta(1+\norm{B_0^{-1}x}_1)
 \le C(k+1)^5\eta
 \le\exp[-C'_{\ref{prop:approxcount}}\log^2(k+1)].
\end{align*}
The last quantity is the precision required in
Proposition~\ref{prop:approxcount}.
The final inequality is one of the fixed requirements on
$C_{\ref{lem:relations}}$ in Lemma~\ref{lem:relations}.

For any fixed invertible $B_0\in E^{r\times r}$, the number of
possible $Y$ with this many approximate solutions is at most
\begin{align*}
\exp\{C[k(u+1)+d\log(d+1)]\}.
\end{align*}
There are at most $2^k$ choices of input coordinates and
$q^{r(r-1)}$ choices for $B_0$ with its first column fixed.
For each frequency, however, all the bases just counted give slab constraints that it satisfies.
Dividing the sum of their indicators by this multiplicity leaves
a total weight bounded by
\begin{align}
\exp\{Ck(t+2)+Cd\log(k+1)\}.
 \label{eq:fractionalcover}
\end{align}
Indeed, if $g=r-1-m\le C(u+1)$, then
$r(r-1)-(r-u)m=rg+um\le Cr(u+1)$.
More explicitly, let $\mathcal B_{r,t}$
be the deterministic family of recorded triples, restricted to the
determinant bounds specified below. Then
\begin{align*}
\one_{\Omega_{d,t}}(\theta)
 \le e^{Cr}q^{-(r-u)m}
 \sum_{(I,B_0,Y)\in\mathcal B_{r,t}}
 \one_{\{|\langle v_j,\theta\rangle|\le R/N,\ 2\le j\le r\}}.
\end{align*}
The prefactor times $|\mathcal B_{r,t}|$ is bounded by
\eqref{eq:fractionalcover}. This is the total weight of the cover.
The pairs $(B_0,Y)$ already specify the interpolation, so no
additional enumeration of approximate spaces is needed.

To bound the integral over one member of the cover, we need a
lower bound on the determinant of its defining functionals.
We now specify the determinants retained in the family. By the basis bound in Lemma~\ref{lem:relations} and
\eqref{eq:basisdet}, every selected
$B_0$ satisfies
\begin{align*}
|\det B_0|\ge
 \exp[-Ck\log(k+1)]\eta^g,
 \qquad g\le C(t+2).
\end{align*}
In the deterministic family used for the cover, keep only bases
obeying this bound and whose full first column is $\tilde{a}\one_k$.
All the required bases are retained, and the preceding cardinality
bound remains valid.

For such a candidate, denote its full columns by
$\tilde{a}\one_k,v_2,\ldots,v_r$. The functionals
$\theta\mapsto\langle v_j,\theta\rangle$, $2\le j\le r$, are independent on
$H_k$: a combination vanishing there would be a multiple of
$\one_k$, contradicting independence of the full columns.
Their Euclidean row volume on $H_k$ is at least
$|\det B_0|/(|\tilde{a}|\sqrt k)$. To see this, the volume of the full
column frame is at least its input minor $|\det B_0|$, while
orthogonally removing its first column divides that volume by
$|\tilde{a}|\sqrt k$.

Use the chart $\theta_k=-\sum_{i<k}\theta_i$ on $H_k$.
The chart matrix has all singular values at least one, so it
does not decrease this row volume. By Cauchy--Binet, one of the
at most $2^{k-1}$ maximal minors of the restricted functionals
has absolute value at least $2^{-(k-1)/2}$ times their row volume.
Adjoin the $d$ coordinate functionals corresponding to the
unused columns of this minor. This gives an isomorphism
\begin{align*}
\theta\longmapsto
 (\langle v_2,\theta\rangle,\ldots,\langle v_r,\theta\rangle,
                 \theta_{j_1},\ldots,\theta_{j_d})
\end{align*}
whose inverse Jacobian, for the measure defined by eliminating one coordinate
on $H_k$, is at most
\begin{align}
\exp\{Ck\log(k+1)+C(t+2)\log^2(k+1)\}.
 \label{eq:realjacobian}
\end{align}
This also covers $r=1$, with no relation functionals.
Only the $g$ completion directions
contribute powers of $\eta^{-1}$. Using only the basis from Lemma~\ref{lem:relations} in all $r$ directions would instead
permit a factor $\eta^{-r}=\exp\{C r\log^2(k+1)\}$.
For $r=\Theta(k)$ and $k=\Theta(n/\log n)$, this factor is
$\exp\{\Theta(n\log n)\}$, which is too large for an
$\exp\{O(n)\}$ bound. The averaging over bases is what replaces $r$ by
$g\le C(t+2)$ in this part of the Jacobian estimate.

Every chosen relation column has $|\langle v_j,\theta\rangle|\le R/N$.
Retain the Gaussian factors at the $d$ adjoined coordinates and
discard the others. Equation~\eqref{eq:realjacobian} bounds the Gaussian integral over a
candidate slab set by
\begin{align*}
\exp\{Ck\log(k+1)+C(t+2)\log^2(k+1)\}
 (2R/N)^{r-1}(C/\sqrt n)^d.
\end{align*}
The cover can be integrated over all of $H_k$ to obtain an
upper bound; it need not retain the restriction to far frequencies.
Multiply this bound
for one slab set by the total weight
\eqref{eq:fractionalcover} and by the uniform kernel bound
$(1+\varepsilon)^nq^{-n(d+t)}$ from \eqref{eq:Kpointwise}.
After inserting the moment normalization and cancelling the powers of $N$ as in the case of bounded density loss, we obtain
\begin{align*}
\mathcal T_{d,t}\le{}&[2q^2(1+\varepsilon)]^n C^k
 (2R)^{r-1}n^{(r-1)/2}\,
 e^{-(d+2)\Delta_n-nt\log q} \\
&\quad\cdot
 \exp\{Ck(t+2)+Ck\log(k+1)
                    +C(t+2)\log^2(k+1)\}.
\end{align*}
The term $d\log(k+1)$ has been included in $Ck\log(k+1)$.
With $k=An/\log(n+1)+O(1)$ and $R=C\sqrt k$, this gives
\begin{align}
\log\mathcal T_{d,t}
 \le C(A+1)n+Ck(t+2)
           +C(t+2)\log^2(k+1)-nt\log q.
 \label{eq:highexponent}
\end{align}
All these bounds are uniform in $d,t$ and the permitted $N$.

\smallskip
\noindent\textit{Choice of parameters and summation.}
All constants in the counting and geometric estimates, including
$C_{\ref{lem:relations}}$, were fixed using only the support.
Choose $A$ so that
\begin{align*}
A(D-1)/2>M+\log(2q^2)+10.
\end{align*}
Then choose a fixed integer $T\ge1$ such that
$T\log q/2>C(A+1)+M+6$, where $C$ is the constant in
\eqref{eq:highexponent}. For $t>T$, the two remaining positive
terms in that estimate are bounded by $nt\log q/2$ for
$n\ge n_{\ref{prop:far}}(E,M,D)$, uniformly in $t$, because
\begin{align*}
\frac{k(t+2)+(t+2)\log^2(k+1)}{nt}
 \le3\left(\frac{k}{n}+\frac{\log^2(k+1)}n\right)\longrightarrow0.
\end{align*}
It follows that $\mathcal T_{d,t}\le e^{-(M+4)n}$ throughout
this range, after increasing $n_{\ref{prop:far}}(E,M,D)$.

For $t\le T$, the errors in \eqref{eq:lowexponent} are uniform
over $d,t,N$. Indeed, $k\log(n/(2k))=o(n)$, and any constant
depending only on $E,T$ multiplied by $k$ is $o(n)$.
The choice of $A$ therefore gives
$\mathcal T_{d,t}\le e^{-(M+4)n}$ in this range as well.

Finally choose $n_{\ref{prop:far}}(E,M,D)$ so that $2\le k\le n/2$, the localization
and Gram estimates used above hold, and all the preceding
uniform estimates are valid. Summing over the at most $k^2$ pairs $(d,t)$ gives
\begin{align*}
\PP_Z\left(\norm{R_Z}_\infty>\frac1{N\sqrt n}\right)
 \le k^2e^{-(M+4)n}\le e^{-Mn}, \qquad n\ge n_{\ref{prop:far}}(E,M,D).
\end{align*}
All choices are uniform in $N$ and the coefficient intervals.
This proves Proposition~\ref{prop:far}.
\end{proof}

\section{Proof of the main theorem}\label{sec:normals}

We combine Theorem~\ref{thm:averaging} with the inversion of randomness
argument in Section~5 in \cite{Tik} and the structured estimate of
Jain, Sah and Sawhney \cite{JSS}. The scale weighting in
Proposition~\ref{prop:normal} below preserves the polynomial remainder.
Throughout this section, $\xi$ is uniform on $S$, $q=|S|\ge2$, and
$\boldsymbol\xi=(\xi_1,\ldots,\xi_n)$ has independent coordinates
with this law.

For $\delta\in(0,1/4]$ and $\nu\in(0,1/2]$, let
$\Comp(\delta,\nu)$ be the unit vectors within Euclidean distance
$\nu$ of a vector supported on at most $\lfloor\delta n\rfloor$
coordinates, and let $\Incomp(\delta,\nu)$ be its complement in
the unit sphere, as in \cite{RV}.
We need the following consequence of the structured estimate.

\begin{lemma}\label{lem:structured}
There are $\delta_{\ref{lem:structured}}\in(0,1/4]$,
$\nu_{\ref{lem:structured}}\in(0,1/2]$ and
$\eta_{\ref{lem:structured}}>0$ depending only on $S$ such that,
with $\delta=\delta_{\ref{lem:structured}}$ and
$\nu=\nu_{\ref{lem:structured}}$, for
$n\ge n_{\ref{lem:structured}}(S)$ and $0\le t\le1$,
\begin{align}
\PP\left(\inf_{x\in\Comp(\delta,\nu)}\norm{M_nx}_2\le t\right)
 \le \tfrac12a_n(S)+(t+q^{-n})e^{-\eta_{\ref{lem:structured}}n}.
 \label{eq:structured}
\end{align}
\end{lemma}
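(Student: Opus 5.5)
The plan is to derive \eqref{eq:structured} from the structured-vector estimate of Jain, Sah and Sawhney \cite{JSS}, in the form they use for the compressible and almost-constant part of their singularity argument. We do not rebuild it from the Fourier machinery of Section~\ref{sec:averaging}. Their result says that, for suitable $\delta,\nu$ depending on the law, the event that some compressible unit vector is almost annihilated by $M_n$ is dominated by elementary dependencies. These are a zero column, or two equal or opposite columns, that is, a short vector supported on one or two coordinates. Only the column-type half of $a_n(S)$ appears, namely $n\PP(\xi=0)^n+\tfrac12 n(n-1)(q^{-n}+\PP(\xi=-\xi')^n)=\tfrac12a_n(S)$. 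Row dependencies affect $M_n^T$, not $M_nx$ for sparse $x$.

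\textbf{Splitting $\Comp(\delta,\nu)$.} For each $x\in\Comp(\delta,\nu)$, choose a sparse approximant $y$ with $|\operatorname{supp}y|\le\delta n$. Work on the event $\norm{M_n}\le C\sqrt n$, whose complement has probability $e^{-cn}\le q^{-n}e^{-\eta n}$ only if we use boundedness of the entries. Since the entries are bounded, $\norm{M_n}\le Bn$ deterministically. Instead, we absorb the operator norm into the choice of $\nu$, following \cite{RV}: compressible vectors are handled with a net on sparse supports. The cases are as follows.

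(a) \emph{$y$ is close to a vector supported on one or two coordinates with equal or opposite entries.} Here $\norm{M_nx}\le t$ forces, up to error $C\nu\sqrt n$, a column to be small or two columns to be nearly equal or opposite. Since the entries lie in the finite set $S$, a column $v\in S^n$ satisfies $\norm{v}_2\ge\Delta\sqrt{\#\{i:v_i\neq0\}}$. Hence near-vanishing with $t\le1$ forces exact vanishing, except when the column has $O(1)$ nonzero entries. That exceptional event has probability $O(n^C\PP(\xi=0)^{n})\cdot$, with the polynomial factor beaten by conditioning on one more independent column. We must make sure the exact events contribute exactly $\tfrac12a_n(S)$ and that the near-events contribute only $q^{-n}e^{-\eta n}$ more.

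(b) \emph{All other sparse supports.} For these we use the \cite{JSS} bound. Conditioned on a support $I$ of size $2\le|I|\le\delta n$ and an $x_I$ that is not of type (a), the probability that $\norm{M_nx}\le t+\nu\norm{M_n}$ holds is at most $(\rho)^n$, where $\rho<q^{-1}$ strictly. This is the key point. For a row $\xi$, the point probability of $\langle\xi_I,x_I\rangle$ is at most $q^{-1}$, with equality only in the degenerate two-point configurations excluded by (a). When $|\operatorname{supp}|\ge3$, the strict bound $\frac1q(1-\frac{q-1}{q^2})$ follows from the argument in the proof of \eqref{eq:codim}. We then take a union bound over a net of these sparse vectors, of size $\binom{n}{\delta n}(C/\nu)^{\delta n}$. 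This is beaten by $(q\rho)^n$ once $\delta,\nu$ are small in terms of $S$, which gives $q^{-n}e^{-\eta n}$. The additional linear term $te^{-\eta n}$ comes from tensorizing the small ball bound at radius $t$ rather than only at $0$, in the style of \cite{RV}.

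\textbf{Main obstacle.} The hard step is (b) for vectors close to, but not exactly, a two-coordinate equal or opposite pattern, or those with a few large coordinates and a small tail. For these the per-row concentration is only barely below $q^{-1}$ at the relevant radius, and the net entropy must still be controlled. I would follow \cite{JSS}: handle separately the vectors whose two largest coordinates carry almost all the mass, using an exact column comparison plus an independent-coordinate anticoncentration for the tail, and apply the crude union bound only to the rest. Finally, choose $n_{\ref{lem:structured}}(S)$ large so that the polynomial prefactors are absorbed.
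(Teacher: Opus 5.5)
Your first paragraph names the right source, and you correctly identify $\tfrac12a_n(S)$ as the column half of the elementary events. There is still a genuine gap: you never supply the step that makes \cite{JSS} applicable, and the argument you write in its place fails at precision $q^{-n}e^{-\eta n}$. The structured estimate of \cite{JSS} used in the paper concerns almost constant vectors, namely those with at least $(1-\delta_0)n$ coordinates within $\rho_0/\sqrt n$ of a common value. It is not a statement about $\Comp(\delta,\nu)$. The missing observation is an inclusion. Suppose $\norm{x-y}_2\le\nu$ with $|\operatorname{supp}y|\le\delta n$. Each coordinate outside $\operatorname{supp}y$ with $|x_i|>\rho_0/\sqrt n$ contributes more than $\rho_0^2/n$ to $\norm{x-y}_2^2$. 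Hence at most $(\delta+\nu^2/\rho_0^2)n$ coordinates of $x$ exceed $\rho_0/\sqrt n$. Taking $\delta\le\delta_0/2$ and $\nu^2\le\delta_0\rho_0^2/2$ places $\Comp(\delta,\nu)$ inside the almost constant class with common value zero, and the JSS estimate then gives \eqref{eq:structured} at once. That is the paper's entire proof. It needs no nets, no operator norm bounds and no case split, because the delicate vectors near a two-coordinate pattern are handled inside \cite{JSS}.

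The steps you write out instead do not work.
\begin{itemize}
\item \textbf{Operator norm.} If $\EE\xi\ne0$ (e.g.\ $S=\{0,1\}$), then $\norm{M_n}\ge\norm{M_n\one_n}_2/\sqrt n$ is of order $n$. So $\norm{M_n}\le C\sqrt n$ has probability tending to zero, and an error $\nu\norm{M_n}$ cannot be absorbed by a constant $\nu$.
\item \textbf{Case (a).} Even after removing the mean, approximating at mesh $\nu$ costs an error of order $\nu\sqrt n$. You then only learn, e.g., $\norm{C_1-C_2}_2\lesssim\nu\sqrt n$, which permits $\Theta(\nu^2n)$ disagreeing coordinates. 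That event has probability $q^{-n}e^{c(\nu)n}$ with $c(\nu)>0$, exponentially larger than $a_n(S)$. Your deduction of exact vanishing uses the radius $t$ instead of this radius.
\item \textbf{Case (b).} The strict inequality $\rho<q^{-1}$ bounds point masses, but the union bound needs small ball bounds at radius of order $\nu$ per row. For net points near a type (a) pattern these are at least roughly $q^{-1}$. The split between (a) and (b) must therefore use a neighborhood of fixed size, and on that neighborhood case (a) meets the previous problem.
\end{itemize}
You recognize this as the main obstacle and defer it to \cite{JSS}. However, you do not state which estimate you invoke or verify its hypotheses, and that verification is exactly the inclusion above.
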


\begin{proof}
Apply the estimate for almost constant vectors in \cite{JSS},
and denote its parameters by $\delta_0,\rho_0$. If
$x\in\Comp(\delta,\nu)$, at most
$(\delta+\nu^2/\rho_0^2)n$ coordinates have absolute value greater
than $\rho_0/\sqrt n$. Choose
$\delta\le\delta_0/2$ and $\nu^2\le\delta_0\rho_0^2/2$.
Then $x$ belongs to the class in that theorem, with common value zero.
Its elementary contribution is $a_n(S)/2$, giving
\eqref{eq:structured}.
\end{proof}

For $L\ge1$ and a unit vector $x$, define
\begin{align*}
\tau(x)=\sup\{u\in(0,1]:
       \Lc_\xi(\langle\boldsymbol\xi,x\rangle,u)>Lu\}.
\end{align*}
We collect the rounding and geometric inputs into one statement at
each concentration scale. Random rounding appears in the work of
Klartag and Livshyts \cite{KlartagLivshytsRounding}, Livshyts
\cite{LivshytsRounding}, and Livshyts, Tikhomirov and Vershynin
\cite{LTV}. Below we adapt the argument of Lemma~5.3 in \cite{Tik}
to the fixed finite support $S$.

\begin{lemma}\label{lem:roundednet}
Fix $\delta\in(0,1/4]$ and $\nu\in(0,1/2]$.
There are constants $L_{\ref{lem:roundednet}},C_{\ref{lem:roundednet}}\ge1$
depending only on $S,\delta,\nu$ such that, with
$L=L_{\ref{lem:roundednet}}$, for every $M\ge1$, $D>1$ and
$n\ge n_{\ref{lem:roundednet}}(S,\delta,\nu,M,D)$, the following hold.
For every $x\in\Incomp(\delta,\nu)$, we have
$q^{-n}/L\le\tau(x)\le\nu/\sqrt n$ and
\begin{align}
\Lc_\xi(\langle\boldsymbol\xi,x\rangle,u)\le Lu\ (u\ge\tau(x)),\qquad
\Lc_\xi(\langle\boldsymbol\xi,x\rangle,\tau(x))\ge L\tau(x).
\label{eq:tauendpoint}
\end{align}
If $0<T\le\nu/4$ and $N=\lfloor\nu/T\rfloor-1\le q^n(n+1)^{-D}$,
there is a deterministic set $\mathcal N_T\subset\Z^n$ with
\begin{align}
|\mathcal N_T|\le e^{-Mn}(C_{\ref{lem:roundednet}}N)^n,
\qquad
\PP_\xi\left(|\langle\boldsymbol\xi,y\rangle|\le a\right)
 \le C_{\ref{lem:roundednet}}Ta/\sqrt n
 \quad(y\in\mathcal N_T,\ a\ge\sqrt n).
\label{eq:roundednet}
\end{align}
Every $x\in\Incomp(\delta,\nu)$ with $T/2<\tau(x)\le T$ admits
$y\in\mathcal N_T$ such that, for $e=y-\sqrt n\,x/\tau(x)$,
\begin{align}
\norm e_\infty\le1,\qquad
|\langle\one_n,e\rangle|\le C_{\ref{lem:roundednet}}\sqrt n.
\label{eq:roundingerror}
\end{align}
\end{lemma}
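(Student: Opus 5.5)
We will follow the proof of Lemma~5.3 in \cite{Tik}, replacing the Bernoulli averaging step by Theorem~\ref{thm:averaging}. The argument has three parts: the bounds on $\tau$, a random rounding that lands in an admissible product set, and the passage from Theorem~\ref{thm:averaging} to the net $\mathcal N_T$.

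\emph{The bounds on $\tau$.} For $x\in\Incomp(\delta,\nu)$, the standard Rudelson--Vershynin spreading argument gives a set $\sigma$ of at least $c_{\delta,\nu}n$ coordinates with
\begin{align*}
\nu/(2\sqrt n)\le|x_i|\le C_{\delta,\nu}/\sqrt n.
\end{align*}
\begin{enumerate}[label=(\roman*),leftmargin=2em]
\item \emph{Upper bound on $\tau$.} Lemma~\ref{lem:scalar}, applied to the coordinates in $\sigma$, gives $\Lc(\langle\boldsymbol\xi,x\rangle,u)\le C/\sqrt n$ whenever $u\le c\Delta\nu/\sqrt n$. For $u\ge\nu/\sqrt n$ the bound $\Lc\le1\le Lu\sqrt n/\nu$ holds trivially. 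Taking $L$ large covers the range in between, so $\tau(x)\le\nu/\sqrt n$.
\item \emph{Lower bound on $\tau$.} Every atom of $\langle\boldsymbol\xi,x\rangle$ has mass at least $q^{-n}$, so $\Lc(\cdot,u)\ge q^{-n}>Lu$ for $u<q^{-n}/L$. This gives $\tau(x)\ge q^{-n}/L$.
\item \emph{The endpoint relations \eqref{eq:tauendpoint}.} The first holds by definition of the supremum. The second follows from right-continuity of $u\mapsto\Lc(\cdot,u)$, which holds because the concentration function uses closed intervals and the law has finite support.
\end{enumerate}

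\emph{Rounding.} Fix $T$ and $N=\lfloor\nu/T\rfloor-1$, and let $x$ satisfy $T/2<\tau(x)\le T$. Set $w=\sqrt n\,x/\tau(x)$, so that $\norm w_2=\sqrt n/\tau\in[\sqrt n/T,2\sqrt n/T)$. Round each coordinate randomly and independently to $\lfloor w_i\rfloor$ or $\lceil w_i\rceil$ with the mean-preserving probabilities. Hoeffding's inequality then gives $|\langle\one_n,e\rangle|\le C\sqrt n$ with probability at least $1/2$, and $\norm e_\infty\le1$ always holds.

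For most roundings we also need two properties of $y$:
\begin{enumerate}[label=(\alph*),leftmargin=2em]
\item \emph{Admissibility.} After permuting coordinates and flipping signs, $y$ lies in one of at most $e^{o(n)}$ admissible product sets with parameters $(N,n,K,\delta')$. The coordinates in $\sigma$ satisfy $|w_i|\ge\nu\sqrt n/(2\tau)\cdot n^{-1/2}\gtrsim N$, which gives property (iii). The bound $\norm w_2\lesssim\sqrt nN$ controls $\prod_i|A_i|\le(KN)^n$ by AM--GM on dyadic level sets, as in \cite{Tik}, Section~4.
\item \emph{Concentration is preserved.} We need
\begin{align*}
\Lc(\langle\boldsymbol\xi,y\rangle,\sqrt n)\gtrsim\Lc(\langle\boldsymbol\xi,w\rangle,\sqrt n)=\Lc(\langle\boldsymbol\xi,x\rangle,\tau)\ge L\tau\approx L/N.
\end{align*}
Since $\EE_e|\langle\boldsymbol\xi,e\rangle|^2\le Cn$ for each $\boldsymbol\xi$, a Markov argument like the one in the proof of Theorem~\ref{thm:averaging} from Proposition~\ref{prop:smoothing} shows that $\Lc(\langle\boldsymbol\xi,y\rangle,C\sqrt n)\ge L\tau/2$ with probability $\ge3/4$ over the rounding. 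Covering radius $C\sqrt n$ by $C$ intervals of radius $\sqrt n$ converts this back to radius $\sqrt n$.
\end{enumerate}

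\emph{The net.} We choose $L=L_{\ref{lem:roundednet}}$ much larger than $L_{\ref{thm:averaging}}(S,\delta',K)$ times these constants. Define $\mathcal N_T$ as the set of integer vectors in the union of the $e^{o(n)}$ admissible sets that additionally satisfy
\begin{align*}
\Lc(\langle\boldsymbol\xi,y\rangle,\sqrt n)\ge L_{\ref{thm:averaging}}/N
\quad\text{and}\quad
\Lc(\langle\boldsymbol\xi,y\rangle,u)\le C'u/(T\sqrt n)\ \text{for } u\ge\sqrt n.
\end{align*}
Every rounding retained above satisfies both conditions. The second condition follows because $\langle\boldsymbol\xi,y\rangle$ is within $C\sqrt n$ of $\langle\boldsymbol\xi,w\rangle$ with large probability, and $\Lc(\langle\boldsymbol\xi,w\rangle,u)=\Lc(\langle\boldsymbol\xi,x\rangle,u\tau/\sqrt n)\le Lu\tau/\sqrt n$ for $u\ge\sqrt n$. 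To make this rigorous, we define the net condition through the deterministic bound on $y$'s law obtained by averaging over $e$.

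Theorem~\ref{thm:averaging} with exponent $M+1$ applies because $N\le q^n(n+1)^{-D}$. It bounds the first condition by $e^{-(M+1)n}|\mathcal A|\le e^{-(M+1)n}(KN)^n$ per admissible set. The $e^{o(n)}$ sets and the permutations and sign choices are absorbed into $e^{n}$ and $C^n$, which gives the cardinality bound in \eqref{eq:roundednet}.

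\emph{Main obstacle.} The delicate step is the second half of \eqref{eq:roundednet}, the small ball bound at every $a\ge\sqrt n$ for a \emph{deterministic} $y$. This bound must transfer from $x$ through a random rounding whose error is only controlled on average. We would first try to handle it by including the inequality directly in the definition of $\mathcal N_T$, and then show that some rounding satisfies it. For that we would use a union bound over dyadic $a$ between $\sqrt n$ and $\sqrt n\,q^n$, using the Hoeffding tail of $\langle\boldsymbol\xi,e\rangle$ conditionally on $\boldsymbol\xi$. Each dyadic scale then fails with probability $e^{-c a^2/n}$-small mass, which is affordable against the linear-in-$a$ target.
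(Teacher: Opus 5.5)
Your route is the paper's: adapt the random rounding of Lemma~5.3 in \cite{Tik} to the law on $S$, keep the roundings that have concentration at least $L_{\ref{thm:averaging}}/N$ and the linear small ball bound, and count them with Theorem~\ref{thm:averaging}. Several steps fail as written, each with a short repair.

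First, $\tau(x)\le\nu/\sqrt n$ needs $\Lc_\xi(\langle\boldsymbol\xi,x\rangle,u)\le Lu$ for every $u\in(\nu/\sqrt n,1]$. The bound $\Lc\le1$ gives this only for $u\ge1/L$; your inequality $1\le Lu\sqrt n/\nu$ is not the one required. Instead, cover an interval of radius $u$ by $O(1+u\sqrt n/(\Delta\nu))$ intervals of radius $c\Delta\nu/\sqrt n$, and apply Lemma~\ref{lem:scalar} to each. This gives $\Lc\le C(n^{-1/2}+u/\nu)\le 2Cu/\nu$ in that range.

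Second, coordinates with $|x_i|\ge\nu/(2\sqrt n)$ only give $|w_i|\ge\nu/(2T)\ge(N+1)/2$. The rounded coordinates therefore need not avoid $[-N,N]$, as condition~(iii) of Definition~\ref{def:admissible} requires. Use the $\lfloor\delta n\rfloor$ largest coordinates instead. Each exceeds $\nu/\sqrt n$, since otherwise the squared norm outside them would be at most $\nu^2$. Then $|w_i|>\nu/T\ge N+1$, and the original $\delta$ works.

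In the count, only the $C^n$ assignments of coordinates to the roles of the product set are affordable; arbitrary permutations would cost $n!$. No sign flips are needed, because $A_i=-A_i$.

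Third, your first justification of the small ball upper bound does not work. For a fixed rounding, $\PP_\xi(|\langle\boldsymbol\xi,e\rangle|>C\sqrt n)$ is in general of constant order; Hoeffding only gives $e^{-cC^2}$. The target $C'Ta/\sqrt n$, by contrast, may be exponentially small. A bound averaged over $e$ is also not a property of one deterministic $y$.

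Your last paragraph contains the right argument, which is essentially the dyadic shell argument the paper imports from \cite{Tik}. For $a\ge\sqrt n$, consider the $\boldsymbol\xi$ with $|\langle\boldsymbol\xi,w\rangle|\in(2^ja,2^{j+1}a]$ and $|\langle\boldsymbol\xi,y\rangle|\le a$. They have mass at most $2^{j+1}LTa/\sqrt n$, and they force $|\langle\boldsymbol\xi,e\rangle|\ge2^{j-1}a$. The expected excess over $\PP_\xi(|\langle\boldsymbol\xi,w\rangle|\le2a)$ is therefore $O(LTa\,n^{-1/2}e^{-ca^2/n})$ by Hoeffding in $e$. Alternatively it is $O(LT\sqrt n/a)$ from the variance bound alone, as in the paper. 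Markov's inequality and a union bound over $a=2^i\sqrt n$ then produce one rounding that satisfies all scales simultaneously, together with the sum bound and the concentration lower bound.

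Define $\mathcal N_T$ through the centered probability $\PP_\xi(|\langle\boldsymbol\xi,y\rangle|\le a)\le C'Ta/\sqrt n$, as in \eqref{eq:roundednet}. Your stated condition has $T$ in the denominator, which makes it vacuous. It also uses $\Lc$, but the dyadic argument controls one center at a time and does not give the supremum over all centers. With these changes your proof coincides with the paper's.
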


\begin{proof}
As in Lemma~5.1 in \cite{Tik}, the $\lfloor\delta n\rfloor$
largest coordinates of an incompressible vector have absolute value
greater than $\nu/\sqrt n$. Lemma~\ref{lem:scalar} and interval
covering therefore give
$\Lc_\xi(\langle\boldsymbol\xi,x\rangle,u)\le C(n^{-1/2}+u)$.
Taking $L$ large enough yields the upper bound on $\tau$.
An atom of mass at least $q^{-n}$ gives the lower bound.
The two endpoint estimates follow from the definition, monotonicity
and right continuity of the concentration function.

Set $y^0=\sqrt n\,x/\tau(x)$. Lemma~5.3 in \cite{Tik} extends
to the present coefficients with constants depending only on $S$.
Indeed, its independent unbiased rounding errors satisfy
\begin{align*}
|e_i|\le1,\qquad
\EE_e|\langle b,e\rangle|^2\le R^2n/4\ (b\in S^n),\qquad
\EE_e|\langle\one_n,e\rangle|^2\le n/4,
\quad R=\max_{b\in S}|b|.
\end{align*}
These replace the variance bounds for Bernoulli configurations in
the cited proof, with its configuration weights replaced by the
product law on $S^n$. The dyadic shell argument for the upper bound
and the mass retention argument for the lower bound then give a
rounding satisfying \eqref{eq:roundingerror} and
\begin{align*}
\PP_\xi\left(|\langle\boldsymbol\xi,y\rangle|\le a\right)
 \le C_S LTa/\sqrt n\quad(a\ge\sqrt n),\qquad
\Lc_\xi(\langle\boldsymbol\xi,y\rangle,\sqrt n)
 \ge c_S L\tau(x)>c_S LT/2.
\end{align*}
The sum error is controlled by the last variance bound, exactly as
in that lemma.

By Corollary~5.5 and Lemma~5.6 in \cite{Tik}, these rounded vectors
lie in at most $C_0^n$ coordinate permutations of one product set
admissible with parameters $(N,n,K,\delta)$, where
$K=K(\delta,\nu)$ and $C_0$ is universal.
The geometric construction in the proof of Lemma~5.4 there uses
only incompressibility and the error bound
$\norm{y-\sqrt n\,x/s}_\infty\le1$, for $T/2<s\le T$,
so it applies independently of the coefficient law.
Also $\nu/2\le NT\le\nu$.
Write $L_{\mathrm{av}}=L_{\ref{thm:averaging}}(S,\delta,K)$,
and choose $L\ge4L_{\mathrm{av}}/(c_S\nu)$, in addition to the
earlier lower bound. Then the rounded vector has concentration
at least $L_{\mathrm{av}}/N$.
Let $\mathcal N_T$ consist of the vectors in these permuted sets
with this concentration and the preceding small ball upper bound.
Theorem~\ref{thm:averaging} gives
$|\mathcal N_T|\le C_0^ne^{-Mn}(KN)^n$.
Enlarge $C_{\ref{lem:roundednet}}$ to include $C_0K$, $C_SL$ and
the sum error constant. All choices precede $M,D$.
\end{proof}

\begin{proposition}\label{prop:normal}
Fix $\delta\in(0,1/4]$ and $\nu\in(0,1/2]$, and use the threshold
$\tau$ from Lemma~\ref{lem:roundednet}. Write $C_j=\operatorname{col}_j(M_n)$,
and choose a unit normal $Y_j$ to the other columns, measurably with
respect to those columns.
There is $C_{\ref{prop:normal}}>0$ depending only on $S,\delta,\nu$
such that for every $D>1$ and $B>0$ there is
$C'_{\ref{prop:normal}}>0$ depending also on $D,B$ for which, for
$n\ge n_{\ref{prop:normal}}(S,\delta,\nu,D,B)$ and $h\ge0$,
\begin{align*}
\EE\left[\tau(Y_j)\one_{\{Y_j\in\Incomp(\delta,\nu)\}}\right]
 &\le C'_{\ref{prop:normal}}(n+1)^Dq^{-n}+e^{-Bn},
\end{align*}
\begin{align}
\PP\left(Y_j\in\Incomp(\delta,\nu),\
             |\langle C_j,Y_j\rangle|\le h\right)
 &\le C_{\ref{prop:normal}}h+C'_{\ref{prop:normal}}(n+1)^Dq^{-n}
          +C_{\ref{prop:normal}}e^{-Bn}.
\label{eq:normal}
\end{align}
\end{proposition}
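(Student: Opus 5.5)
The plan is to prove the expectation bound by a dyadic decomposition in the scale $T$, using the rounded nets of Lemma~\ref{lem:roundednet}. Then the small ball bound \eqref{eq:normal} follows by conditioning on the other columns.

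Fix $M$ large, to be chosen in terms of $B$ and $D$. By Lemma~\ref{lem:roundednet}, $\tau(Y_j)\in[q^{-n}/L,\nu/\sqrt n]$ whenever $Y_j$ is incompressible. Split into dyadic shells $T_\ell/2<\tau\le T_\ell$ with $T_\ell=2^{-\ell}\nu/4$. Consider first the shells where $N_\ell=\lfloor\nu/T_\ell\rfloor-1>q^n(n+1)^{-D}$. There $T_\ell\le C(n+1)^Dq^{-n}$, so these shells contribute at most $C(n+1)^Dq^{-n}$ to $\EE[\tau\one]$, because $\tau\le T_\ell$ and the shells are disjoint events. The shells with $T_\ell>\nu/4$ are handled the same way: there $\tau\le\nu/\sqrt n$ trivially, and I would fold them into the first admissible shell.

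For the remaining shells, Lemma~\ref{lem:roundednet} gives $y\in\mathcal N_{T_\ell}$ with $y=\sqrt n Y_j/\tau+e$. Since $Y_j$ is orthogonal to every column $C_i$ with $i\ne j$, we get
\begin{align*}
|\langle C_i,y\rangle|=|\langle C_i,e\rangle|\le C\sqrt n
\end{align*}
with probability bounded away from one loss. Here I would use \eqref{eq:roundingerror}: write $C_i$ as a translate of a centered vector plus a multiple of $\one_n$, so the $\one_n$-component is handled by $|\langle\one_n,e\rangle|\le C\sqrt n$. The centered part is then controlled after an additional random-sign argument, or more simply by a bound valid for the deterministic $e$ with $\norm e_\infty\le1$ in a typical-column sense. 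Concretely, I would follow Tikhomirov's argument: for a fixed $y$, the probability that at least $n-1-\lfloor n/2\rfloor$ of the columns $C_i$, $i\ne j$, satisfy $|\langle C_i,y\rangle|\le C'\sqrt n$ is at most a product of the individual bounds from \eqref{eq:roundednet}, namely $(C T_\ell)^{cn}$. The Markov/variance control on $\langle C_i,e\rangle$ shows that a majority of columns satisfy the small inner product bound. A union bound over $\mathcal N_{T_\ell}$ then gives
\begin{align*}
\PP(\text{shell }\ell)\le e^{-Mn}(CN_\ell)^n(CT_\ell)^{(1-o(1))n}\cdot 2^n.
\end{align*}
This is the main obstacle. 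The exponent must be essentially $n-1$, not $n/2$, for $N^nT^{n}=O(1)^n$ to cancel. So I would require all $n-1$ columns to satisfy the bound, using the fact that the error $\langle C_i,e\rangle$ is small for all $i$ simultaneously. This follows from conditioning: for fixed $e$, each $\langle C_i,e\rangle$ is subgaussian with variance $O(n)$, and I would absorb the failures via a threshold $a=K\sqrt n$ with $K$ large, together with \eqref{eq:roundednet} at this $a$. The $e^{-Mn}$ then beats the $C^n$ and the loss $T^{-1}$ from the missing column, giving a shell probability of at most $e^{-(M-C)n}T_\ell^{-1}$. Multiplying by $T_\ell$ and summing over the $O(n)$ shells gives at most $e^{-Bn}$ once $M\ge B+C$.

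For \eqref{eq:normal}, condition on the other columns, which fixes $Y_j$. By \eqref{eq:tauendpoint} with $u=\max(h,\tau(Y_j))$, we get
\begin{align*}
\PP(|\langle C_j,Y_j\rangle|\le h\mid Y_j)\le L h+L\tau(Y_j).
\end{align*}
Taking expectations and applying the first bound finishes the proof, with $C_{\ref{prop:normal}}=L$.
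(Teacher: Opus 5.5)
\textbf{Verdict: there is a genuine gap.} It sits at the step you flag as the main obstacle, and your proposed fix for it does not work.

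What is right: the overall architecture matches the paper. This includes the conditional reduction of \eqref{eq:normal} to the bound on $\EE[\tau(Y_j)\one_{\{Y_j\in\Incomp\}}]$, the dyadic levels in $\tau$, and discarding the levels with $N>q^n(n+1)^{-D}$ at total cost $O((n+1)^Dq^{-n})$. It also includes the union bound over $\mathcal N_T$ and the cancellation $N^nT^{n-1}\cdot T=(NT)^n$, which, as you correctly note, needs all $n-1$ other columns.

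What fails: you want $|\langle C_i,e\rangle|\le K\sqrt n$ for every $i\ne j$ simultaneously, arguing that for fixed $e$ each $\langle C_i,e\rangle$ is subgaussian.
\begin{itemize}
\item The vector $e=y-\sqrt n\,Y_j/\tau(Y_j)$ is determined by the columns $C_i$, $i\ne j$, themselves, so there is no fixed $e$ independent of them.
\item No bound uniform over admissible $e$ holds column by column at scale $\sqrt n$. For instance, $e$ a small multiple of $C_i-\mu\one_n$ gives an inner product of order $n$.
\item What can be controlled uniformly is only an aggregate quantity. From it, a single threshold $K\sqrt n$ is guaranteed only for all but roughly $(C/K)^2n$ columns. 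Dropping a linear number of columns costs about $T^{-cn}\approx N^{cn}$, which is fatal when $N$ is close to $q^n$.
\item If you instead use the randomness of the rounding, a union bound over $n-1$ columns forces $K\asymp\sqrt{\log n}$. Then \eqref{eq:roundednet} at $a=K\sqrt n$ loses $(\log n)^{(n-1)/2}$, which no fixed $e^{-Mn}$ absorbs.
\end{itemize}

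The paper's argument supplies two ingredients your proposal lacks.

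First, it uses the operator norm of the centred matrix. On the event $\mathcal G_j=\{\norm{M_{-j}-\mu\one_n\one_{n-1}^T}\le K\sqrt n\}$, whose complement has probability at most $e^{-(B+3)n}$ by Lemma~3.4 in \cite{Tik}, one has, uniformly in the rounding,
\begin{align*}
\norm{M_{-j}^Ty}_2\le K\sqrt n\,\norm{e}_2+|\mu|\sqrt{n-1}\,|\langle\one_n,e\rangle|\le C_Bn .
\end{align*}
This uses $M_{-j}^Ty^0=0$ and \eqref{eq:roundingerror}.

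Second, for each fixed $y\in\mathcal N_T$ the $n-1$ coordinates of $M_{-j}^Ty$ are independent. The small ball bound in \eqref{eq:roundednet} holds for \emph{all} $a\ge\sqrt n$, not just one threshold, and it is tensorized via part~(i) of Lemma~3.2 in \cite{Tik}. This gives
\begin{align*}
\PP\left(\norm{M_{-j}^Ty}_2\le C_Bn\right)\le(C_BT)^{n-1}.
\end{align*}
The bound tolerates individual inner products as large as order $n$ while keeping the full exponent $n-1$.

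Consequently, the $e^{-Bn}$ term in the statement comes from $\PP(\mathcal G_j^c)$ together with $\tau\le1$, and not only from summing over the levels.
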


\begin{proof}
Write $Y=Y_j$, let $M_{-j}$ be the matrix with column $j$ deleted,
and set $\mathcal F_j=\sigma(C_i:i\ne j)$.
The finite support permits a measurable choice of $Y$ even if the
other columns are linearly dependent. Conditional on $\mathcal F_j$,
$C_j$ retains its product law. Thus, on $\{Y\in\Incomp\}$,
\begin{align*}
\PP\left(|\langle C_j,Y\rangle|\le h\mid\mathcal F_j\right)
 \le L\max\{h,\tau(Y)\}\le Lh+L\tau(Y),
\end{align*}
where $L=L_{\ref{lem:roundednet}}$.
It suffices to prove the expectation bound.

Set $\mu=\EE\xi$ and $Z_{-j}=M_{-j}-\mu\one_n\one_{n-1}^T$.
Lemma~3.4 in \cite{Tik}, applied to the centered square matrix,
gives $K=K(S,B)$ such that
\begin{align}
\PP(\mathcal G_j^c)\le e^{-(B+3)n},\qquad
\mathcal G_j=\{\norm{Z_{-j}}\le K\sqrt n\}.
\label{eq:centrednorm}
\end{align}
Set $N_* =\lfloor q^n(n+1)^{-D}\rfloor$, $T_*=\nu/(N_*+1)$,
and $T_\ell=2^{-\ell}\nu/\sqrt n$.
For $n\ge n_{\ref{prop:normal}}(S,\delta,\nu,D,B)$, we have
$N_*\ge1$ and $T_0\le\nu/4$. The scales from $T_0$ down to the last $T_\ell\ge T_*$ give $O_q(n)$ levels
$\mathcal D_T=\{x\in\Incomp:T/2<\tau(x)\le T\}$.
They cover the incompressible event apart from scales below $T_*$,
whose contribution to the expectation is at most
$T_*<\nu(n+1)^Dq^{-n}$.
Each associated $N=\lfloor\nu/T\rfloor-1$ satisfies
$1\le N\le N_*$ and $\nu/2\le NT\le\nu$.

Fix one such level and take $\mathcal N_T$ from
Lemma~\ref{lem:roundednet}, with exponent $M$ to be chosen below.
On $\{Y=x\in\mathcal D_T\}\cap\mathcal G_j$, let
$y^0=\sqrt n\,x/\tau(x)$ and take its rounding $y\in\mathcal N_T$.
Since $M_{-j}^Ty^0=0$, \eqref{eq:roundingerror} yields
\begin{align*}
\norm{M_{-j}^Ty}_2
 \le K\sqrt n\,\norm{y-y^0}_2
    +|\mu|\sqrt{n-1}\,|\langle\one_n,y-y^0\rangle|
 \le C_B n.
\end{align*}
For fixed $y\in\mathcal N_T$, the coordinates of $M_{-j}^Ty$
are independent. Tensorizing \eqref{eq:roundednet} using part~(i)
of Lemma~3.2 in \cite{Tik} at radius $C_B n$ gives
\begin{align*}
\PP\left(\norm{M_{-j}^Ty}_2\le C_B n\right)
 \le (C_BT)^{n-1}.
\end{align*}
This estimate is applied without conditioning on $\mathcal G_j$.
The union bound and \eqref{eq:roundednet} therefore imply
\begin{align*}
\PP(Y\in\mathcal D_T,\mathcal G_j)
 &\le e^{-Mn}(C_{\ref{lem:roundednet}}N)^n(C_BT)^{n-1}
 \le T^{-1}e^{-(M-C_B')n},
\end{align*}
where $C_B'$ depends only on $S,\delta,\nu,B$.
Weighting by $\tau(Y)\le T$ removes the factor $T^{-1}$:
\begin{align*}
\EE\left[\tau(Y)\one_{\{Y\in\mathcal D_T\}\cap\mathcal G_j}\right]
 \le e^{-(M-C_B')n}.
\end{align*}
Equivalently, the powers of the scale cancel as
$N^nT^{n-1}T=(NT)^n$.
Choose $M>C_B'+B+5$, and then choose
$n_{\ref{prop:normal}}$ to include the threshold in
Lemma~\ref{lem:roundednet} and the preceding scale restrictions.
Summing over the $O_q(n)$ levels and using $\tau\le1$ on
$\mathcal G_j^c$ proves the expectation bound.
Averaging the conditional estimate proves \eqref{eq:normal};
its coefficient of $h$ is independent of $D,B$.
\end{proof}

\begin{proof}[Proof of Theorem~\ref{thm:main}]
Fix $\delta,\nu$ from Lemma~\ref{lem:structured}, take
$D=1+\varepsilon$ and $B>\log q+1$, and assume
$n\ge\max\{2/\delta,n_{\ref{lem:structured}}(S),
n_{\ref{prop:normal}}(S,\delta,\nu,D,B)\}$.
For $0\le z\le\nu$, let
\begin{align*}
\mathcal C_z=
 \left\{\inf_{x\in\Comp(\delta,\nu)}\norm{M_nx}_2\le z/\sqrt n\right\}
 \cup
 \left\{\inf_{y\in\Comp(\delta,\nu)}\norm{M_n^Ty}_2\le z/\nu\right\}.
\end{align*}
Applying \eqref{eq:structured} to the matrix and its transpose gives
\begin{align*}
\PP(\mathcal C_z)
 \le a_n(S)+(z/\sqrt n+z/\nu+2q^{-n})e^{-\eta_{\ref{lem:structured}}n}.
\end{align*}
On $\{s_n(M_n)\le z/\sqrt n\}\cap\mathcal C_z^c$, a minimizing
unit vector $x$ is incompressible. Each of its
$m=\lfloor\delta n\rfloor$ largest coordinates has absolute value
greater than $\nu/\sqrt n$, since the squared norm outside them
exceeds $\nu^2$.
For each corresponding index $j$, orthogonality gives
\begin{align*}
|x_j|\,|\langle C_j,Y_j\rangle|
 =|\langle M_nx,Y_j\rangle|\le z/\sqrt n.
\end{align*}
Hence $|\langle C_j,Y_j\rangle|\le z/\nu$ and
$\norm{M_n^TY_j}_2\le z/\nu$; exclusion of $\mathcal C_z$
forces $Y_j\in\Incomp$.
The resulting pointwise column average is
\begin{align*}
\one_{\{s_n(M_n)\le z/\sqrt n\}\cap\mathcal C_z^c}
 \le\frac1m\sum_{j=1}^n
 \one_{\{Y_j\in\Incomp,\ |\langle C_j,Y_j\rangle|\le z/\nu\}}.
\end{align*}
Since $m\ge\delta n/2$, Proposition~\ref{prop:normal} bounds its
expectation by
$(2/\delta)(Cz/\nu+C'(n+1)^Dq^{-n}+Ce^{-Bn})$.
Adding the structured contribution, and absorbing
$e^{-Bn}+q^{-n}e^{-\eta_{\ref{lem:structured}}n}$ into
$C'n^{1+\varepsilon}q^{-n}$, proves \eqref{eq:main} in this range.
The coefficient of $z$ depends only on $S$, since $\delta,\nu,L$
were fixed before $D,B$.
Increase $C$ to cover $z>\nu$ and $C'$ to cover the finitely many
remaining dimensions.
\end{proof}

To deduce Corollary~\ref{cor:singularity}, we also need to compare
the elementary union probability with the sum in \eqref{eq:an}.

\begin{lemma}\label{lem:elementary}
For every $n\ge1$,
\begin{align*}
0\le a_n(S)-\PP(\mathcal E_n)\le C_{\ref{lem:elementary}}n^4q^{-2n},
\end{align*}
where $C_{\ref{lem:elementary}}$ depends only on $S$.
\end{lemma}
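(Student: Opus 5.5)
We will prove Lemma~\ref{lem:elementary} by Bonferroni's inequalities applied to the family of elementary events. Let $\mathcal F$ consist of the events $Z_i^{\mathrm{row}}$, $Z_i^{\mathrm{col}}$ (row or column $i$ is zero; only when $0\in S$), $R^{\pm}_{ij}$ (rows $i<j$ equal, respectively opposite) and $K^{\pm}_{ij}$ (the same for columns). By independence of entries, $\PP(Z_i)=\PP(\xi=0)^n$, $\PP(R^+_{ij})=\PP(\xi=\xi')^n=q^{-n}$ and $\PP(R^-_{ij})=\PP(\xi=-\xi')^n$. Counting $n$ rows, $n$ columns and $\binom n2$ pairs of each type gives $\sum_{F\in\mathcal F}\PP(F)=a_n(S)$ exactly. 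The union bound then gives $\PP(\mathcal E_n)\le a_n(S)$, which is the left inequality.

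For the right inequality we use the second Bonferroni inequality:
\begin{align*}
a_n(S)-\PP(\mathcal E_n)\le\sum_{\{F,F'\}\subset\mathcal F,\ F\ne F'}\PP(F\cap F').
\end{align*}
There are $O(n^4)$ pairs. It suffices to show $\PP(F\cap F')\le Cq^{-2n}$ for each pair with $C=C(S)$. Every single event has probability at most $q^{-n}$: for $Z_i$ use $\PP(\xi=0)\le q^{-1}$, and for opposite pairs use $\PP(\xi=-\xi')=q^{-2}|S\cap(-S)|\le q^{-1}$.

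We split into cases according to the geometry of the two events.

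\textbf{Two row events on disjoint sets of rows.} These are independent, so the bound $q^{-2n}$ is immediate.

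\textbf{Two row events sharing an index.} Examples are $R^+_{ij}$ with $R^\pm_{ik}$, $R^+_{ij}$ with $R^-_{ij}$, or $Z_i$ with $R_{ij}$. We condition entrywise column by column. Each column contributes an independent factor equal to the probability that a triple or pair of independent uniform entries satisfies two independent linear constraints, such as $a=b=c$ or $a=b=-b$. Each such factor is at most $q^{-2}$. For example, $\PP(a=b,\ a=-c)=q^{-3}|S\cap(-S)|\le q^{-2}$. The degenerate combination $R^+_{ij}\cap R^-_{ij}$ forces both rows to vanish, which has probability at most $q^{-2n}$.

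\textbf{One row event and one column event.} Here the events involve at most two rows and two columns. We discard the at most four entries where they meet; this changes probabilities by at most a factor $q^4$. We also discard the entries of the intersecting block of the row event. What remains are $n-2$ columns in which the row constraint holds independently, and $n-2$ rows, outside those two, in which the column constraint holds independently. These disjoint entry sets give a bound of order $(q^{-1})^{n-2}(q^{-1})^{n-2}=q^4q^{-2n}$.

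The main care lies in this mixed case, in organizing the disjoint sets of entries so that the two events become independent up to an $O(1)$ number of entries. The remaining cases are routine per-coordinate computations. Summing over the $O(n^4)$ pairs yields the bound $C_{\ref{lem:elementary}}n^4q^{-2n}$.
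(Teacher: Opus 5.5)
Your proof is correct and follows the paper's route. The lower inequality comes from the union bound. The upper inequality comes from the second Bonferroni inequality over $O(n^4)$ pairs, each of probability $O(q^{-2n})$; the case of two column events follows from yours by transposition.

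The only difference is how the pairwise intersections are bounded. The paper treats them all at once: the combined linear system on the $n^2$ entries has rank $2n$ for two row or two column constraints, and rank $2n-1$ for a row and a column constraint, since $(\spn(a)\otimes\R^n)\cap(\R^n\otimes\spn(b))=\spn(a\otimes b)$. Applying \eqref{eq:projectioncount} then gives $q^{1-2n}$. Your case analysis, using independence over disjoint sets of entries, gives the slightly weaker but sufficient $q^{4-2n}$.
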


\begin{proof}
The row constraints have pairwise nonproportional vectors
$e_i,e_i\pm e_j$ ($i<j$). Two distinct row constraints therefore
have rank $2n$ on the entry array, as do two column constraints.
One row constraint $a$ and one column constraint $b$ have rank
$2n-1$, since
$(\spn(a)\otimes\R^n)\cap(\R^n\otimes\spn(b))
=\spn(a\otimes b)$.
By \eqref{eq:projectioncount} with $F=S$, a system of rank $h$
holds for at most $q^{n^2-h}$ of the $q^{n^2}$ entry arrays.
Each pairwise intersection thus has probability at most $q^{1-2n}$.
There are $O(n^4)$ pairs, and the individual probabilities sum
to $a_n(S)$. The union bound and the first Bonferroni lower bound
give the result, also when $n=1$ and a row and column event coincide.
\end{proof}

\section*{Acknowledgments}
The authors thank Professor Hanchao Wang for his guidance and helpful
suggestions.

\par\medskip
\noindent\textbf{Statement on AI use.}
The authors used ChatGPT with the GPT-6 Astra model in the work described
below. Before this use, the authors developed a Fourier argument based on
a moment of fixed order and a precursor of the present set $V(\theta)$. This
argument estimated the contribution of approximate relations directly and
already gave an improvement of the form $2^{-n}\exp(o(\sqrt{\frac{n}{\log n}}))$ for random sign matrices.

Seeking a sharp asymptotic, the authors considered arithmetic and geometric
ways to control the possible relation sets and their associated frequency
regions, and asked ChatGPT to explore these directions. In the ensuing
discussions, the authors and ChatGPT jointly developed the classification
by dimension and density, the use of a moment order that grows with $n$,
and separate estimates for bounded and large density loss. This method,
including the weighted covering for large loss in the proof of
Proposition~\ref{prop:far}, yielded the sharp Bernoulli asymptotic.

The authors then asked whether the argument could be extended to arbitrary
finite real supports. ChatGPT proposed the approach that removes the
algebraicity restriction. This extension is explained in Section~\ref{sec:realsupports} and
carried out through the estimates of Section~\ref{sec:count} and their use in Section~\ref{sec:averaging},
particularly Lemma~\ref{lem:relations} and the proof of Proposition~\ref{prop:far}. The authors
checked, organized, and rewrote the final proofs, used ChatGPT for language
editing, and take full responsibility for the paper.

\appendix
\section{Determinant bounds for algebraic supports}\label{sec:algebraic}

The change of variables for far frequencies depends on a lower bound for
a determinant with entries in a fixed finite set. When that set is
algebraic, a field norm supplies a bound uniform in the dimension.
We isolate this arithmetic input and compare it with the geometric
estimate \eqref{eq:realjacobian} used in the general proof.

\begin{lemma}\label{lem:fieldnorm}
For every fixed finite set $F$ of real algebraic numbers there is
$C_{\ref{lem:fieldnorm}}>0$ depending only on $F$ such that every invertible
$m\times m$ matrix $B$ with entries in $F$ satisfies
\begin{align*}
|\det B|\ge\exp\{-C_{\ref{lem:fieldnorm}}m\log(m+1)\}\qquad(m\ge1).
\end{align*}
\end{lemma}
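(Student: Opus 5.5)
The plan is to reduce the statement to an integrality argument in a fixed number field. Let $K=\mathbb Q(F)$, a finite extension of $\mathbb Q$ of degree $s$, with real embedding fixed so that $F\subset K\subset\R$. Choose a positive integer $D_0$, depending only on $F$, such that $D_0\alpha$ is an algebraic integer for every $\alpha\in F$; this is possible since $F$ is finite. Then $D_0B$ has entries in the ring of integers $\mathcal O_K$, and so $\det(D_0B)=D_0^m\det B$ is a nonzero algebraic integer of $K$.

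Next we use the field norm. Let $\sigma_1,\ldots,\sigma_s$ be the embeddings $K\to\mathbb C$, with $\sigma_1$ the given real embedding. For the nonzero algebraic integer $\gamma=D_0^m\det B$, the norm $N_{K/\mathbb Q}(\gamma)=\prod_{i=1}^s\sigma_i(\gamma)$ is a nonzero rational integer, hence
\begin{align*}
1\le\prod_{i=1}^s|\sigma_i(\gamma)|.
\end{align*}
Each $\sigma_i(\gamma)=D_0^m\det(\sigma_i(B))$, where $\sigma_i(B)$ is the matrix obtained by applying $\sigma_i$ entrywise; its entries lie in the finite set $\sigma_i(F)$, so they are bounded in modulus by some $H=H(F)$. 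Hadamard's inequality then gives $|\det\sigma_i(B)|\le(H\sqrt m)^m$ for $i\ge2$. Combining these bounds yields
\begin{align*}
|\det B|\ge D_0^{-sm}(H\sqrt m)^{-(s-1)m}\ge\exp\{-Cm\log(m+1)\}
\end{align*}
with $C$ depending only on $F$ through $s,D_0,H$. The case where $K=\mathbb Q$, that is $s=1$, is included, since then the bound is simply $|\det B|\ge D_0^{-m}$.

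No step here is a real obstacle. The only point needing care is that the constants $D_0$, $s$ and $H$ depend on $F$ alone and not on $m$, and this holds because $F$ is fixed and finite. One also has to remember that the conjugate matrices $\sigma_i(B)$ may be complex, but Hadamard's inequality holds for complex matrices, so the argument goes through unchanged.
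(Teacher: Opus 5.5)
Your proposal is correct and follows essentially the same argument as the paper: clear denominators so that $D_0^m\det B$ is a nonzero algebraic integer of $K=\mathbb Q(F)$, use that its field norm is a nonzero rational integer, bound the non-identity conjugates by Hadamard's inequality, and separate the identity embedding. The paper additionally takes the entry bound to be at least $1$ for convenience in the final logarithmic bound, but your version goes through without this.
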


\begin{proof}
The field $K=\mathbb Q(F)$ is a number field of degree $e$.
Choose a positive integer $H$ such that $Ha$ is an algebraic integer
for every $a\in F$. Then $\alpha=H^m\det B$ is a nonzero algebraic
integer in $K$. Its field norm is a nonzero integer, so
\begin{align*}
1\le |N_{K/\mathbb Q}(\alpha)|
     =\prod_{\sigma:K\hookrightarrow\mathbb C}|\sigma(\alpha)|.
\end{align*}
Set $B_0=\max(1,|\sigma(a)|:a\in F,\ \sigma:K\hookrightarrow\mathbb C)$.
For each embedding, Hadamard's inequality gives
\begin{align*}
|\sigma(\alpha)|
 =|\det(H\sigma(B))|\le(HB_0\sqrt m)^m.
\end{align*}
Separate the identity embedding in the norm product. It follows that
\begin{align*}
|\det B|=H^{-m}|\alpha|
 \ge H^{-m}(HB_0\sqrt m)^{-m(e-1)}.
\end{align*}
Taking logarithms proves the result, with a constant depending only
on $F$ and with $m=1$ covered by the same choice.
\end{proof}

Suppose now that $E$ is algebraic. Translating by an element of $E$
preserves this property, so retain $0\in E$ as in the proof of
Theorem~\ref{thm:averaging}. For the relation set
$V=\{v\in E^k:|\langle v,\theta\rangle|\le R/N\}$ used to estimate
\eqref{eq:normalizedmoment}, take $U=\spn V$, and write
$r=\dim U$ and $d=k-r$. Because all constant configurations lie
in $V$, there is a basis
$(\tilde{a}\one_k,v_2,\ldots,v_r)$ of $U$ with columns in $V$,
where $\tilde{a}\in E\setminus\{0\}$.

The $r-1$ functionals $\langle v_j,\theta\rangle$, $2\le j\le r$, are
independent on $H_k$: a linear combination vanishing there is a
multiple of $\one_k$, which contradicts independence of the basis.
In the chart $\theta_k=-\sum_{i<k}\theta_i$, their coefficient
rows are $(v_{j,i}-v_{j,k})_{i<k}$. Complete these rows to an
invertible $(k-1)\times(k-1)$ matrix $D$ by adjoining $d$ of the
coordinate rows. All entries of $D$ belong to the fixed finite
algebraic set
\begin{align*}
F=(E-E)\cup\{0,1,-1\}.
\end{align*}
\Needspace{5\baselineskip}
Thus the map
\begin{align*}
\theta\longmapsto
 (\langle v_2,\theta\rangle,\ldots,\langle v_r,\theta\rangle,
                  \theta_{j_1},\ldots,\theta_{j_d})
\end{align*}
has inverse Jacobian, for the measure defined by eliminating one coordinate on $H_k$,
bounded by
\begin{align*}
|\det D|^{-1}\le\exp\{Ck\log(k+1)\}.
\end{align*}
The differences in $E-E$ come from eliminating $\theta_k$;
the adjoined coordinate rows use only $0$ and $1$.
For $r=1$, there are no relation rows and $D$ is the identity.

On the slab set $|\langle v_j,\theta\rangle|\le R/N$ for $2\le j\le r$,
the first $r-1$ transformed variables range over $[-R/N,R/N]$.
Retain the Gaussian factors at the $d$ adjoined coordinates and
bound the others by one. This Jacobian bound gives
\begin{align*}
\int_{\substack{\theta\in H_k\\
          |\langle v_j,\theta\rangle|\le R/N\ (2\le j\le r)}}
       \prod_{i=1}^k\omega_n(\theta_i)\,d\theta
 \le e^{Ck\log(k+1)}(2R/N)^{r-1}(C/\sqrt n)^d.
\end{align*}
For $k=An/\log(n+1)+O(1)$ with fixed $A$, the cost of the inverse Jacobian is
$\exp\{O_E(An)\}$. Algebraic bases can still be poorly conditioned;
the field norm gives a quantitative bound of a size the exponential
estimates can accommodate.

For arbitrary real $E$, Section~\ref{sec:averaging} obtains the
corresponding control by selecting a dimension and frame through
maximal scaled volume, counting their approximate interpolation
labels, and averaging over the bases in Lemma~\ref{lem:densebases}.
Only $g\le C(t+2)$ completion directions then contribute the small
scale $\eta$, giving \eqref{eq:realjacobian}. The arithmetic
calculation above controls this change of variables for an exact
basis. Counting the possible relation configurations and their bases
is a separate part of the Fourier estimate; the full small ball
proof is the one given in the main text.

\printbibliography
\end{document}